    \crefname{const}{Construction}{Constructions}
    \crefname{subsection}{Subsection}{Subsections}   
    \crefname{subthm}{Theorem}{Theorems}   
    \crefname{sublem}{Lemma}{Lemmas}   
    \crefname{subprop}{Proposition}{Propositions}   
    \crefname{subcor}{Corollary}{Corollaries}   
\definecolor{andresblue}{rgb}{0,0.72,0.92}
\definecolor{andrespink}{rgb}{1,0,1}
\definecolor{munsell}{rgb}{0.0, 0.5, 0.69}
\newcommand{\TM}[1]{\todo[size=\footnotesize,inline,color=orange!50]{\sf #1 \mbox{} \hfill --- Tyrrell}\noindent}
\newtheorem{theorem}{Theorem}[section]
\newtheorem{proposition}[theorem]{Proposition}
\newtheorem{lemma}[theorem]{Lemma}
\newtheorem{corollary}[theorem]{Corollary}
\theoremstyle{definition}
\newtheorem{remark}[theorem]{Remark}
\newtheorem{definition}[theorem]{Definition}
\newtheorem{example}[theorem]{Example}
\newtheorem{question}[theorem]{Question}
\theoremstyle{remark}
\crefname{example}{Example}{Examples}
\crefname{theorem}{Theorem}{Theorems}
\crefname{lem}{Lemma}{Lemmas}
\crefname{prop}{Proposition}{Propositions}
\crefname{figure}{Figure}{Figures}
\crefname{fig}{Figure}{Figures}
\crefname{remark}{Remark}{Remarks}
\crefname{rem}{Remark}{Remarks}
\crefname{cor}{Corollary}{Corollaries}
\crefname{corollary}{Corollary}{Corollaries}
\crefname{conjecture}{Conjecture}{Conjectures}
\crefname{conj}{Conjecture}{Conjectures}
\crefname{ex}{Example}{Examples}
\renewcommand{\emptyset}{\varnothing}
\newcommand{\B}{\mathcal{B}}
\newcommand{\R}{\mathbb{R}}
\newcommand{\Z}{\mathbb{Z}}
\newcommand{\ee}{\mathbf{e}}
\newcommand{\bx}{\mathbf{b}}   
\newcommand{\bb}{\mathbf{b}}   
\newcommand{\bv}{\mathbf{v}}   
\newcommand{\bw}{\mathbf{w}}   
\newcommand{\I}{\mathcal{I}}   
\newcommand{\Tan}[1]{\mathcal{T}_{#1}}   
\newcommand{\Y}{\mathcal{Y}}
\newcommand{\cZ}{\mathcal{Z}}
\newcommand{\SSS}{\mathfrak{S}_{n}}   
\newcommand{\gpfp}{\mathfrak{X}}   
\newcommand{\Xn}{\gpfp_{n}(\bb)}   
\newcommand{\y}{\mathbf{v}}   
\newcommand{\adjto}{\sim}   
\newcommand{\Xnb}{\overline{\gpfp}_{n}(\bb)}   
\newcommand{\Xnp}{\gpfp'_{n}(\bb)}   
\newcommand{\PF}{\mathsf{PF}}
\newcommand{\St}{\mathsf{St}}
\newcommand{\des}{\mathsf{des}}
\newcommand{\Des}{\mathsf{Des}}
\newcommand{\BF}{\mathsf{BF}} 
\newcommand{\defterm}[1]{\emph{#1}}
\newcommand*{\defing}{\defterm}
\newcommand{\fx}{f_{\, \bb}}   
\DeclareMathOperator{\conv}{conv}
\DeclareMathOperator{\Ver}{Vert}   
\DeclareRobustCommand{\rvdots}{%
    \vbox{%
        \baselineskip4\p@\lineskiplimit\z@
        \kern-\p@
        \hbox{.}\hbox{.}\hbox{.}
    }%
}%
\newtheorem*{rep@theorem}{\rep@title}\newcommand{\newreptheorem}[2]{%
\newenvironment{rep#1}[1]{%
\def\rep@title{\bf #2 \ref{##1}}%
\begin{rep@theorem}}%
{\end{rep@theorem}}}
\DeclarePairedDelimiter{\verts}{\lvert}{\rvert}
\newcommand*{\parens}[1]{\left\lparen #1 \right\rparen}
\newcommand*{\bigbraces}[1]{\bigl\lbrace #1 \bigr\rbrace}
\newcommand*{\Biggbraces}[1]{\Biggl\lbrace #1 \Biggr\rbrace}
\newcommand{\card}{\verts}
\newcommand{\setof}{\braces\{\}}
\newcommand*{\maps}{\colon}
\newcommand*{\deftobe}{\coloneqq}   
\newcommand*{\sst}{\,:\,}   
\begin{document}


\title{Combinatorics of generalized parking-function polytopes}


\author{Margaret M. Bayer}
\author{Steffen Borgwardt}
\author{Teressa Chambers}
\author{Spencer Daugherty}
\author{Aleyah Dawkins}
\author{Danai Deligeorgaki}
\author{Hsin-Chieh Liao}
\author{Tyrrell McAllister}
\author{Angela Morrison}
\author{Garrett Nelson}
\author{Andr\'es R. Vindas-Mel\'endez}


\begin{abstract}

For $\bb=(b_1,\dots,b_n)\in \mathbb{Z}_{>0}^n$, a $\bb$\emph{-parking function} is defined to be a sequence $(\beta_1,\dots,\beta_n)$ of positive integers whose nondecreasing rearrangement $\beta'_1\leq \beta'_2\leq \cdots \leq \beta'_n$ satisfies $\beta'_i\leq b_1+\cdots + b_i$.
The $\bb$-parking-function polytope $\Xn$ is the convex hull of all $\bb$-parking functions of length $n$ in $\mathbb{R}^n$.  
Geometric properties of $\Xn$ were previously explored in the specific case where $\bb=(a,b,b,\dots,b)$ and were shown to generalize those of the classical parking-function polytope.
In this work, we study $\Xn$ in full generality. 
We present a minimal inequality and vertex description for $\Xn$, prove it is a generalized permutahedron, and study its $h$-polynomial.
Furthermore, we investigate $\Xn$ through the perspectives of building sets and polymatroids, allowing us to identify its combinatorial types and obtain bounds on its combinatorial and circuit diameters.

\end{abstract}


\maketitle

\section{Introduction}

A \textit{classical parking function} of length $n$ is a list $(\alpha_1, \alpha_2, \ldots, \alpha_n)$ of positive integers whose nondecreasing rearrangement $\alpha'_1 \leq \alpha'_2 \leq \cdots \leq \alpha'_n$ satisfies $\alpha'_i \leq i$. 
It is well-known that the number of classical parking functions of length $n$ is $(n+1)^{n-1}$.
This number surfaces in a variety of places; for example, it counts the number of labeled rooted forests on $n$ vertices and the number of regions of a Shi arrangement (see \cite{Bon} for further discussion).
Let $\PF_n$ denote the convex hull in $\R^n$ of all classical parking functions of length $n$.
In 2020, Stanley \cite{Sta} asked for the volume and the number of vertices, faces, and lattice points of $\PF_n$. 
These questions were first answered by Amanbayeva and Wang in \cite{AW}.
In this article, we present results on the combinatorial and geometric properties of the convex hull of a generalization of classical parking functions, namely $\bb$-parking functions.

\begin{definition}
Let $\bb=(b_1,\dots,b_n)\in \Z_{>0}^n$.  
A $\bb$\emph{-parking function} is a list $(\beta_1,\dots,\beta_n)$ of positive integers whose nondecreasing rearrangement $\beta'_1\leq \beta'_2\leq \cdots \leq \beta'_n$ satisfies $\beta'_i\leq b_1+\cdots + b_i$.
\end{definition}

This generalization of parking functions has been previously explored from an enumerative perspective by Yan \cite{Yan2, Yan, Bon} and Pitman and Stanley \cite{PitmanStanley}.
In~\cite{HanadaLentferVindas}, Hanada, Lentfer, and Vindas-Mel\'endez generalized the classical parking-function polytope to the \emph{$\bb$-parking-function polytope} $\Xn$, i.e., the convex hull of all $\bb$-parking functions of length $n$ in $\R^n$ for $\bb=(b_1,\dots,b_n)\in \Z_{>0}^n$.
The work of Hanada, Lentfer, and Vindas-Mel\'endez focused on the $\bb$-parking-function polytope in the special case when $\bb = (a,b,b,\ldots, b)$.
In particular, they described the face structure and provided a closed volume formula for the $\bb$-parking-function polytope in that special case.  
These parking-function polytopes have surprisingly arisen in different guises and in connection to other polytopes and combinatorial objects, including partial permutahedra \cite{BCC,HS}, Pitman-Stanley polytopes \cite{HanadaLentferVindas,PitmanStanley}, polytopes of win vectors \cite{Backman, BartelsMountWelsh}, and stochastic sandpile models \cite{Selig}.

In this paper, we explore $\bb$-parking-function polytopes for general $\bb$. 
Our contributions about $\bb$-parking-function polytopes lend themselves to fruitful exploration from different perspectives and propose connections to seemingly unrelated combinatorial objects. 
We present definitions, terminology, and references in each relevant section. 
The paper is structured as follows: 
\begin{itemize}[leftmargin=*]\setlength\itemsep{0.5em}

    \item 
    We begin \cref{sec:polyhedra_perspectives} by studying the $\bb$-parking-function polytope from a polyhedral perspective.  
    We show that $\Xn$ is (apart from a single degenerate case) an $n$-dimensional simple polytope (\cref{prop: Xn is n-dimensional,subthm: Xn is simple}), and we give explicit descriptions of the vertices, facets, and edges of $\Xn$ (\cref{subthm: vertices of Xn,subthm: facet-defining inequalities,prop:edges}, respectively).  
    We then show that, up to a unimodular linear isomorphism, $\Xn$ is a generalized permutahedron (\cref {thm: x-park_is_GPerm}).  
    We conclude the \lcnamecref{sec:polyhedra_perspectives} by presenting the
    $h$-polynomial and $f$-vector of $\Xn$
    (\cref{thm:h-poly_of_Xn,cor: f-vector of Xn}, respectively).

    \item
    In \cref{sec:bulding_set} we take a building-set perspective to study $\Xn$, which allows us to completely characterize the face structure of $\Xn$ for any $\bb\in\Z_{>0}^n$ (\cref{thm:face_structure_Xn}).  
    As a consequence, we show that, up to combinatorial equivalence, every $\bb$-parking-function polytope is either the classical parking-function polytope $\PF_n$ or the stellohedron $\St_n$ (\cref
    {cor:combinatorial_equivalence}).

    \item
    We identify $\Xn$ as a special polymatroid (\cref{thm:polymatroid}) in \cref {sec:polymatroid_perspective} and use this perspective to derive linear bounds on its combinatorial and circuit diameters (\cref {thm:boundtight} and \cref {thm:circuitdiameter}, respectively). 

    \item
    In \cref{sec:revisit}, we revisit the classical parking-function polytope $\PF_n$ and summarize its known and new connections to other established families of polytopes. 
    We conclude by adding a description of $\PF_n$ as a projection of a relaxed Birkhoff polytope (\cref{thm:birkhoff}), and $\Xn$ as a projection of a relaxed-partition polytope (\cref{cor:birkhoff}). 

\end{itemize}


\section{Polyhedral perspectives on $\Xn$}
\label{sec:polyhedra_perspectives}

Fix, throughout this work, a positive integer $n$ and a vector $\bb = (b_1, \dots, b_n)\in \Z_{> 0}^n$.  
The following definition introduces our main object of study, i.e., a polytope associated to $\bb$-parking functions, which we denote $\Xn$.  
Standard references on polytopes include \cite{Grunbaum} and \cite{Ziegler}.  

\begin{definition}
    The \emph{$\bb$-parking-function polytope} $\Xn$ is the convex hull of all $\bb$-parking functions in $\R^n$.
\end{definition} 

Recall that the linear action of the symmetric group $\SSS$ on $\R^{n}$ is defined by $\pi\parens{\sum_{i=1}^{n} a_{i} \, \ee_{i}} \deftobe \sum_{i=1}^{n} a_{i} \, \ee_{\pi(i)}$ for all $\pi \in \SSS$, where $\ee_{1}, \dotsc, \ee_{n}$ is the standard basis of $\R^{n}$.  
Note that by definition, the polytope $\Xn$ is symmetric under the action of $\SSS$.  
 For convenience, we use the notation \[{S_{i} \deftobe \sum_{j = 1}^{i} b_{j}} \text{ for } 1 \le i \le n \text{\; and \;} \y_{k} \deftobe (1, \dotsc, 1, S_{k+1}, S_{k+2}, \dotsc, S_{n}) \in \R^{n} \text{ for } 0 \le k \le n.\]  
Further observe that the~$\y_{k}$ are $\bb$-parking functions, so $\y_{k} \in \Xn$ for all $k$.  
We will see below that the permutations $\pi(\y_{k})$ of the~$\y_{k}$ are precisely the vertices of $\Xn$ (\cref{subthm: vertices of Xn}).

A theme that will appear early in our study of $\Xn$ is that the case in which $b_1 = 1$ behaves differently from when $b_1 \ge 2$.
Indeed, observe that, if $b_{1} \ge 2$, or if $b_{1} = 1$ and $k \ge 2$, then the index~$k$ in $\y_{k}$ is precisely the number of entries in $\y_{k}$ that equal $1$.  
In particular, when $b_{1} \ge 2$, the $\bb$-parking functions $\y_{0}, \y_{1}, \dotsc, \y_{n}$ are distinct.
However, if $b_{1} = 1$, then $\y_{0} = \y_{1} = (1, S_{2}, S_{3},\dotsc, S_{n})$. 
This trivial coincidence has important consequences for the structure of the polytope $\Xn$.
\cref{b123} shows the $\bb$-parking-function polytopes $\gpfp_3(1,2,3)$ (left) and
$\gpfp_3(2,3,4)$ (right).  
Note also that many lattice points in these polytopes are not parking functions. 
For example, $(2,2,2)$ is in $\gpfp_3(1,2,3)$, but is not a $\bb$-parking function for $\bb=(1,2,3)$.

\begin{figure}[ht]
\begin{center}
\scalebox{.5}{
\begin{tikzpicture}%
	[x={(-0.587802cm, -0.404450cm)},
	y={(0.809005cm, -0.293947cm)},
	z={(0.000078cm, 0.866034cm)},
	scale=1.000000,
	back/.style={loosely dashed, thick},
	edge/.style={color=black, thick},
	facet/.style={fill=white,fill opacity=.10000},
	vertex/.style={inner sep=0cm, draw=white,fill=white }]

\coordinate (1.00000, 1.00000, 1.00000) at (1.00000, 1.00000, 1.00000);
\coordinate (1.00000, 1.00000, 6.00000) at (1.00000, 1.00000, 6.00000);
\coordinate (1.00000, 3.00000, 6.00000) at (1.00000, 3.00000, 6.00000);
\coordinate (1.00000, 6.00000, 1.00000) at (1.00000, 6.00000, 1.00000);
\coordinate (1.00000, 6.00000, 3.00000) at (1.00000, 6.00000, 3.00000);
\coordinate (3.00000, 1.00000, 6.00000) at (3.00000, 1.00000, 6.00000);
\coordinate (3.00000, 6.00000, 1.00000) at (3.00000, 6.00000, 1.00000);
\coordinate (6.00000, 1.00000, 1.00000) at (6.00000, 1.00000, 1.00000);
\coordinate (6.00000, 1.00000, 3.00000) at (6.00000, 1.00000, 3.00000);
\coordinate (6.00000, 3.00000, 1.00000) at (6.00000, 3.00000, 1.00000);
\draw[edge,back] (1.00000, 1.00000, 1.00000) -- (1.00000, 1.00000, 6.00000);
\draw[edge,back] (1.00000, 1.00000, 1.00000) -- (1.00000, 6.00000, 1.00000);
\draw[edge,back] (1.00000, 1.00000, 1.00000) -- (6.00000, 1.00000, 1.00000);
\node[vertex] at (1.00000, 1.00000, 1.00000)     {\tiny $(1,1,1)$};
\fill[facet] (6.00000, 3.00000, 1.00000) -- (3.00000, 6.00000, 1.00000) -- (1.00000, 6.00000, 3.00000) -- (1.00000, 3.00000, 6.00000) -- (3.00000, 1.00000, 6.00000) -- (6.00000, 1.00000, 3.00000) -- cycle {};
\fill[facet] (6.00000, 3.00000, 1.00000) -- (6.00000, 1.00000, 1.00000) -- (6.00000, 1.00000, 3.00000) -- cycle {};
\fill[facet] (3.00000, 6.00000, 1.00000) -- (1.00000, 6.00000, 1.00000) -- (1.00000, 6.00000, 3.00000) -- cycle {};
\fill[facet] (3.00000, 1.00000, 6.00000) -- (1.00000, 1.00000, 6.00000) -- (1.00000, 3.00000, 6.00000) -- cycle {};
\draw[edge] (1.00000, 1.00000, 6.00000) -- (1.00000, 3.00000, 6.00000);
\draw[edge] (1.00000, 1.00000, 6.00000) -- (3.00000, 1.00000, 6.00000);
\draw[edge] (1.00000, 3.00000, 6.00000) -- (1.00000, 6.00000, 3.00000);
\draw[edge] (1.00000, 3.00000, 6.00000) -- (3.00000, 1.00000, 6.00000);
\draw[edge] (1.00000, 6.00000, 1.00000) -- (1.00000, 6.00000, 3.00000);
\draw[edge] (1.00000, 6.00000, 1.00000) -- (3.00000, 6.00000, 1.00000);
\draw[edge] (1.00000, 6.00000, 3.00000) -- (3.00000, 6.00000, 1.00000);
\draw[edge] (3.00000, 1.00000, 6.00000) -- (6.00000, 1.00000, 3.00000);
\draw[edge] (3.00000, 6.00000, 1.00000) -- (6.00000, 3.00000, 1.00000);
\draw[edge] (6.00000, 1.00000, 1.00000) -- (6.00000, 1.00000, 3.00000);
\draw[edge] (6.00000, 1.00000, 1.00000) -- (6.00000, 3.00000, 1.00000);
\draw[edge] (6.00000, 1.00000, 3.00000) -- (6.00000, 3.00000, 1.00000);
\node[vertex] at (1.00000, 1.00000, 6.00000)     {\tiny $(1,1,6)$};
\node[vertex] at (1.00000, 3.00000, 6.00000)     {\tiny $(1,3,6)$};
\node[vertex] at (1.00000, 6.00000, 1.00000)     {\tiny $(1,6,1)$};
\node[vertex] at (1.00000, 6.00000, 3.00000)     {\tiny $(1,6,3)$};
\node[vertex] at (3.00000, 1.00000, 6.00000)     {\tiny $(3,1,6)$};
\node[vertex] at (3.00000, 6.00000, 1.00000)     {\tiny $(3,6,1)$};
\node[vertex] at (6.00000, 1.00000, 1.00000)     {\tiny $(6,1,1)$};
\node[vertex] at (6.00000, 1.00000, 3.00000)     {\tiny $(6,1,3)$};
\node[vertex] at (6.00000, 3.00000, 1.00000)     {\tiny $(6,3,1)$};
\end{tikzpicture}

\qquad\qquad

\begin{tikzpicture}%
	[x={(-0.587802cm, -0.404450cm)},
	y={(0.809005cm, -0.293947cm)},
	z={(0.000078cm, 0.866034cm)},
	scale=1.000000,
	back/.style={loosely dashed, thick},
	edge/.style={color=black, thick},
	facet/.style={fill=white, fill opacity=0.100000},
	vertex/.style={inner sep=0cm, draw=white,fill=white }]

\coordinate (1.00000, 1.00000, 1.00000) at (1.00000, 1.00000, 1.00000);
\coordinate (1.00000, 1.00000, 9.00000) at (1.00000, 1.00000, 9.00000);
\coordinate (1.00000, 5.00000, 9.00000) at (1.00000, 5.00000, 9.00000);
\coordinate (1.00000, 9.00000, 1.00000) at (1.00000, 9.00000, 1.00000);
\coordinate (1.00000, 9.00000, 5.00000) at (1.00000, 9.00000, 5.00000);
\coordinate (2.00000, 5.00000, 9.00000) at (2.00000, 5.00000, 9.00000);
\coordinate (2.00000, 9.00000, 5.00000) at (2.00000, 9.00000, 5.00000);
\coordinate (5.00000, 1.00000, 9.00000) at (5.00000, 1.00000, 9.00000);
\coordinate (5.00000, 2.00000, 9.00000) at (5.00000, 2.00000, 9.00000);
\coordinate (5.00000, 9.00000, 1.00000) at (5.00000, 9.00000, 1.00000);
\coordinate (5.00000, 9.00000, 2.00000) at (5.00000, 9.00000, 2.00000);
\coordinate (9.00000, 1.00000, 1.00000) at (9.00000, 1.00000, 1.00000);
\coordinate (9.00000, 1.00000, 5.00000) at (9.00000, 1.00000, 5.00000);
\coordinate (9.00000, 2.00000, 5.00000) at (9.00000, 2.00000, 5.00000);
\coordinate (9.00000, 5.00000, 1.00000) at (9.00000, 5.00000, 1.00000);
\coordinate (9.00000, 5.00000, 2.00000) at (9.00000, 5.00000, 2.00000);
\draw[edge,back] (1.00000, 1.00000, 1.00000) -- (1.00000, 1.00000, 9.00000);
\draw[edge,back] (1.00000, 1.00000, 1.00000) -- (1.00000, 9.00000, 1.00000);
\draw[edge,back] (1.00000, 1.00000, 1.00000) -- (9.00000, 1.00000, 1.00000);
\node[vertex] at (1.00000, 1.00000, 1.00000)     {\tiny $(1,1,1)$};
\fill[facet] (9.00000, 5.00000, 2.00000) -- (5.00000, 9.00000, 2.00000) -- (2.00000, 9.00000, 5.00000) -- (2.00000, 5.00000, 9.00000) -- (5.00000, 2.00000, 9.00000) -- (9.00000, 2.00000, 5.00000) -- cycle {};
\fill[facet] (9.00000, 5.00000, 2.00000) -- (5.00000, 9.00000, 2.00000) -- (5.00000, 9.00000, 1.00000) -- (9.00000, 5.00000, 1.00000) -- cycle {};
\fill[facet] (9.00000, 2.00000, 5.00000) -- (5.00000, 2.00000, 9.00000) -- (5.00000, 1.00000, 9.00000) -- (9.00000, 1.00000, 5.00000) -- cycle {};
\fill[facet] (9.00000, 5.00000, 2.00000) -- (9.00000, 2.00000, 5.00000) -- (9.00000, 1.00000, 5.00000) -- (9.00000, 1.00000, 1.00000) -- (9.00000, 5.00000, 1.00000) -- cycle {};
\fill[facet] (2.00000, 9.00000, 5.00000) -- (1.00000, 9.00000, 5.00000) -- (1.00000, 5.00000, 9.00000) -- (2.00000, 5.00000, 9.00000) -- cycle {};
\fill[facet] (5.00000, 9.00000, 2.00000) -- (2.00000, 9.00000, 5.00000) -- (1.00000, 9.00000, 5.00000) -- (1.00000, 9.00000, 1.00000) -- (5.00000, 9.00000, 1.00000) -- cycle {};
\fill[facet] (5.00000, 2.00000, 9.00000) -- (2.00000, 5.00000, 9.00000) -- (1.00000, 5.00000, 9.00000) -- (1.00000, 1.00000, 9.00000) -- (5.00000, 1.00000, 9.00000) -- cycle {};
\draw[edge] (1.00000, 1.00000, 9.00000) -- (1.00000, 5.00000, 9.00000);
\draw[edge] (1.00000, 1.00000, 9.00000) -- (5.00000, 1.00000, 9.00000);
\draw[edge] (1.00000, 5.00000, 9.00000) -- (1.00000, 9.00000, 5.00000);
\draw[edge] (1.00000, 5.00000, 9.00000) -- (2.00000, 5.00000, 9.00000);
\draw[edge] (1.00000, 9.00000, 1.00000) -- (1.00000, 9.00000, 5.00000);
\draw[edge] (1.00000, 9.00000, 1.00000) -- (5.00000, 9.00000, 1.00000);
\draw[edge] (1.00000, 9.00000, 5.00000) -- (2.00000, 9.00000, 5.00000);
\draw[edge] (2.00000, 5.00000, 9.00000) -- (2.00000, 9.00000, 5.00000);
\draw[edge] (2.00000, 5.00000, 9.00000) -- (5.00000, 2.00000, 9.00000);
\draw[edge] (2.00000, 9.00000, 5.00000) -- (5.00000, 9.00000, 2.00000);
\draw[edge] (5.00000, 1.00000, 9.00000) -- (5.00000, 2.00000, 9.00000);
\draw[edge] (5.00000, 1.00000, 9.00000) -- (9.00000, 1.00000, 5.00000);
\draw[edge] (5.00000, 2.00000, 9.00000) -- (9.00000, 2.00000, 5.00000);
\draw[edge] (5.00000, 9.00000, 1.00000) -- (5.00000, 9.00000, 2.00000);
\draw[edge] (5.00000, 9.00000, 1.00000) -- (9.00000, 5.00000, 1.00000);
\draw[edge] (5.00000, 9.00000, 2.00000) -- (9.00000, 5.00000, 2.00000);
\draw[edge] (9.00000, 1.00000, 1.00000) -- (9.00000, 1.00000, 5.00000);
\draw[edge] (9.00000, 1.00000, 1.00000) -- (9.00000, 5.00000, 1.00000);
\draw[edge] (9.00000, 1.00000, 5.00000) -- (9.00000, 2.00000, 5.00000);
\draw[edge] (9.00000, 2.00000, 5.00000) -- (9.00000, 5.00000, 2.00000);
\draw[edge] (9.00000, 5.00000, 1.00000) -- (9.00000, 5.00000, 2.00000);
\node[vertex] at (1.00000, 1.00000, 9.00000)     {\tiny $(1,1,9)$};
\node[vertex] at (1.00000, 5.00000, 9.00000)     {\tiny $(1,5,9)$};
\node[vertex] at (1.00000, 9.00000, 1.00000)     {\tiny $(1,9,1)$};
\node[vertex] at (1.00000, 9.00000, 5.00000)     {\tiny $(1,9,5)$};
\node[vertex] at (2.00000, 5.00000, 9.00000)     {\tiny $(2,5,9)$};
\node[vertex] at (2.00000, 9.00000, 5.00000)     {\tiny $(2,9,5)$};
\node[vertex] at (5.00000, 1.00000, 9.00000)     {\tiny $(5,1,9)$};
\node[vertex] at (5.00000, 2.00000, 9.00000)     {\tiny $(5,2,9)$};
\node[vertex] at (5.00000, 9.00000, 1.00000)     {\tiny $(5,9,1)$};
\node[vertex] at (5.00000, 9.00000, 2.00000)     {\tiny $(5,9,2)$};
\node[vertex] at (9.00000, 1.00000, 1.00000)     {\tiny $(9,1,1)$};
\node[vertex] at (9.00000, 1.00000, 5.00000)     {\tiny $(9,1,5)$};
\node[vertex] at (9.00000, 2.00000, 5.00000)     {\tiny $(9,2,5)$};
\node[vertex] at (9.00000, 5.00000, 1.00000)     {\tiny $(9,5,1)$};
\node[vertex] at (9.00000, 5.00000, 2.00000)     {\tiny $(9,5,2)$};
\end{tikzpicture}
}
\caption{The polytopes $\gpfp_3(1,2,3)$ (left) and $\gpfp_3(2,3,4)$ (right).}
\label{b123}
\end{center}
\end{figure}

We begin by proving that, with the exception of a single ``degenerate'' case, $\Xn$ is an $n$-dimensional polytope.

\begin{proposition}\label{prop: Xn is n-dimensional}
    For all $n$, $\dim(\Xn)=n$, unless $n = 1 = b_{1}$, in which case $\dim(\Xn)=0$.
\end{proposition}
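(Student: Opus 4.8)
The plan is to exhibit $n+1$ affinely independent $\bb$-parking functions lying in $\Xn$ in all but the stated exceptional case, and to treat that case by direct inspection. Since $\Xn \subseteq \R^n$, producing such points forces $\dim(\Xn) = n$.

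First I would dispose of the degenerate case $n = 1 = b_{1}$: a length-$1$ $\bb$-parking function is a positive integer $\beta_{1}$ with $\beta_{1} = \beta_{1}' \le S_{1} = b_{1} = 1$, so $\beta_{1} = 1$ is the only one, $\Xn$ is a single point, and $\dim(\Xn) = 0$. Next, assume we are not in this case, i.e.\ either $n \ge 2$, or $n = 1$ and $b_{1} \ge 2$. I would take the all-ones vector $\mathbf{1} \deftobe (1, \dotsc, 1)$, which is a $\bb$-parking function because its nondecreasing rearrangement is $(1, \dotsc, 1)$ and $1 \le i \le S_{i}$ for every $i$. For each $i \in \{1, \dotsc, n\}$, the vector $\mathbf{1} + \ee_{i}$ has one entry equal to $2$ and $n-1$ entries equal to $1$, so its nondecreasing rearrangement is $(1, \dotsc, 1, 2)$; the only defining inequality that is not automatic is $2 \le S_{n}$. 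Since $S_{n} = b_{1} + \dotsb + b_{n} \ge \max(n, b_{1}) \ge 2$ precisely outside the exceptional case, each $\mathbf{1} + \ee_{i}$ is a $\bb$-parking function, hence lies in $\Xn$. The $n+1$ points $\mathbf{1}, \mathbf{1} + \ee_{1}, \dotsc, \mathbf{1} + \ee_{n}$ are affinely independent because their differences from $\mathbf{1}$ are $\ee_{1}, \dotsc, \ee_{n}$, which are linearly independent. Therefore $\dim(\Xn) \ge n$, and combined with $\Xn \subseteq \R^{n}$ we get $\dim(\Xn) = n$.

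There is no substantial obstacle here; the only point requiring care is the case split, since the inequality $S_{n} \ge 2$ is exactly what isolates the pair $(n, b_{1}) = (1, 1)$, and one should note that $\mathbf{1}$ itself is always a $\bb$-parking function (immediate from $S_{i} \ge i \ge 1$). Note also that this argument is deliberately elementary, using only the definition of a $\bb$-parking function, so that it may be invoked in the later results on vertices and facets without circularity.
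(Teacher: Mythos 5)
Your proof is correct, and it follows the same high-level strategy as the paper (exhibit $n+1$ affinely independent points of $\Xn$), but with a different and arguably cleaner choice of witnesses. The paper uses the vertices $\y_{0}, \dotsc, \y_{n}$ when $b_{1} \ge 2$, and must swap in the point $(S_2, 1, S_3, \dotsc, S_n)$ for $\y_0$ when $b_1 = 1$ because $\y_{0} = \y_{1}$ in that case; it also leaves the affine independence of those points as an unstated (though easy) check. Your points $\mathbf{1}, \mathbf{1}+\ee_{1}, \dotsc, \mathbf{1}+\ee_{n}$ avoid the $b_{1}=1$ versus $b_{1}\ge 2$ case split entirely, and their affine independence is immediate since the difference vectors are the standard basis. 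The only thing your argument must verify that the paper's does not is that $\mathbf{1}+\ee_{i}$ is a $\bb$-parking function, which you do correctly: the binding condition is $2 \le S_{n}$, and $S_{n} \ge \max(n, b_{1}) \ge 2$ holds exactly outside the degenerate case $n = 1 = b_{1}$. Both arguments are elementary and non-circular; yours trades the paper's reuse of the soon-to-be-identified vertices for a more self-contained computation.
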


\begin{proof}
    In the exceptional case $n = 1 = b_{1}$, the polytope $\Xn$ contains only the point~$(1)$ in $\R^{1}$.  
    In all other cases, $\Xn$ is $n$-dimensional because, if $b_{1} \ge 2$, then $\setof{\y_{k} \sst 0 \le k \le n}$ is a set of $n + 1$ affinely independent points in $\Xn$, and, if $b_1 = 1$ but $n \ge 2$, then $\setof{(S_2, 1, S_3, \dotsc, S_n)} \cup \setof{\y_k \sst 1 \le k \le n}$ is a set of $n + 1$ affinely independent points in $\Xn$.
\end{proof}

In view of \cref{prop: Xn is n-dimensional}, we exclude the degenerate case $n = 1 = b_{1}$ from this point forward.  
That is, we proceed throughout the rest of this paper with the additional assumption that either $n \ge 2$ or, if $n = 1$, then $b_{1} \ge 2$.  
Thus, $\dim \Xn = n$.


\subsection{Vertices, facets, and edges of \texorpdfstring{$\Xn$}{$\Xn$}}

In this subsection, we describe the vertices, facets, and edges of the $\bb$-parking-function~polytope $\Xn$.  
In particular, we find that $\Xn$ is a simple polytope, meaning that the number of edges of $\Xn$ that are incident to each vertex of $\Xn$ is $n$.

\begin{theorem} \label{thm:inequality_description}
    \mbox{}
    \begin{enumerate}[label=(\alph*), ref={\thetheorem(\alph*)}]
        \item
        \label[subthm]{subthm: Xn is simple}%
        The polytope $\Xn$ is simple.

        \item  
        \label[subthm]{subthm: vertices of Xn}%
        The set of vertices of $\Xn$ is $\setof{\pi(\y_{k}) \sst \text{$\pi \in \mathfrak{S}_n$ and $0 \le k \le n$}}$.
           
        \item  
        \label[subthm]{subthm: facet-defining inequalities}%
        The \emph{minimal} inequality description
        of $\Xn$ is the set of points $(x_1, \dotsc, x_n)$ in $\R^n$ such that $x_i \ge 1$ for $1 \le i \le n$ and
        \begin{align}
            \sum_{i \in I}
            x_i
            &\quad \le \quad 
            \sum_{i = n - \card{I} + 1}^{n} \, S_{i}, 
            \label[ineq]{x-parking}\tag{{\color{blue}{$\text{$\bb$-parking}$}}}
            \\
        \intertext{or, equivalently,}
            \sum_{i \in I}
            x_i
            &\quad \le \quad 
            \sum_{j=1}^n 
            \min \setof{j, \card{I}} \, b_{n-j+1},
            \notag 
        \end{align}
        for all nonempty $I \subseteq [n]$ if $b_{1} \ge 2$, and for all nonempty $I \subseteq [n]$ with $\card{I} \ne n-1$ if $b_{1} = 1$.
        In other words, these inequalities are precisely the facet-defining inequalities for $\Xn$.
    \end{enumerate}
\end{theorem}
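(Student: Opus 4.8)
The plan is to prove part (b) together with the (not-yet-minimal) inequality description first, then deduce (c), and finally (a). Throughout, write $\mathbf{1}\deftobe(1,\dotsc,1)\in\R^n$, $S_i\deftobe b_1+\dotsb+b_i$, and $T_k\deftobe\sum_{i=n-k+1}^{n}S_i$ for $0\le k\le n$, so the right-hand side of the claimed inequality indexed by $I$ is $T_{\card{I}}$; let $P\subseteq\R^n$ be the polyhedron cut out by $x_i\ge1$ for all $i\in[n]$ and $\sum_{i\in I}x_i\le T_{\card{I}}$ for all nonempty $I\subseteq[n]$, which is a polytope because $x_i\ge1$ for all $i$ and $\sum_j x_j\le T_n$ bound each coordinate. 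First, $\Xn\subseteq P$: every $\bb$-parking function $\beta$ has positive integer entries, so $\beta_i\ge1$; and for nonempty $I$ with $\card{I}=k$, the sum $\sum_{i\in I}\beta_i$ is at most the sum $\beta'_{n-k+1}+\dotsb+\beta'_n$ of the $k$ largest entries of $\beta$, which is $\le S_{n-k+1}+\dotsb+S_n=T_k$ since $\beta'_j\le S_j$. Hence every $\bb$-parking function, and so $\Xn$, lies in $P$.

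Next I determine the vertices of $P$. Translating by $-\mathbf{1}$ gives $P-\mathbf{1}=\setof{y\ge0\sst\sum_{i\in I}y_i\le g(I)\text{ for all nonempty }I}$ with $g(I)\deftobe T_{\card{I}}-\card{I}$. Since $g(I)$ depends only on $\card{I}$, with increments $g(k)-g(k-1)=S_{n-k+1}-1$ that are nonnegative and weakly decreasing in $k$ (because $S_1\le\dotsb\le S_n$), the function $g$ is monotone and submodular with $g(\emptyset)=0$; thus $P-\mathbf{1}$ is the independence polytope of the associated polymatroid. By the classical greedy characterization of the vertices of such a polytope, these are exactly the points $y^{\sigma,k}$ for $\sigma\in\SSS$ and $0\le k\le n$ with $y^{\sigma,k}_{\sigma(i)}=S_{n-i+1}-1$ for $i\le k$ and $y^{\sigma,k}_{\sigma(i)}=0$ for $i>k$. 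Translating back, the vertices of $P$ are the points $v^{\sigma,k}$ with $v^{\sigma,k}_{\sigma(i)}=S_{n-i+1}$ for $i\le k$ and $=1$ for $i>k$; each such point is a rearrangement of the entry multiset of $\y_{n-k}$, so the vertex set of $P$ equals $\setof{\pi(\y_m)\sst\pi\in\SSS,\ 0\le m\le n}$. As every $\y_m$ is a $\bb$-parking function, $\pi(\y_m)\in\Xn$, so $P=\conv(\Ver(P))\subseteq\Xn\subseteq P$. This yields $\Xn=P$, the inequality description, and (b).

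For (c), $\dim\Xn=n$ by \cref{prop: Xn is n-dimensional}, so every inequality in this description is either redundant or facet-defining; I identify the associated faces with smaller parking polytopes. The face $\setof{x\in\Xn\sst x_n=1}$ has vertices the $\pi(\y_m)$ with last coordinate $1$, which upon forgetting the last coordinate are exactly the vertices of $\gpfp_{n-1}(\bb')$ for $\bb'=(b_1+b_2,b_3,\dotsc,b_n)$; since $b'_1\ge2$, \cref{prop: Xn is n-dimensional} gives this face dimension $n-1$, so (by symmetry) every $x_i\ge1$ is facet-defining. For $I=\setof{1,\dotsc,m}$, one checks that the face $F=\setof{x\in\Xn\sst\sum_{i=1}^m x_i=T_m}$ has as vertices precisely the points whose first $m$ coordinates are a rearrangement of $(S_n,\dotsc,S_{n-m+1})$ and whose last $n-m$ coordinates form a vertex of $\gpfp_{n-m}(\bb'')$ with $\bb''=(b_1,\dotsc,b_{n-m})$, so that $F$ is the product of the permutahedron on $(S_n,\dotsc,S_{n-m+1})$ in $\R^m$ with $\gpfp_{n-m}(\bb'')$ in $\R^{n-m}$. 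Hence $\dim F=(m-1)+\dim\gpfp_{n-m}(\bb'')$, which by \cref{prop: Xn is n-dimensional} equals $n-1$ unless $m=n-1$ and $b_1=1$ (where the second factor is a point), while for $m=n$ the face $F$ is the permutahedron on the distinct numbers $S_1<\dotsb<S_n$ and has dimension $n-1$. Thus the facet-defining inequalities are exactly $x_i\ge1$ for $i\in[n]$ together with $\sum_{i\in I}x_i\le T_{\card{I}}$ for all nonempty $I$, the only exceptions being the $\card{I}=n-1$ inequalities when $b_1=1$; these are then redundant, since if $[n]\setminus I=\setof{i_0}$ then $\sum_{i\in I}x_i=\sum_j x_j-x_{i_0}\le T_n-1=T_{n-1}$ because $S_1=b_1=1$. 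This proves (c).

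Finally, (a): let $v=v^{\sigma,k}$ be a vertex, say $\sigma=\mathrm{id}$, so $v=(S_n,\dotsc,S_{n-k+1},1,\dotsc,1)$. Since $S_n>\dotsb>S_{n-k+1}$ are distinct and exceed $1$ unless $k=n$ and $b_1=1$, one computes that the inequalities of the full description tight at $v$ are $x_i\ge1$ for those $i$ with $v_i=1$, and $\sum_{i=1}^m x_i\le T_m$ for $m\in\setof{1,\dotsc,k}$, plus $m=n$ in the single further case $b_1=1$ and $k=n-1$. Deleting the non-facets (those of size $n-1$ when $b_1=1$) and checking the cases $b_1\ge2$, $b_1=1$ with $k\le n-2$, and $b_1=1$ with $k\in\setof{n-1,n}$, one finds that in every case exactly $n$ facets pass through $v$; since every vertex of an $n$-polytope lies on at least $n$ facets, $\Xn$ is simple. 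The conceptual core is the translation to a polymatroid in the second paragraph, after which (b) is immediate; the genuine work is keeping the exceptional behavior at $b_1=1$ — the coincidence $\y_0=\y_1$, the non-facet $\card{I}=n-1$, and the boundary vertices $k\in\setof{n-1,n}$ — consistent throughout the proofs of (c) and (a).
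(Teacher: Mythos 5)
Your proposal is correct, and it takes a genuinely different route from the paper's proof. The paper establishes $\Xn=P(\bb)$ and all three parts simultaneously by a self-contained analysis: for a weakly increasing vertex $\bw$ of $P(\bb)$ it studies the family $\I$ of tight \eqref{x-parking} inequalities, shows every $I\in\I$ must be a suffix $I_j=\setof{j,\dotsc,n}$, and then reads off $\bw=\y_k$ together with the count of exactly $n$ tight inequalities, which yields simplicity and minimality in one pass. You instead translate by $-\mathbf{1}$, observe that the right-hand sides depend only on $\card{I}$ with weakly decreasing increments $S_{n-k+1}-1\ge 0$, and invoke the Edmonds--Shapley greedy description of polymatroid vertices to get part (b) essentially for free; this is precisely the alternative the paper itself flags in \cref{sec:polymatroid_perspective} but deliberately avoids in order to stay self-contained. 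Your treatment of (c) is also different and arguably gives more: rather than merely certifying that each inequality is tight somewhere and that vertices are simple, you identify the face $\setof{x_n=1}$ with $\gpfp_{n-1}(b_1+b_2,b_3,\dotsc,b_n)$ and the face $U_{[m]}$ with a product of a permutahedron and $\gpfp_{n-m}(b_1,\dotsc,b_{n-m})$, which anticipates the recursive face structure worked out via building sets in \cref{sec:bulding_set}. Your case analysis at $b_1=1$ (the coincidence $\y_0=\y_1$, the redundant $\card{I}=n-1$ inequalities, and the extra tight constraint $I=[n]$ at $k\in\setof{n-1,n}$) matches the paper's and the facet counts of $n$ per vertex come out correctly in every case. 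Two small points worth tightening if you write this up: cite the greedy/vertex theorem explicitly (Edmonds or Shapley, as the paper does in \cref{sec:polymatroid_perspective}), and note in (c) that distinct listed inequalities have non-proportional normal vectors, so they define distinct facets and the resulting system of one inequality per facet is indeed the unique minimal description of the full-dimensional polytope.
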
 

Before proceeding to the proof of \cref{thm:inequality_description}, we note that, even when $b_{1} = 1$, every point in $\Xn$ satisfies the \eqref{x-parking}~inequalities for \emph{all} subsets $I \subseteq [n]$, including those where $\card{I} = n-1$.  
However, when $b_{1}$ = 1, these ``$\card{I} = n-1$'' \eqref{x-parking}~inequalities are redundant because they follow from the \eqref{x-parking}~inequality for $[n]$ together with the inequalities $x_{i} \ge 1$. 
Let $m \in [n]$ and $I \deftobe [n] \setminus \setof{m}$.  
Then the \eqref{x-parking}~inequality for $[n]$ is
\begin{equation*}
    \sum_{i \in [n]} x_{n} \le \sum_{j=1}^{n-1} j \, b_{n-j + 1} + n.
\end{equation*}
Thus, subtracting the inequality $x_{m} \ge 1$ yields
\begin{equation*}
    \sum_{i \in I} x_{i}
    \le %
    \sum_{j=1}^{n-1} 
    j \, b_{n-j + 1} + (n-1)
    = %
    \sum_{j=1}^n 
    \min \setof{j, \card{I}} \, b_{n-j+1},
\end{equation*}
which is the \eqref{x-parking}~inequality for $I$.  
Nonetheless, none of the other inequalities can be eliminated, as the following proof demonstrates \emph{inter alia}.

We also remark here that \cref{thm:inequality_description} can be proved using the established theory of polymatroids. 
See \cref{sec:polymatroid_perspective} for further discussion of this connection. 
However, for completeness, we provide a proof here of \cref{thm:inequality_description} that does not rely on that theory.
In fact, it is the details of the proof of \cref{thm:inequality_description} that motivate the forthcoming connections with generalized permutahedra and polymatroids.  

\begin{proof}[Proof of \cref{thm:inequality_description}]
    We temporarily write $P(\bb)$ for the polytope of points in $\R^{n}$ satisfying the system of inequalities given in \cref{subthm: facet-defining inequalities}.  
    The outline of the proof is as follows.  
    We first prove that $\Xn \subseteq P(\bb)$.  
    We then prove that the vertices of $P(\bb)$ are all of the form $\pi(\y_{k})$ as in \cref{subthm: vertices of Xn}. 
    Thus, the vertices of $P(\bb)$ are $\bb$-parking functions, which proves the converse containment $P(\bb) \subseteq \Xn$, giving us $\Xn = P(\bb)$ and the ``$\subseteq$'' direction of \cref{subthm: vertices of Xn}.  
    In the course of proving that every vertex of $P(\bb)$ is of the form $\pi(\y_{k})$, we will have shown that each such vertex satisfies precisely~$n$ of the inequalities given in \cref{subthm: facet-defining inequalities}, which demonstrates that $\Xn$ is simple (\cref{subthm: Xn is simple}).  
    We will then show that every point of the form $\pi(\y_{k})$ is in fact a vertex of $\Xn$, completing the proof of \cref{subthm: vertices of Xn}.  
    Finally, we will see that each of the inequalities in \cref{subthm: facet-defining inequalities} is satisfied with equality by some point in $\Xn$.  
    This fact, together with the fact that every vertex of $\Xn$ satisfies exactly $n$ such inequalities, implies that the system of inequalities in \cref{subthm: facet-defining inequalities} is minimal and thus will complete the proof of
    \cref{thm:inequality_description}.
   
    To prove that $\Xn \subseteq P(\bb)$, it suffices to show that every vertex of $\Xn$ is in $P(\bb)$.  
    To this end, let $\mathbf{v} = (v_{1}, \dotsc, v_{n}) \in \R^{n}$ be a vertex of $\Xn$, and let $I \subseteq [n]$.
    Write $\mathbf{v}' = (v'_{1}, \dotsc, v'_{n})$ for the weakly increasing reordering of $\mathbf{v}$.  
    Thus, the final $\card{I}$ coordinates of $\mathbf{v}'$, namely $\smash{v'_{\smash{n - \card{I} + 1}}, v'_{\smash{n - \card{I} + 2}}, \dotsc, v'_{n}}$, are the $\card{I}$ weakly largest coordinates of $\mathbf{v}$.  
    Moreover, $\mathbf{v}$ is a $\bb$\nobreakdash-parking function (because the vertices of the convex hull of a finite set are elements of that set), so $1 \le v'_{i} \le \smash{\sum_{j = 1}^{i} b_{j}}$ for $1 \le i \le n$.  
    Hence, $v_{i} \ge 1$ for $1 \le i \le n$, and
    \begin{equation*}
        \sum_{i \in I} v_{i} 
        \; \le \; %
        \sum_{i = n - \card{I} + 1}^{n} v'_{i} 
        \; \le \; %
        \sum_{i = n - \card{I} + 1}^{n} \, \sum_{j = 1}^{i} b_{j}
        \; = \; %
        \sum_{j = 1}^{n} \, \sum_{i = \max \setof{j, \, n - \card{I} + 1}}^{n} b_{j}
        \; = \; %
        \sum_{j = 1}^{n} \min \setof{n - j + 1, \card{I}} \, b_{j}.
    \end{equation*}
    Reversing the order of summation yields that $\mathbf{v}$ satisfies \cref{x-parking}.  
    Thus, $\Xn \subseteq P(\bb)$.

    We now prove the converse containment.
    Let $\bw = (w_{1}, \dotsc, w_{n})$ be a vertex of $P(\bb)$.  
    It suffices to show that $\bw$ is a $\bb$-parking function.  
    Since $P(\bb)$ and the set of $\bb$-parking functions are both invariant under the action of $\SSS$ on $\R^{n}$, we may suppose that $w_{1} \le \dotsb \le w_{n}$. 
    We will show that $\bw = \y_{k}$ for some $k$, making $\bw$ evidently a $\bb$-parking function.
   
    Let $\I$ be the family of all nonempty subsets $I \subseteq [n]$ such that
    \begin{equation}\label{eq: w is a vertex}
        \sum_{i \in I}
        w_i
        =
        \sum_{i = n - \card{I} + 1}^{n} \, S_{i},
    \end{equation}
    and furthermore such that $\card{I} \ne n-1$ if $b_{1} = 1$.
    Thus, $\I$ indexes the set of inequalities of the form~\eqref{x-parking} that the vertex $\bw$ satisfies with equality.  
    Let $k$ be the number of coordinates of $\bw$ that equal~$1$.  
    Since $P(\bb)$ is a polytope in $\R^{n}$, and $\bw$ is a vertex of this polytope, $\bw$ satisfies with equality at least $n$ of the inequalities that define $P(\bb)$.  
    Hence, $\card{\I} \ge n - k$.

    For each $I \subseteq [n]$, let $I_{+} \deftobe \setof{n- \card{I} + 1, n - \card{I} + 2, \dotsc, n}$, so that $I_{+}$ comprises the last $\card{I}$ elements of $[n]$.  
    (We will see below that in fact $I = I_{+}$ for all $I \in \I$.)  
    Since
    \begin{equation*}
        \sum_{i = n - \card{I} + 1}^{n} \, S_i
        =
        \sum_{i \in I}
        w_i
        \le
        \sum_{i = n - \card{I} + 1}^{n} w_{i} 
        \le
        \sum_{i = n - \card{I} + 1}^{n} \, S_i
        \qquad
        \text{for all $I \in \I$}
    \end{equation*}
    (where the middle inequality holds because $\bw$ is weakly increasing, and the last inequality holds because $\bw \in P(\bb)$), we have that
    \begin{equation}
    \label{eq: sum over I-plus equality equals sum over I}%
        \sum_{i \in I} w_i 
        =%
        \sum_{i = n - \card{I} + 1}^{n} w_{i}
        \qquad
        \text{for all $I \in \I$}
    \end{equation}
    and, in particular, by \cref{eq: w is a vertex}, that $I_{+} \in \I$ for all $I \in \I$.  
    Moreover, note that \cref{eq: sum over I-plus equality equals sum over I} is an equation of two sums, each of which is a sum of the same number $\card{I}$ of terms, and that the $\ell$\textsuperscript{th} term on the left-hand side of \cref{eq: sum over I-plus equality equals sum over I} is bounded above by the $\ell$\textsuperscript{th} term on the right-hand side for $1 \le \ell \le \card{I}$.
    It follows that the $\ell$\textsuperscript{th} term on the left-hand side equals the $\ell$\textsuperscript{th} term on the right-hand side for $1 \le \ell \le \card{I}$. 
    In particular, the first terms on each side are equal:
    \begin{equation}
    \label{eq: wsubminI equals wsubnminuscardIplus1}%
        w_{\min(I)} = w_{n - \card{I} + 1}
        \qquad
        \text{for all $I \in \I$}.
    \end{equation}
    
    Since $\bw \in P(\bb)$, we have in particular that
    \begin{equation}\label[ineq]{ineq: vertex of Pb}
        \sum_{i= n - \card{I} + 2}^{n} w_{i}
        \le
        \sum_{i = n - \card{I} + 2}^{n} \, S_i 
        \qquad \text{for all $I \subseteq [n]$},
    \end{equation}
    while, from \cref{eq: w is a vertex,eq: sum over I-plus equality equals sum over I},
    \begin{equation}\label{eq: vertex of Pb}
        \sum_{i= n - \card{I} + 1}^{n} w_{i}
        =
        \sum_{i = n - \card{I} + 1}^{n} \, S_i
        \qquad
        \text{for all $I \in \I$}.
    \end{equation}
    Subtracting \cref{ineq: vertex of Pb} from \cref{eq: vertex of Pb} yields that
    \begin{equation}
    \label[ineq]{ineq: lower bound on wsubnminuscardIplus1} %
        w_{n-\card{I} + 1} 
        \ge
        S_{n- \card{I} + 1}
        \qquad \text{for all $I \in \I$}.
    \end{equation}
    Moreover, since $\bw \in P(\bb)$,
    \begin{equation}\label[ineq]{ineq2: vertex of Pb}
        \sum_{i= n - \card{I}}^{n} w_{i}
        \le
        \sum_{i = n - \card{I}}^{n} \, S_i
        \qquad 
        \text{for all $I \subseteq [n]$ such that $\card{I} \le n-1$}.
    \end{equation}
    Subtracting \cref{eq: vertex of Pb} from \cref{ineq2: vertex of Pb} yields
    \begin{equation}
    \label[ineq]{ineq: upper bound on wsubnminuscardI} %
        w_{n-\card{I}} 
        \le
        S_{n-\card{I}}
        \qquad \text{for all $I \in \I$ such that $\card{I} \le n-1$}.
    \end{equation}
    Since $S_{n - \card{I}} < S_{n - \card{I} + 1}$, it follows from \cref{ineq: lower bound on wsubnminuscardIplus1,ineq: upper bound on wsubnminuscardI} that
    \begin{equation}
    \label[ineq]{ineq: wsubnminuscardI less than wsubnminuscardIplus1}
        w_{n-\card{I}} < w_{n-\card{I} + 1}
        \qquad %
        \text{for all $I\in \I$ such that $\card{I} \le n-1$}.
    \end{equation}
   
    We now get that, in fact, $I = I_{+}$ for all $I \in \I$, since otherwise we would have an $I \in \I$ with $\card{I} \le n - 1$ and $\min(I) < \min(I_{+}) = n - \card{I} + 1 $, and so, from \cref{eq: wsubminI equals wsubnminuscardIplus1} and $w_{\min(I)} \le w_{n - \card{I}} \le w_{n - \card{I} + 1}$, we would get that $w_{n - \card{I}} = w_{n - \card{I} + 1}$, contradicting \cref{ineq: wsubnminuscardI less than wsubnminuscardIplus1}.
    Thus, every element of $\I$ is of the form $I = I_{j} \deftobe \setof{j, j+1, \dotsc, n}$ for some $1 \le j \le n$.

    We will now show that $\bw = \y_{k} = (1, \dotsc, 1, S_{k+1}, S_{k+2}, \dotsc, S_{n})$.  
    However, we must separately consider two cases.
    For the first case, suppose that $w_{j} > 1$ for all $j$ such that $I_{j} \in \I$.  
    Since ${w_{1} = \dotsb = w_{k} = 1}$, it follows that $j \ge k+1$ for all $j$ such that $I_{j} \in \I$.  
    In other words, every $I \in \I$ is of the form $I_{j}$ with $k+1 \le j \le n$.
    But there are only $n-k$ such subsets of~$[n]$, and $\card{\I} \ge n - k$, so we must have that $\I$ is \emph{precisely} the set $\setof{I_{k+1}, I_{k+2}, \dotsc, I_{n}}$.  
    Subtracting the \eqref{x-parking}~\emph{equation} for~$I_{j+1}$ from the equation for~$I_{j}$, we get that $w_{j} = S_{j}$ for $k+1 \le j \le n - 1$, and the $I_{n}$ equation itself states that $w_{n} = S_{n}$.  
    Thus, $\bw = \y_{k} \in \Xn$, and moreover $\bw$ satisfies precisely $k + \card{\I} = n$ of the inequalities in \cref{subthm: facet-defining inequalities} with equality.

    This leaves the case in which $w_{j} = 1$ for some $j$ such that $I_{j} \in \I$.  
    From \cref{ineq: lower bound on wsubnminuscardIplus1}, it follows that $S_{j} = 1$.  
    Since $1 \le S_{1} < \dotsb < S_{n}$, we get that $j = 1$ and hence $b_{1} = S_{1} = 1 = w_1$ and $[n] = I_{1} \in \I$.  
    (Recall that we are also now assuming that $n \ge 2$ because we have excluded the degenerate case $n = 1 = b_{1}$ in the remarks following \cref{prop: Xn is n-dimensional}.)  
    Furthermore, $k = 1$ because, since $[n] = I_1 \in \I$, we may subtract the \eqref{x-parking} inequality for $I_{k}$ from the \eqref{x-parking} equation for~$[n]$ to get that
    \begin{equation*}
       k
       =
       \sum_{i=1}^{k} w_{i}
       \ge
       \sum_{i=1}^{k} S_{i}
       \ge
       \sum_{i=1}^{k} i,
    \end{equation*}
    which, together with $k \ge 1$ (because $w_{1} = 1$), implies that $k = 1$.  
    Thus, $\card{\I} \ge n-1$.  
    Now, since $b_{1} = 1$, the set $\I$ does not contain the subset $I_{2}$ because $\card{I_2} = n-1$, so $\I$ must contain all $n-2$ subsets $I_{3}, \dotsc, I_{n}$.%
    \footnote{%
        The claim that there are exactly $n-2$ such subsets is the point at which this argument fails in the degenerate case $n
        = 1 = b_{1}$.
    } %
    Hence, $\I = \setof{[n], I_{3}, \dotsc, I_{n}}$.  
    Subtracting the \eqref{x-parking}~equation for $I_{3}$ from the \eqref{x-parking}~equation for~$[n]$ and applying $w_{1} = 1 = b_{1}$ yields $w_{2} = S_{2}$.  
    For $3 \le j \le n-1$, subtract the \eqref{x-parking}~equation for~$I_{j+1}$ from the equation for~$I_{j}$ to get that $w_{j} = S_{j}$.  
    Finally, the $I_{n}$ equation itself states that $w_{n} = S_{n}$.  
    Thus, we have again found that $\bw = \y_{k} \in \Xn$ and that $\bw$ satisfies precisely $k + \card{\I} = 1 + 1 + (n-2) = n$ of the inequalities in \cref{subthm: facet-defining inequalities} with equality.
      
    Since, in both cases, $\bw \in \Xn$, we now have that $P(\bb) = \Xn$.  
    We have also found that every vertex of $P(\bb)$ equals $\pi(\y_{k})$ for some $\pi \in \SSS$ and $0 \le k \le n$, so we have now proved that every vertex of $\Xn$ has this same form.  
    In addition, we have found that the number of inequalities in \cref{subthm: facet-defining inequalities} that the vertex $\bw$ satisfies with equality is precisely $n$, so $\Xn$ is simple (proving \cref{subthm: Xn is simple}) and moreover no inequality given in \cref{subthm: facet-defining inequalities} that supports $\Xn$ is redundant.
    In particular, since every point of the form $\y_{k}$ with $0 \le k \le n$ satisfies precisely~$n$ of the inequalities in \cref{subthm: facet-defining inequalities} with equality, it follows that every such point is a vertex of $\Xn$.  
    This completes the proof of \cref{subthm: vertices of Xn}.
   
    The final claim is that this is a minimal system of inequalities. 
    Since $\Xn$ is simple, it remains only to show that every inequality in this system supports $\Xn$.  
    Of course, the inequalities of the form~\mbox{$x_{i} \ge 1$} are all satisfied with equality by the point $(1, \dotsc, 1) \in \Xn$.  
    For the \eqref{x-parking}~inequalities, let $I \subseteq [n]$ be given, and let $k \deftobe n - \card{I}$.  
    Then $\y_{k}$ satisfies the \eqref{x-parking}~inequality for $I_{+}$ with equality.  
    Let $\pi \in \SSS$ be any permutation under which the image of $I_{+}$ is $I$.  
    Then $\pi(\y_{k})$ is a point in $\Xn$ that satisfies the \eqref{x-parking}~inequality for~$I$ itself with equality.
\end{proof}

As a corollary of \cref{subthm: vertices of Xn}, we enumerate the vertices of $\Xn$.  
When $b_1=1$ (and so $\y_{0} = \y_{1}$), the number of vertices is the same as in the case of the classical parking-function polytope~$\PF_n$.
However, when $b_1 \ge 2$, an additional $n!$ vertices come into play.

\begin{corollary}\label{cor:number_vertices}
   If $b_{1} = 1$, then the number of vertices of $\Xn$ is $n!\parens{\frac{1}{1!}+\dotsb+\frac{1}{n!}}$. 
   Otherwise, if $b_{1} \ge 2$, then the number of vertices of $\Xn$ is $n!\parens{\frac{1}{0!}+\frac{1}{1!}+\dotsb+\frac{1}{n!}}$.
\end{corollary}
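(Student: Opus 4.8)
The plan is to apply \cref{subthm: vertices of Xn}, which identifies the vertex set of $\Xn$ with the union of the $\SSS$-orbits of the points $\y_{0}, \y_{1}, \dotsc, \y_{n}$. So it remains to count how many distinct points lie in this union, and I would do this by an orbit--stabilizer computation for each $\y_{k}$, together with a disjointness argument across the different values of $k$.

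First I would determine the stabilizer of $\y_{k}$ under the $\SSS$-action. Since $b_{i} > 0$ for all $i$, the partial sums satisfy $1 \le S_{1} < S_{2} < \dotsb < S_{n}$, so the entries $S_{k+1}, \dotsc, S_{n}$ of $\y_{k} = (1, \dotsc, 1, S_{k+1}, \dotsc, S_{n})$ are pairwise distinct. Moreover, whenever $k \ge 1$, or $k = 0$ and $b_{1} \ge 2$, these entries are all different from $1$ as well: if $k \ge 1$ then $S_{j} \ge j \ge k + 1 \ge 2$ for $j \ge k+1$, while $S_{1} = b_{1} \ge 2$ in the remaining case. In each of these cases, the only coordinates of $\y_{k}$ that coincide are its $k$ leading $1$'s, so the stabilizer of $\y_{k}$ is the subgroup of $\SSS$ fixing the last $n-k$ positions and permuting the first $k$ arbitrarily, which has order $k!$. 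By orbit--stabilizer, $\card{\SSS \cdot \y_{k}} = n!/k!$.

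Next I would use that the number of coordinates equal to $1$ is constant on each $\SSS$-orbit, and that, by the same inequalities, $\y_{k}$ has exactly $k$ coordinates equal to $1$ whenever $k \ge 1$, or $k = 0$ and $b_{1} \ge 2$. Hence the orbits $\SSS \cdot \y_{k}$ for these values of $k$ are pairwise disjoint. If $b_{1} \ge 2$, this covers all of $k = 0, 1, \dotsc, n$, giving $n + 1$ disjoint orbits and a total of $\sum_{k=0}^{n} n!/k! = n!\parens{\tfrac{1}{0!} + \tfrac{1}{1!} + \dotsb + \tfrac{1}{n!}}$ vertices. If $b_{1} = 1$, then $\y_{0} = \y_{1} = (1, S_{2}, \dotsc, S_{n})$, as observed after \cref{prop: Xn is n-dimensional}, so the distinct points among $\y_{0}, \dotsc, \y_{n}$ are $\y_{1}, \y_{2}, \dotsc, \y_{n}$; the preceding paragraph still yields $\card{\SSS \cdot \y_{k}} = n!/k!$ and pairwise disjointness for $k = 1, \dotsc, n$, so the vertex count is $\sum_{k=1}^{n} n!/k! = n!\parens{\tfrac{1}{1!} + \dotsb + \tfrac{1}{n!}}$.

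No step here is genuinely difficult; the only points requiring care are the degenerate coincidence $\y_{0} = \y_{1}$ when $b_{1} = 1$ and the verification that $S_{j} \neq 1$ for $j > k$ in the other cases, both of which are immediate from the strict chain $1 \le S_{1} < S_{2} < \dotsb < S_{n}$.
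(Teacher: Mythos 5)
Your proof is correct and follows essentially the same route as the paper: both start from \cref{subthm: vertices of Xn} and count the distinct permutations of each $\y_{k}$, obtaining $n!/k!$ per orbit and handling the coincidence $\y_{0}=\y_{1}$ when $b_{1}=1$. The paper phrases the count of each orbit via a bijection with pairs $(I,\sigma)$ rather than orbit--stabilizer, and leaves the cross-$k$ disjointness implicit, but these are cosmetic differences.
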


\begin{proof}
    When $b_{1} \ge 2$, or when $b_{1} = 1$ and $k \ge 2$, each vertex~$\bv$ of $\Xn$ of the form $\pi(\y_{k})$ ($\pi \in \SSS$) corresponds to the unique pair $(I, \sigma)$ of the $(n-k)$-subset $I \subseteq [n]$ of entries of~$\bv$ that do not equal $1$ and the bijection $\sigma \maps [n] \setminus [k] \to I$ that gives the position $\pi(j)$ of the value $S_{j}$ in $\bv$ for $k+1 \le j \le n$.  
    On the other hand, when $b_{1} = 1$ and $k \in \setof{0, 1}$, so that $\y_{0} = \y_{1}$, each vertex of the form $\pi(\y_{k})$ corresponds to the unique permutation in $\SSS$ (namely, $\pi$) that gives the position $\pi(j)$ of the value~$S_{j}$ in $\bv$ for $1 \le j \le n$.  
    Thus, when $b_{1} = 1$, the number of vertices is $n!  + \sum_{k = 2}^{n} \binom{n}{n-k} (n-k)!$, while, when $b_{1} \ge 2$, the number of vertices is $\smash{\sum_{k=0}^{n} \binom{n}{n-k} (n-k)!}$.
\end{proof}

From the characterization of the vertices and facets of $\Xn$, it is straightforward to describe the facets that contain a given nondecreasing vertex $\y_{k}$.  
To this end, define the following notation:
\begin{alignat*}{2}
    L_{j} 
    &\deftobe%
    \bigbraces{(x_{1}, \dotsc, x_{n}) \in \Xn \sst x_{j} = 1} &&
    \qquad \text{for $1 \le j \le n$,} \\
    U_{j}
    &\deftobe%
    \bigbraces{(x_{1}, \dotsc, x_{n}) \in \Xn \sst \textstyle\sum_{i =
    j}^{n} x_{i} = \sum_{i = j}^{n} S_{i}} && \qquad \text{for $1 \le
    j \le n$, with $j \ne 2$ if $b_{1} = 1$.}
\end{alignat*}
Thus, $L_{j}$ is the facet of $\Xn$ defined by the inequality $x_{j} \ge 1$, and $U_{j}$ is the facet of $\Xn$ defined by the \eqref{x-parking}~inequality for the subset $I = I_{j} \deftobe \setof{j, j+1, \dotsc, n}$ (where $j \ne 2$ if $b_{1} = 1$).  
In particular, it is routine to check that the facets that contain the vertex $\y_{k}$ are as follows.

\begin{lemma}
    \label{lem:facets containing a vertex}
    \mbox{}
    \begin{enumerate}[label=(\alph*), ref={\thetheorem(\alph*)}]
        \item  
        \label[sublem]{sublem: facets b1 ge 2 or b1 is 1 and k ge 2}%
        If $b_{1} \ge 2$, or if $b_{1} = 1$ and $k \ge 2$, then the $n$ distinct facets that contain $\y_{k}$ are
        \begin{equation*}
            L_{1}, L_{2}, \dotsc, L_{k}, U_{k+1}, U_{k+2}, \dotsc, U_{n}.
        \end{equation*}
        
        \item  
        \label[sublem]{sublem: facets b1 is 1 and k is 1}%
        If $b_{1} = 1$ and $k \in \setof{0, 1}$ (\emph{i.e., $\y_k = \y_0 = \y_1$}), then the $n$ distinct facets that contain $\y_{k}$ are
        \begin{equation*}
            L_{1}, U_{1}, U_{3}, U_{4}, \dotsc, U_{n}.
        \end{equation*}
    \end{enumerate}
\end{lemma}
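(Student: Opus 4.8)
The plan is to verify directly, by substitution, which of the facet-defining inequalities of \cref{subthm: facet-defining inequalities} are tight at $\y_{k}$, using the explicit coordinates $\y_{k} = (1, \dotsc, 1, S_{k+1}, \dotsc, S_{n})$ and the fact (from \cref{subthm: vertices of Xn}) that $\y_{k}$ is a vertex of $\Xn$. First I would record the number of coordinates of $\y_{k}$ equal to $1$: since $1 = S_{1} < S_{2} < \dotsb$, this number is exactly $k$ when $b_{1} \ge 2$ or when $b_{1} = 1$ and $k \ge 2$, whereas when $b_{1} = 1$ and $k \in \setof{0, 1}$ we have $\y_{0} = \y_{1} = (1, S_{2}, \dotsc, S_{n})$, which has exactly one coordinate equal to $1$.

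Next I would test the two families of facets against $\y_{k}$. The inequality $x_{j} \ge 1$ defining $L_{j}$ is tight at $\y_{k}$ exactly when the $j$\textsuperscript{th} coordinate of $\y_{k}$ is $1$, which by the previous step singles out $L_{1}, \dotsc, L_{k}$ in the first situation and only $L_{1}$ in the second. For the facet $U_{j}$, defined by $\sum_{i=j}^{n} x_{i} = \sum_{i=j}^{n} S_{i}$, I would split $\sum_{i=j}^{n} (\y_{k})_{i}$ at index $k$: if $j \ge k+1$ the two sides agree termwise and $\y_{k} \in U_{j}$; if $j \le k$ then $\sum_{i=j}^{n} (\y_{k})_{i} = (k - j + 1) + \sum_{i=k+1}^{n} S_{i}$, so $\y_{k} \in U_{j}$ precisely when $\sum_{i=j}^{k} S_{i} = k - j + 1$, and since each $S_{i} \ge 1$ with the $S_{i}$ strictly increasing, this forces $S_{i} = 1$ for $j \le i \le k$, hence $j = k = 1$ and $b_{1} = 1$.

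Assembling these facts yields the statement. When $b_{1} \ge 2$, or when $b_{1} = 1$ and $k \ge 2$, the facets containing $\y_{k}$ are $L_{1}, \dotsc, L_{k}$ together with $U_{k+1}, \dotsc, U_{n}$ (and none of the latter is the index $j = 2$ that is excluded when $b_{1} = 1$, since there $k + 1 \ge 3$). When $b_{1} = 1$ and $k \in \setof{0, 1}$, the facets containing $\y_{k}$ are $L_{1}$ together with every admissible $U_{j}$, namely $U_{1}$ (the facet for $I = [n]$, which is still present when $b_{1} = 1$) and $U_{3}, U_{4}, \dotsc, U_{n}$, while $U_{2}$ is absent because $\card{I_{2}} = n - 1$. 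In both cases the count is exactly $n$, in agreement with \cref{subthm: Xn is simple}. There is no genuine obstacle in the argument; the only thing needing care is the bookkeeping in the $b_{1} = 1$ case, where $U_{2}$ is not a facet and $\y_{0} = \y_{1}$ has just one coordinate equal to $1$, so that $U_{1}$ must be counted among the facets through $\y_{0}$ in order to account for all $n$ of them.
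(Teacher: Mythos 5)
Your proof is correct and follows essentially the same route as the paper's: a direct substitution of $\y_{k}$ into the facet-defining inequalities of \cref{subthm: facet-defining inequalities}. The only difference is that the paper invokes simplicity (\cref{subthm: Xn is simple}) to skip checking that no \emph{other} facets contain $\y_{k}$, whereas you verify that converse direction explicitly (correctly handling the exceptional tightness of $U_{1}$ when $j=k=1$ and $b_{1}=1$), which makes your argument slightly more self-contained at the cost of a little extra bookkeeping.
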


\begin{proof}
    Since $\Xn$ is simple and $n$-dimensional, it suffices to confirm that the $n$ given facets contain~$\y_{k}$.
\end{proof}

We will now use \cref{lem:facets containing a vertex} to describe the precise conditions under which two vertices of~$\Xn$ share an edge.
An informal summary of the conditions is as follows: Two vertices $\bv$ and $\bw$ share an edge if and only if either
\begin{enumerate}
    \item 
    the vertices are the same, except that the minimum entry $>1$ in one vertex is a $1$ in the other vertex; or

    \item 
    the vertices are the same, except that two entries of the form $S_{j-1}$ and $S_j$ swap positions.
\end{enumerate}
In addition, we find that the edge vectors are all parallel to roots in the root~system~$B_n$, with lengths given directly by the entries of the vector $\bb$ defining $\Xn$ or by their partial sums.

\begin{proposition}\label{prop:edges}
    Let $\bv$ and $\bw$ be vertices of $\Xn$.
    Fix $\pi \in \SSS$ and $k \in \Z$ with $0 \le k \le n$ such that $\bv = \pi(\y_{k})$.
    \begin{enumerate}[label=(\alph*), ref={\thetheorem(\alph*)}]
        \item
        \label[subprop]{subprop: edges b1 ge 2 or b1 is 1 and k ge 2}%
        If $b_{1} \ge 2$, or if $b_{1} = 1$ and $k \ge 2$, then $\bv$ and $\bw$ share an edge of $\Xn$ if and only if either
        \begin{enumerate}[label=(\arabic*)]
            \item  
            $\bw = (\pi \circ (j,k))(\y_{k-1})$ for some $1 \le j \le k$, in which case \[\bw - \bv = (S_{k} - 1) \ee_{\pi(j)}\] (informally, the vertices are the same, except that one of the $1$ entries in $\bv$ is $S_{k}$ in $\bw$); or
            
            \item  
            $\bw = (\pi \circ (j-1,j))(\y_{k})$ for some $k + 2 \le j \le n$, in which case \[\bw - \bv = b_{j}(\ee_{\pi(j-1)} - \ee_{\pi(j)})\] (informally, the vertices are the same, except that entries $S_{j-1}$ and $S_{j}$ swap positions);%
            \footnote{%
                When $b_{1} = 1$ and $k \in \setof{0,1}$ (so that
                $\y_{k} = \y_{0} = \y_{1}$), we must use $k = 0$ for
                this equation to hold.
            } %
            or

            \item  
            $k \le n-1$ and $\bw = \pi(\y_{k+1})$, in which case \[\bw - \bv = - (S_{k+1} - 1) \ee_{\pi(k+1)}\] (informally, the vertices are the same, except that the $S_{k+1}$ entry in $\bv$ is a $1$ in $\bw$).%
            \footnote{%
                When $b_{1} = 1$ and $k \in \setof{0,1}$ (so that
                $\y_{k} = \y_{0} = \y_{1}$), we must use $k = 1$ for
                this equation to hold.
            } %
        \end{enumerate}

        \item  
        \label[subprop]{subprop: edges b1 is 1 and k is 1}%
        If $b_{1} = 1$ and $k \in \setof{0, 1}$ (\emph{i.e., $\y_k = \y_0 = \y_1$}), then $\bv$ and $\bw$ share an edge of $\Xn$ if and only if either
        \begin{enumerate}[label=(\arabic*)]
            \item  
            $\bw = (\pi \circ (j-1,j))(\y_{1})$ for some $2 \le j \le n$, in which case \[\bw - \bv = b_{j}(\ee_{\pi(j-1)} - \ee_{\pi(j)})\] (informally, the vertices are the same, except that entries $S_{j-1}$ and $S_{j}$ swap
            positions); or
            
            \item  
            $\bw = \pi(\y_{2})$,
            in which case \[\bw - \bv = -b_{2} \ee_{\pi(2)}\] (informally, the vertices are the same, except that the $S_{2}$ entry in $\bv$ is a $1$ in $\bw$).
        \end{enumerate}
    \end{enumerate}
\end{proposition}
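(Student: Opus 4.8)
The plan is to leverage the fact, established in \cref{subthm: Xn is simple}, that $\Xn$ is a simple $n$-dimensional polytope, together with the explicit list of facets through each vertex from \cref{lem:facets containing a vertex}. In a simple $n$-polytope, the vertex figure at any vertex $\bv$ is an $(n-1)$-simplex; equivalently, if $F_1, \dots, F_n$ are the $n$ facets through $\bv$, then for each $i$ the intersection $\bigcap_{j \neq i} F_j$ is an edge, its second endpoint is the unique vertex $\bw \neq \bv$ lying on every $F_j$ except possibly $F_i$, and every edge incident to $\bv$ arises in exactly this way (see, e.g., \cite{Ziegler}). Thus it suffices to: (i) recall the $n$ facets through $\bv = \pi(\y_k)$ from \cref{lem:facets containing a vertex}; (ii) for each such facet $F$, produce the candidate second endpoint $\bw$ named in the statement and verify that $\bw$ is a vertex of $\Xn$ (i.e., of the form $\sigma(\y_\ell)$), that $\bw \neq \bv$, and that $\bw$ lies on each of the other $n-1$ facets through $\bv$; and (iii) check that the $n$ candidates so produced are pairwise distinct, which forces them to be exactly the $n$ neighbors of $\bv$, and then record $\bw - \bv$ in each case.

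Step (ii) is bookkeeping with the facet descriptions $L_m$ and $U_m$. Consider \cref{subprop: edges b1 ge 2 or b1 is 1 and k ge 2}, and suppose first $b_1 \ge 2$ (or $b_1 = 1$, $k \ge 2$), so that the facets through $\bv = \pi(\y_k)$ are the images under $\pi$ of $L_1, \dots, L_k, U_{k+1}, \dots, U_n$. Each family in the statement corresponds to dropping one kind of facet: dropping (the image of) $L_j$, $1 \le j \le k$, turns the coordinate equal to $1$ in position $\pi(j)$ into the smallest nontrivial value $S_k$, leaving the remaining $k-1$ ones and the values $S_{k+1}, \dots, S_n$ untouched, so one checks that the result is $(\pi \circ (j,k))(\y_{k-1})$ and that $\bw - \bv = (S_k - 1)\ee_{\pi(j)}$; dropping (the image of) $U_j$, $k + 2 \le j \le n$, transposes the values $S_{j-1}$ and $S_j$, giving $(\pi \circ (j-1,j))(\y_k)$ with $\bw - \bv = b_j(\ee_{\pi(j-1)} - \ee_{\pi(j)})$; and dropping (the image of) $U_{k+1}$ turns the value $S_{k+1}$ in position $\pi(k+1)$ into $1$, giving $\pi(\y_{k+1})$ (so this case occurs only when $k \le n-1$) with $\bw - \bv = -(S_{k+1} - 1)\ee_{\pi(k+1)}$. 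In each instance, ``$\bw$ lies on the other $n-1$ facets'' is a direct substitution into the defining equations of $L_m$ and $U_m$, using only $1 \le S_1 < S_2 < \cdots < S_n$ and the explicit form of $\y_\ell$. Part \cref{subprop: edges b1 is 1 and k is 1} is the same computation starting from the facet list $L_1, U_1, U_3, U_4, \dots, U_n$ for $\bv = \pi(\y_0) = \pi(\y_1)$; here families (1)--(3) of part (a) merge because $\y_0 = \y_1$, and the $n$ neighbors turn out to be $(\pi \circ (j-1,j))(\y_1)$ for $2 \le j \le n$ together with $\pi(\y_2)$, with the stated edge vectors. That $b_1 = 1$ must be treated separately is not mysterious: the facet $U_2$ appears in the description of $\Xn$ precisely when $b_1 \ge 2$, so the facet list through $\y_0 = \y_1$ genuinely differs in the two cases.

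There is no deep obstacle; the proof is a finite verification. The one point requiring care is in step (ii): one must confirm that each proposed local move (raising a $1$ to $S_k$, swapping consecutive partial sums $S_{j-1} \leftrightarrow S_j$, or lowering an $S_{k+1}$ to $1$) is tight on exactly the correct $n-1$ of the $n$ facets through $\bv$ and is strict on the dropped one---equivalently, that each candidate neighbor has been matched with the facet it omits. Once this is checked for all $n$ candidates in both cases, step (iii) is immediate: the candidates in family (1) have $k-1$ coordinates equal to $1$, those in family (2) have $k$, and the one in family (3) has $k+1$, and within a family the positions of the distinguished values differ, so the $n$ candidates are pairwise distinct; since a vertex of a simple $n$-polytope has exactly $n$ neighbors, these are all of them, and the displayed formulas for $\bw - \bv$ finish the proof.
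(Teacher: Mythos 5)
Your proposal is correct and follows essentially the same route as the paper's proof: both rely on the simplicity of $\Xn$ together with the facet lists of \cref{lem:facets containing a vertex}, match each candidate neighbor of $\pi(\y_k)$ to the unique facet it omits, and conclude by counting that the $n$ candidates exhaust all neighbors. Your explicit distinctness check in step (iii) is a small point the paper leaves implicit, but the argument is the same.
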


\begin{proof}
    For both parts of the \lcnamecref{prop:edges} to be proved, note that we will be done once we have proved the ``if'' direction, since we will then have found $n$ vertices that are adjacent to $\bv$, which must be all of the vertices that are adjacent to $\bv$, because $\Xn$ is simple and of dimension $n$ by \cref{subthm: Xn is simple,prop: Xn is n-dimensional}.  
    Thus, the ``only if'' direction will follow immediately.  
    Note also that the calculation of the difference vector $\bw - \bv$ under each condition is straightforward.

    To prove the ``if'' direction of \cref{subprop: edges b1 ge 2 or b1 is 1 and k ge 2}, suppose that $b_{1} \ge 2$, or $b_{1} = 1$ and $k \ge 2$, as in \cref{sublem: facets b1 ge 2 or b1 is 1 and k ge 2}.  
    Thus, the facets that contain $\y_{k}$ are
    \begin{equation}
        L_{1}, L_{2}, \dotsc, L_{k}, U_{k+1}, U_{k+2}, \dotsc, U_{n}.
        \label{list: facets containing a vertex}
    \end{equation}
    By the $\SSS$ symmetry of $\Xn$, the vertices $\bv$ and $\bw$ share an edge if and only if $\pi^{-1}(\bw)$ lies on all but one of these facets.  
    Now, if $\bw = (\pi \circ (j, k))(\y_{k-1})$ for some $1 \le j \le k$, then $\pi^{-1}(\bw) = (j, k)(\y_{k-1})$ lies on all facets in~\eqref{list: facets containing a vertex} except~$L_{j}$.  
    If $\bw = (\pi \circ (j-1,j))(\y_{k})$ for some $k + 2 \le j \le n$, then~$\pi^{-1}(\bw) = (j-1, j)(\y_{k})$ lies on all facets in~\eqref{list: facets containing a vertex} except~$U_{j}$.  
    Finally, if $k \le n-1$ and $\bw = \pi(\y_{k+1})$, then $\pi^{-1}(\bw) = \y_{k+1}$ lies on all facets in~\eqref{list: facets containing a vertex} except~$U_{k+1}$.

    To prove the ``if'' direction of \cref{subprop: edges b1 is 1 and k is 1}, suppose that $b_{1} = 1$ and $k \in \setof{0, 1}$, as in \cref{sublem: facets b1 is 1 and k is 1}.  
    Thus, the facets that contain $\y_{k} = \y_0 = \y_1$ are
    \begin{equation}
        L_{1}, U_{1}, U_{3}, U_{4}, \dotsc, U_{n}.
        \label{list: facets containing a vertex2}
    \end{equation}
    If $\bw = (\pi \circ (1,2))(\y_{0})$, then~$\pi^{-1}(\bw) = (1, 2)(\y_{0})$ lies on all facets in~\eqref{list: facets containing a vertex2} except~$L_{1}$.  
    If $\bw = (\pi \circ (j-1,j))(\y_{0})$ for some $3 \le j \le n$, then~$\pi^{-1}(\bw) = (j-1, j)(\y_{0})$ lies on all facets in~\eqref{list: facets containing a vertex2} except~$U_{j}$.  
    Finally, if $\bw = \pi(\y_{2})$, then $\pi^{-1}(\bw) = \y_{2}$ lies on all facets in~\eqref{list: facets containing a vertex2} except~$U_{1}$.
\end{proof}

Recall that, given a vertex $\bv$ of a polytope $P$, the \defing{tangent cone} $\Tan{\bv}(P)$ at $\bv$ is the cone generated by the edge vectors pointing from $\bv$ to its neighbors in the edge graph of $P$.  
That is,
\begin{equation*}
    \Tan{\bv}(P) 
    \deftobe 
    \Bigg\lbrace
        \sum_{%
            \substack{%
                \bw \in \Ver(P) \sst \\
                \bw \adjto \bv
            }%
        }%
        \lambda_{\bw}(\bw - \bv) 
        \sst
        \text{%
            $\lambda_{\bw} \ge 0$ for all vertices
            $\bw \adjto \bv$
        }%
    \Bigg\rbrace,%
\end{equation*}
where we write $\bw \adjto \bv$ when $\bw$ and $\bv$ are adjacent vertices of $P$. 
As a corollary of \cref{prop:edges}, we get an explicit description of the tangent cone at each vertex of $\Xn$.

\begin{corollary}
\label{cor: generators of tangent cone of Xn}
    Let $\bv$ be a vertex of $\Xn$.  
    Fix $\pi \in \SSS$ and $k \in \Z$ with $0 \le k \le n$ such that $\bv = \pi(\y_{k})$.  
    Let $\Tan{\bv} \deftobe \Tan{\bv}(\Xn)$ be the tangent cone at $\bv$.
    \begin{enumerate}[label=(\alph*), ref={\thetheorem(\alph*)}]
        \item  
        Suppose that $b_{1} \ge 2$, or that $b_{1} = 1$ and $k \ge 2$.
        Then $\Tan{\bv}$ is generated by
        \begin{equation*}
            \setof{\ee_{\pi(j)} \sst 1 \le j \le k} 
            \quad
            \cup 
            \quad
            \setof{-\ee_{\pi(k+1)}}
            \quad
            \cup 
            \quad
            \setof{\ee_{\pi(j-1)} - \ee_{\pi(j)} \sst k+2 \le j \le n}
        \end{equation*}
        if $k \le n-1$, and by
        $
            \setof{\ee_{j} \sst 1 \le j \le n} 
        $
        if $k = n$.

        \item  
        Suppose that $b_{1} = 1$ and $k \in \setof{0, 1}$.  
        Then $\Tan{\bv}$ is generated by
        \begin{equation*}
            \setof{\ee_{\pi(j-1)} - \ee_{\pi(j)} \sst 2 \le j \le n}
            \quad
            \cup
            \quad
            \setof{-\ee_{\pi(2)}}.
        \end{equation*}

    \end{enumerate}
\end{corollary}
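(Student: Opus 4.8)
The plan is to read the generators of $\Tan{\bv}$ off directly from \cref{prop:edges}. By definition, $\Tan{\bv}(\Xn)$ is the cone generated by the vectors $\bw - \bv$ as $\bw$ ranges over the vertices of $\Xn$ adjacent to $\bv$; and, since $\Xn$ is simple of dimension $n$ (\cref{subthm: Xn is simple,prop: Xn is n-dimensional}), there are exactly $n$ such $\bw$, each described in \cref{prop:edges} together with the difference vector $\bw - \bv$. Replacing a generator $\bw - \bv$ by any positive scalar multiple of itself leaves unchanged the cone it helps generate, so the entire argument reduces to checking that the scalar factors appearing in \cref{prop:edges} are positive.

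For part (a) with $k \le n-1$, \cref{subprop: edges b1 ge 2 or b1 is 1 and k ge 2} gives the $n$ edge vectors at $\bv = \pi(\y_{k})$ as $(S_{k} - 1)\,\ee_{\pi(j)}$ for $1 \le j \le k$, the single vector $-(S_{k+1} - 1)\,\ee_{\pi(k+1)}$, and $b_{j}(\ee_{\pi(j-1)} - \ee_{\pi(j)})$ for $k+2 \le j \le n$. Each $b_{j}$ is positive since $\bb \in \Z_{>0}^{n}$. The scalars $S_{k} - 1$ (which is only relevant when $k \ge 1$) and $S_{k+1} - 1$ are positive because the hypothesis of part (a) forces $S_{k} \ge 2$ and $S_{k+1} \ge 2$: indeed, if $b_{1} \ge 2$ then $S_{i} \ge b_{1} \ge 2$ for every $i \ge 1$, while if $b_{1} = 1$ and $k \ge 2$ then $S_{i} \ge i \ge 2$ for $i \ge k$. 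Rescaling each edge vector by the reciprocal of its (positive) scalar yields precisely the claimed generating set.

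For part (a) with $k = n$, we have $\bv = \pi(\y_{n}) = (1, \dotsc, 1)$, and items (2) and (3) of \cref{subprop: edges b1 ge 2 or b1 is 1 and k ge 2} become vacuous, so the $n$ neighbors of $\bv$ are the points $(\pi \circ (j, n))(\y_{n-1})$ for $1 \le j \le n$, with edge vectors $(S_{n} - 1)\,\ee_{\pi(j)}$. As $j$ runs over $[n]$ the index $\pi(j)$ runs over all of $[n]$, and $S_{n} - 1 > 0$ as above, so after rescaling we obtain the generating set $\setof{\ee_{j} \sst 1 \le j \le n}$. Part (b) is handled the same way: by \cref{subprop: edges b1 is 1 and k is 1}, the $n$ edge vectors at $\bv = \pi(\y_{k})$ (with $k \in \setof{0,1}$, so $\y_{k} = \y_{0} = \y_{1}$) are $b_{j}(\ee_{\pi(j-1)} - \ee_{\pi(j)})$ for $2 \le j \le n$ together with the single vector $-b_{2}\,\ee_{\pi(2)}$; dividing out the positive factors $b_{j}$ gives the asserted generators.

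There is no genuine obstacle here beyond bookkeeping. The only points that require any care are the elementary verification that the scalars $S_{k}-1$ and $S_{k+1}-1$ in part (a) are strictly positive—so that passing from edge vectors to the corresponding rays is legitimate—and the observation that the case $k = n$ must be treated separately, since then two of the three families of adjacent vertices listed in \cref{prop:edges} are empty.
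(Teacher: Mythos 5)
Your proof is correct and follows exactly the route the paper intends: the corollary is stated as an immediate consequence of \cref{prop:edges}, obtained by rescaling each edge vector $\bw-\bv$ by its positive scalar factor, and your verification that the factors $S_k-1$, $S_{k+1}-1$, and $b_j$ are strictly positive under the stated hypotheses is the only detail that needed checking.
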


\subsection{$\Xn$ is a generalized permutahedron}

Generalized permutahedra (sometimes spelled \emph{permutohedra}) form a widely studied class of polytopes with a rich theory and many interesting subclasses.  
One definition of a generalized permutahedron is that it is a polyhedron whose normal fan is a coarsening of the braid arrangement.
For sources on generalized permutahedra, we recommend \cite{doker2011geometry, JockemkoRavichandran, Pos, PRW}.
We use an equivalent characterization.

\begin{definition} 
A polytope is a \emph{generalized permutahedron} if and only if all edges are parallel to $\ee_i-\ee_j$ for some standard basis vectors $\ee_i$ and $\ee_j$.
\end{definition}

We show that the lifting of $\Xn$ is a generalized permutahedron, from which it, and $\Xn$ itself, immediately inherit a variety of combinatorial and geometric results and properties.  
Let $B \deftobe \sum_{k=1}^n S_{k}$, and let $\ell \maps \R^{n} \to \R^{n+1}$ be the affine-linear map that ``vertically projects'' $\R^{n}$ onto the hyperplane $H=\{\mathbf{x} \in \R^{n+1}: x_{1} + \dotsb + x_{n} + x_{n+1} = B\}$ via $(x_{1}, \dotsc, x_{n}) \mapsto (x_{1}, \dotsc, x_{n}, B - \sum_{i=1}^{n} x_{i})$. 
Using this mapping, Proposition \ref{prop:lifted_vertex_set_of_Xn} below follows.

\begin{definition}
Let $\Xnb \deftobe \{\ell(\mathbf{x}) : \mathbf{x}\in\Xn\}$ be the lifting of $\Xn$ to the hyperplane $H$. 
We call $\Xnb$ the \defing{lifted $\bb$-parking-function polytope}.   
\end{definition}

\begin{proposition}\label{prop:lifted_vertex_set_of_Xn}
    The vertex set of $\Xnb$ is $\{\ell(\bv) \sst \text{$\bv$ is a vertex of $\Xn$}\}$.  
    Furthermore, the minimal inequality description of $\Xnb$ is the minimal inequality description of $\Xn$ together with the additional equality $x_1 + \dotsb + x_n + x_{n+1} = B$.
\end{proposition}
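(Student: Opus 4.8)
The plan is to reduce the whole statement to the elementary fact that $\ell$ is an \emph{affine-linear isomorphism} from $\R^{n}$ onto the hyperplane $H$: it is injective, and its inverse is the restriction to $H$ of the coordinate projection $\R^{n+1}\to\R^{n}$, $(x_{1},\dotsc,x_{n+1})\mapsto(x_{1},\dotsc,x_{n})$, which is again affine-linear. First I would invoke the standard fact (see, e.g., \cite{Ziegler}) that an affine-linear isomorphism carries polytopes to polytopes and induces an isomorphism of their face lattices. Applied to $\ell$ and $\Xn$, this gives at once that $\Xnb=\ell(\Xn)$ is a polytope whose faces are exactly the $\ell$-images of the faces of $\Xn$; taking faces of dimension $0$ yields the first assertion, namely that the vertices of $\Xnb$ are precisely the points $\ell(\bv)$ for $\bv$ a vertex of $\Xn$ (and, combined with \cref{subthm: vertices of Xn}, an explicit vertex list).

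For the inequality description I would proceed in two steps. Since $\Xnb\subseteq H$ by construction, every point of $\Xnb$ satisfies $x_{1}+\dotsb+x_{n}+x_{n+1}=B$; and because $\dim\Xnb=\dim\Xn=n$ by \cref{prop: Xn is n-dimensional}, the affine hull of $\Xnb$ is exactly the hyperplane $H$, so this single equation is a minimal affine description of $\aff\Xnb$. For the inequalities, observe that a point $\by=(y_{1},\dotsc,y_{n+1})\in H$ equals $\ell(\mathbf{x})$ for the unique $\mathbf{x}=(y_{1},\dotsc,y_{n})\in\R^{n}$, and that every inequality in the minimal system for $\Xn$ given by \cref{subthm: facet-defining inequalities} --- both the bounds $x_{i}\ge 1$ and the \eqref{x-parking} inequalities --- does not involve the coordinate $x_{n+1}$, so it holds for $\by$ exactly when it holds for $\mathbf{x}$. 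Hence $\by\in\Xnb$ if and only if $\by\in H$ and $\by$ satisfies all those inequalities verbatim; that is, $\Xnb$ is cut out in $\R^{n+1}$ precisely by the equation $x_{1}+\dotsb+x_{n+1}=B$ together with the (unchanged) inequalities of \cref{subthm: facet-defining inequalities}.

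It remains to see that this enlarged system is minimal, and this is where one must be slightly careful, since $\Xnb$ is not full-dimensional: ``minimal inequality description'' here should mean a minimal system of equations cutting out $\aff\Xnb$ (one equation, already handled) together with an irredundant list of facet-supporting inequalities. Because $\ell$ restricts to an affine isomorphism $\R^{n}\to H$ taking $\Xn$ onto $\Xnb$, it induces a bijection between the facets of $\Xn$ and those of $\Xnb$ under which the facet of $\Xn$ supported by a given inequality corresponds to the facet of $\Xnb$ supported by its lifted copy. By \cref{subthm: facet-defining inequalities} the listed inequalities support pairwise distinct facets of $\Xn$ and account for all of them, so their lifts do likewise for $\Xnb$; in particular none of the lifted inequalities becomes redundant even after the equation $x_{1}+\dotsb+x_{n+1}=B$ is adjoined. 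I do not expect any genuine obstacle beyond this bookkeeping, as the substantive content was already established in \cref{thm:inequality_description}.
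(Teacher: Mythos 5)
Your proof is correct and takes the same route the paper (implicitly) does: the paper offers no explicit proof, asserting only that the proposition ``follows'' from the definition of the lifting $\ell$, and your argument --- that $\ell$ is an affine isomorphism onto $H$, hence preserves the face lattice, and that the defining inequalities do not involve $x_{n+1}$ so they lift verbatim with minimality preserved --- is exactly the bookkeeping being elided. No gaps.
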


Building off this combinatorial structure, we have the main result of this subsection.

\begin{theorem} \label{thm: x-park_is_GPerm}
The lifted $\bb$-parking-function polytope $\Xnb$ is a generalized permutahedron.
\end{theorem}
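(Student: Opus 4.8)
The plan is to apply the edge-based characterization of generalized permutahedra directly, using the explicit description of the edges of $\Xn$ from \cref{prop:edges}. First I would record that, because $\ell \maps \R^{n} \to H \subseteq \R^{n+1}$ is an affine isomorphism onto the hyperplane $H$, the lifted polytope $\Xnb = \ell(\Xn)$ is affinely isomorphic to $\Xn$ and hence has the same face lattice; combined with \cref{prop:lifted_vertex_set_of_Xn}, this shows that the edges of $\Xnb$ are exactly the $\ell$-images of the edges of $\Xn$. Consequently the edge vectors of $\Xnb$ are obtained from those of $\Xn$ by applying the linear part $\ell_{0}$ of $\ell$, which acts on standard basis vectors by $\ell_{0}(\ee_{i}) = \ee_{i} - \ee_{n+1}$ for $1 \le i \le n$.

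Next I would go through the edge vectors in \cref{prop:edges}. In every case the edge vector $\bw - \bv$ of $\Xn$ is a nonzero scalar multiple of either a single standard basis vector $\ee_{a}$ (the difference vectors $(S_{k}-1)\ee_{\pi(j)}$ and $-(S_{k+1}-1)\ee_{\pi(k+1)}$ in \cref{subprop: edges b1 ge 2 or b1 is 1 and k ge 2}, and $-b_{2}\ee_{\pi(2)}$ in \cref{subprop: edges b1 is 1 and k is 1}) or a difference $\ee_{a} - \ee_{c}$ of two standard basis vectors (the difference vectors of the form $b_{j}(\ee_{\pi(j-1)} - \ee_{\pi(j)})$). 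Applying $\ell_{0}$: a vector parallel to $\ee_{a}$ is sent to a vector parallel to $\ee_{a} - \ee_{n+1}$, and a vector parallel to $\ee_{a} - \ee_{c}$ is sent to $\ell_{0}(\ee_{a}) - \ell_{0}(\ee_{c}) = (\ee_{a} - \ee_{n+1}) - (\ee_{c} - \ee_{n+1}) = \ee_{a} - \ee_{c}$. Hence every edge of $\Xnb$ is parallel to a difference of two standard basis vectors of $\R^{n+1}$, which is precisely the defining condition for a generalized permutahedron.

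There is no real analytic difficulty here; the only point requiring care is bookkeeping — ensuring that \cref{prop:edges} really enumerates \emph{all} edges of $\Xn$, so that no other edge direction can occur (and, along the way, that the degenerate identification $\y_{0} = \y_{1}$ when $b_{1} = 1$ is handled by the two parts of \cref{prop:edges}). This is immediate: $\Xn$ is simple and $n$-dimensional by \cref{subthm: Xn is simple} and \cref{prop: Xn is n-dimensional}, so every vertex lies on exactly $n$ edges, and for each vertex \cref{prop:edges} exhibits exactly $n$ adjacent vertices; hence the listed edges are all of them, and the computation above applies to each. I would close by remarking that, since $\Xnb$ is affinely isomorphic (indeed unimodularly) to $\Xn$, this also justifies the phrasing in the introduction that $\Xn$ is a generalized permutahedron up to a unimodular linear isomorphism.
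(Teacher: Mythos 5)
Your proposal is correct and follows essentially the same route as the paper: both identify the edges of $\Xnb$ with the $\ell$-images of the edges of $\Xn$ via \cref{prop:lifted_vertex_set_of_Xn}, then use the edge classification of \cref{prop:edges} to check that directions $c\,\ee_{a}$ lift to $c(\ee_{a}-\ee_{n+1})$ while directions $c(\ee_{a}-\ee_{c})$ are unchanged. Your explicit remark that simplicity guarantees \cref{prop:edges} lists \emph{all} edges is a nice touch of care, but the argument is the same.
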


\begin{proof}
Proposition~\ref{prop:lifted_vertex_set_of_Xn} implies that the edges of $\Xn$ lift to edges of $\Xnb$ in the natural way: $\{\mathbf{x},\mathbf{y}\}$ is an edge of $\Xn$ if and only if $\{\ell(\mathbf{x}),\ell(\mathbf{y})\}$ is an edge of $\Xnb$.
If $\{\mathbf{x},\mathbf{y}\}$ is an edge of $\Xn$ with $\mathbf{x}-\mathbf{y} = c(\ee_i-\ee_j)$, then $ \sum_{i=1}^n x_i = \sum_{i=1}^n y_i$. Thus $\ell(\mathbf{x})_{n+1} = B - \sum_{i=1}^n x_i = B - \sum_{i=1}^n y_i$. so $\ell(\mathbf{x})-\ell(\mathbf{y})=c(\ee_i-\ee_j)$.
If $\{\mathbf{x},\mathbf{y}\}$ is an edge of $\Xn$ with $\mathbf{x}-\mathbf{y} = c\ee_j$, then $ \sum_{i=1}^n x_i - \sum_{i=1}^n y_i = c$. Thus, $\ell(\mathbf{x})_{n+1} = B - \sum_{i=1}^n x_i = B - \sum_{i=1}^n y_i - c = \ell(\mathbf{y})_{n+1}-c$, so $\ell(\mathbf{x})-\ell(\mathbf{y}) = c(\ee_j-\ee_{n+1})$. By Proposition~\ref{prop:edges}, all edges of $\Xn$ are of this form.
Hence, $\Xnb$ is a generalized permutahedron.
\end{proof}

We may now apply established results about generalized permutahedra to $\Xnb$. 
One such result is \cite[Corollary 4.8]{JockemkoRavichandran}, which provides a formula for the number of lattice points of a generalized permutahedron.
Since lifting a polytope does not change the number of lattice points of a polytope, by applying the formula we obtain the number of lattice points of $\Xn$. 
Furthermore, we note that the enumeration of $\bb$-parking functions is not completely known \cite{Yan2,Yan}, but by applying the formula of \cite[Corollary 4.8]{JockemkoRavichandran}, we obtain an upper-bound for the number of $\bb$-parking functions since all $\bb$-parking functions will be lattice points in $\Xn$.

Additionally, by a theorem of Backman and Liu \cite{backman2023regular}, \cref{thm: x-park_is_GPerm} implies that $\Xnb$, and hence $\Xn$ itself, admits a regular unimodular triangulation.
For a definition of and further details about regular unimodular triangulations, see \cite{HaasePaffenholzPiechnikSantos}.
We can also express $\Xnb$ as a Minkowski sum of simplices.  
In order to do that, we begin by expressing $\Xnb$ in the notation introduced by Postnikov~\cite{Pos} for generalized permutahedra. 
Following from \cite[Section 6]{Pos}, every generalized permutahedron can be expressed as
\[
    P_n^{\cZ}(\{z_I\})\deftobe
    \left\{(x_1,\ldots,x_n)\in\R^n \sst
    \sum_{i=1}^n x_i=z_{[n]},~\sum_{i\in I}x_i\ge z_{I}~\text{ for any subset }I\subset [n]\right\},
\]
for some collection of real parameters $\{z_I\}$ over subsets $I$ of $[n]$ with $z_{\emptyset}=0$.

\begin{proposition} \label{prop: Z-expression}
    The lifted $\bb$-parking-function polytope $\Xnb$ is the generalized permutahedron $P_{n+1}^{\cZ}(\setof{z_I})$ in which the values of the parameters $z_{I}$ for $\emptyset \ne I \subseteq [n+1]$ are given by
    \begin{equation*}
        z_{I}
        \deftobe
        \begin{dcases*}
            \card{I}
            & if $n+1 \notin I$, \\
            S_{1} + S_{2} + \dotsb + S_{\card{I}-1}
            & if $n+1 \in I$.
        \end{dcases*}
    \end{equation*}
\end{proposition}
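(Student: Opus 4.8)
The plan is to read the minimal inequality description of $\Xnb$ off \cref{prop:lifted_vertex_set_of_Xn} and match it term-by-term against the system defining $P_{n+1}^{\cZ}(\{z_I\})$, sorting the subsets $I \subseteq [n+1]$ according to whether or not they contain the index $n+1$. By \cref{prop:lifted_vertex_set_of_Xn} together with \cref{subthm: facet-defining inequalities}, $\Xnb$ is the set of $(x_1,\dots,x_{n+1}) \in \R^{n+1}$ satisfying $\sum_{i=1}^{n+1} x_i = B$, the bounds $x_i \ge 1$ for $1 \le i \le n$, and the \eqref{x-parking} inequalities $\sum_{i \in K} x_i \le \sum_{i = n - \card{K} + 1}^{n} S_i$ for every nonempty $K \subseteq [n]$ (including the instances that are redundant when $b_1 = 1$, which are nonetheless valid on $\Xn$, as noted after \cref{thm:inequality_description}). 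I would then verify three correspondences.

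For the equality: directly from the stated formula, $z_{[n+1]} = S_1 + \dots + S_{(n+1)-1} = S_1 + \dots + S_n = B$, so the ambient hyperplane of $P_{n+1}^{\cZ}(\{z_I\})$ is exactly $\{\sum_{i=1}^{n+1} x_i = B\}$. For the subsets with $n+1 \notin I$, i.e.\ $\emptyset \ne I \subseteq [n]$: the constraint $\sum_{i \in I} x_i \ge z_I = \card{I}$ is implied by $x_i \ge 1$ for $i \in [n]$, and conversely those bounds are recovered as the singleton instances $I = \{i\}$; so this block of inequalities is equivalent to $\{x_i \ge 1 : i \in [n]\}$ (and $z_\emptyset = 0$ is the standing convention in Postnikov's notation).

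The substantive step is the subsets with $n+1 \in I$. Setting $J \deftobe I \setminus \{n+1\}$ and $K \deftobe [n] \setminus J$, so that $\card{I} = \card{J} + 1 = n - \card{K} + 1$, I would substitute $x_{n+1} = B - \sum_{i=1}^{n} x_i$ (valid on the hyperplane) to obtain $\sum_{i \in I} x_i = \sum_{i \in J} x_i + x_{n+1} = B - \sum_{i \in K} x_i$. Since $z_I = S_1 + \dots + S_{\card{I}-1} = S_1 + \dots + S_{n-\card{K}}$ and $B = S_1 + \dots + S_n$, the inequality $\sum_{i \in I} x_i \ge z_I$ is then equivalent to $\sum_{i \in K} x_i \le S_{n-\card{K}+1} + \dots + S_n = \sum_{i = n-\card{K}+1}^{n} S_i$, which is precisely the \eqref{x-parking} inequality for $K$. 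As $I$ ranges over the subsets of $[n+1]$ containing $n+1$ other than $[n+1]$ itself (which only reproduces the equality), $K$ ranges over all nonempty subsets of $[n]$, so this block is exactly the full family of \eqref{x-parking} inequalities. Combining the three correspondences yields $\Xnb = P_{n+1}^{\cZ}(\{z_I\})$.

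I expect the only delicate point to be the bookkeeping in the last step: tracking the complementation $I \leftrightarrow K$, the index shift $\card{I} - 1 = n - \card{K}$, and the reversal of the inequality caused by eliminating $x_{n+1}$, and confirming that feeding every subset (not just the facet-defining ones) into the $P^{\cZ}$ description neither enlarges nor shrinks the solution set. There is no deeper obstacle — in particular, no supermodularity condition on $\{z_I\}$ needs to be checked by hand, since \cref{thm: x-park_is_GPerm} already guarantees that $\Xnb$ is a generalized permutahedron, and the computation above merely identifies its canonical $z$-parameters.
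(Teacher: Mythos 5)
Your proof is correct and follows essentially the same route as the paper's: both start from the inequality description in \cref{prop:lifted_vertex_set_of_Xn}, use the hyperplane $\sum_{i=1}^{n+1}x_i=B$ to convert each \eqref{x-parking} inequality for $K\subseteq[n]$ into the lower bound $\sum_{i\in I}x_i\ge S_1+\dots+S_{\card{I}-1}$ for the complementary set $I=[n+1]\setminus K\ni n+1$, and then adjoin the redundant inequalities $\sum_{i\in I}x_i\ge\card{I}$ for $I\not\ni n+1$ to match Postnikov's format. The index bookkeeping ($\card{I}-1=n-\card{K}$) and the handling of the edge cases $I=\setof{n+1}$ and $I=[n+1]$ check out.
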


\begin{proof}
    From \cref{prop:lifted_vertex_set_of_Xn}, consider the defining inequalities, 
\[
    \sum_{i\in I}x_i\le S_n+S_{n-1}+\dotsb+S_{n-|I|+1},
\]
for any $I$ with  $|I|=k$, $n+1\notin I$, and $x_1+\ldots+x_{n+1}=\sum_{i=1}^n S_i$. 
Subtracting these two yields the inequality
\[
    \sum_{i\in [n+1]\setminus I}x_i\ge S_1+S_2+\dotsb+S_{|[n+1]\setminus I|-1}.
\]
This shows that $\Xnb$ can be described in terms of the following inequalities:
\[
    \begin{cases}
        \displaystyle x_i\ge 1, &  \text{ for }1\le i \le n,\\
        \displaystyle\sum_{i\in I}x_i\ge \sum_{i=1}^{|I|-1}S_i, &  \text{ for all $I\subset[n+1]$ with }n+1\in I,\\
        \displaystyle\sum_{i=1}^{n+1} x_i=\sum_{i=1}^n S_i, & 
    \end{cases}
\]
since they recover the defining inequalities in \cref{thm:inequality_description}. 
To match with Postnikov's definition of $P_{n+1}^{\cZ}(\{z_I\})$, we can add the following redundant inequalities obtained from $x_i\ge 1$ which have no effect on the solutions of the above system of inequalities:
\[
    \sum_{i\in I}x_i\ge |I| \quad \text{ for all }I\subset[n+1]\text{ s.t. }n+1\notin I.
\]
These inequalities give all the values of $z_I$ for all nonempty $I\subset[n+1]$.
\end{proof}

In \cite{ardila2011matroid}, Ardila, Benedetti, and Doker prove that any generalized permutahedron can be written as a signed Minkowski sum of scaled standard simplices. 
Specifically, \[P_n^{\cZ}(\{z_I\}) = P^{\Y}_n(\{y_I\}) \deftobe \sum_{I \subseteq [n]} y_I \, \bigtriangleup_I,\] where $y_I = \sum_{J \subseteq I} (-1)^{\card{I}-\card{J}} z_J$ and $\bigtriangleup_I \deftobe \conv\{\ee_i : i \in I\}$,  for each $I \subseteq [n]$. 
In the case that $y_I \geq 0$ for all $I$, we call this a \defing{$\mathcal{Y}$-generalized permutahedron}, which enjoys a variety of special properties including Ehrhart positivity and $h^*$-real-rootedness \cite{Pos}; for more on these properties, see \cite{BrandenSolus, Braun, Liu}.

\begin{proposition} \label{prop: y-expression}
    \begin{sloppypar}
    The lifted $\bb$-parking-function polytope $\Xnb$ is the signed Minkowski sum ${\sum_{I\subseteq [n+1]}y_I \bigtriangleup_I}$, where
    \begin{equation*}
        y_I
        \deftobe
        \begin{dcases*}
            1  & if $n+1 \notin I$ and $\card{I} = 1$, \\
            0  & if $I = \setof{n+1}$, or if $n+1 \notin I$ and $\card{I} \ge 2$, \\
            b_{1}-1 & if $n+1 \in I$ and $\card{I}=2$, \\
            \textstyle\sum_{j = 0}^{\card{I}-3}
            (-1)^{\card{I}+j-1}
            \begin{psmallmatrix}
               \! \card{I}-3 \! \\
               \! j \!
            \end{psmallmatrix}
            b_{j+2}
            & if $n+1 \in I$, $\card{I} \ge 3$.
        \end{dcases*}
    \end{equation*}
    \end{sloppypar}
\end{proposition}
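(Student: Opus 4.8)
The plan is to evaluate $y_I$ directly from the signed-Minkowski-sum formula $y_I=\sum_{J\subseteq I}(-1)^{\card{I}-\card{J}}z_J$ recorded before the proposition (following \cite{ardila2011matroid}), substituting the values of $z_J$ computed in \cref{prop: Z-expression} and simplifying with elementary binomial identities. The only facts I would invoke are the alternating-sum identity $\sum_{j=0}^{m}(-1)^{j}\binom{m}{j}=[\,m=0\,]$, its weighted form $\sum_{j=0}^{m}(-1)^{j}\binom{m}{j}\,j=0$ for $m\ge 2$, and the partial alternating sum $\sum_{j=0}^{a}(-1)^{j}\binom{N}{j}=(-1)^{a}\binom{N-1}{a}$, valid for $N\ge 1$. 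After stating these, I would split into the cases $n+1\notin I$ and $n+1\in I$.

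First I would dispose of the case $n+1\notin I$. Here every $J\subseteq I$ satisfies $n+1\notin J$, so $z_J=\card{J}$ and, with $m\deftobe\card{I}$, $y_I=\sum_{j=0}^{m}(-1)^{m-j}\binom{m}{j}\,j$. This equals $1$ when $m=1$ and $0$ when $m\ge 2$, which is exactly the first clause and the ``$n+1\notin I$, $\card{I}\ge 2$'' part of the second clause of the formula.

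Next, for $n+1\in I$ I would set $k\deftobe\card{I}$ and partition the subsets $J\subseteq I$ according to whether $n+1\in J$. The subsets with $n+1\notin J$ are precisely the subsets of $I\setminus\{n+1\}$ and contribute $\card{J}$; the subsets with $n+1\in J$ have the form $J'\cup\{n+1\}$ with $J'\subseteq I\setminus\{n+1\}$ and contribute $S_1+\dots+S_{\card{J'}}$. Hence $y_I=A+B$, where
\[
A=\sum_{j=0}^{k-1}(-1)^{k-j}\binom{k-1}{j}\,j,
\qquad
B=(-1)^{k-1}\sum_{j=0}^{k-1}(-1)^{j}\binom{k-1}{j}\bigl(S_1+\dots+S_j\bigr).
\]
The term $A$ is handled exactly as in the first case: it is $0$ for $k\ge 3$, equals $-1$ for $k=2$, and is $0$ for $k=1$. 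For $B$, I would interchange the order of summation to write $B=(-1)^{k-1}\sum_{i=1}^{k-1}S_i\sum_{j=i}^{k-1}(-1)^{j}\binom{k-1}{j}$, evaluate the inner sum by the partial-alternating-sum identity as $(-1)^{i}\binom{k-2}{i-1}$ (using $k-1\ge 1$), then substitute $S_i=\sum_{\ell=1}^{i}b_\ell$ and interchange once more, so that the coefficient of each $b_\ell$ becomes a second partial alternating sum of binomial coefficients. This shows that the coefficient of $b_1$ vanishes when $k\ge 3$ and that the coefficient of $b_\ell$ equals $(-1)^{k+\ell-1}\binom{k-3}{\ell-2}$ for $2\le\ell\le k-1$; it also gives $B=b_1$ when $k=2$ and $B=0$ when $k=1$. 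Adding $A$ and reindexing by $j=\ell-2$ then yields $y_I=0$ for $I=\{n+1\}$, $y_I=b_1-1$ for $\card{I}=2$, and $y_I=\sum_{j=0}^{\card{I}-3}(-1)^{\card{I}+j-1}\binom{\card{I}-3}{j}b_{j+2}$ for $\card{I}\ge 3$, as claimed.

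The computation is entirely mechanical; the only place needing care is the degeneration of the binomial identities for small $\card{I}$, so the cases $\card{I}=1$ and $\card{I}=2$ must be peeled off by hand and matched against the stated piecewise expression. I expect the sign bookkeeping through the two index shifts $\card{J}\mapsto j$ and $i\mapsto\ell$ to be the only real source of friction, but it presents no conceptual obstacle.
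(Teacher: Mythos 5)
Your proposal is correct and follows essentially the same route as the paper: both apply the inversion formula $y_I=\sum_{J\subseteq I}(-1)^{\card{I}-\card{J}}z_J$ to the $z_I$ from \cref{prop: Z-expression}, split the sum according to whether $n+1\in J$, and simplify via the same partial alternating-sum binomial identities with the small cases $\card{I}\in\{1,2\}$ handled separately. The only cosmetic difference is that the paper abbreviates your term $A$ as $-y_{I\setminus\setof{n+1}}$, reusing the already-computed case $n+1\notin I$.
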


\begin{proof}
    We write $\Xnb=P_{n+1}^{\cZ}(\{z_I\})$, with $z_I$ as given in \cref{prop: Z-expression}.  
    By \cite[Prop.~2.2.4]{doker2011geometry}, this becomes $P_{n+1}^{\cZ}(\{z_I\})=\sum_{I\subseteq[n+1]}y_I\bigtriangleup_I$, where the $y_I$s are determined by $z_I=\sum_{J\subseteq I}y_J$  for all nonempty $I\subseteq[n+1]$. 

First note that $y_{\{i\}}=z_{\{i\}}=
    \begin{cases*}
        1 & if $1\le i\le n$,\\
        0 & if $i=n+1$.
    \end{cases*}$
    
\noindent For $I\subseteq[n+1]$ not containing $n+1$ with $\card{I} \ge 2$, we apply the principle of inclusion--exclusion to get
    \begin{equation*}
    y_I = \sum_{J \subseteq I}
        (-1)^{\card{I} - \card{J}} z_J
        = \sum_{k = 0}^{\card{I}} (-1)^{\card{I} - k} \binom{\card{I}}{k} k
        = 0.
    \end{equation*}
    For $I\subseteq[n+1]$ with $n+1 \in I$, $\card{I}\ge 2$, then
    again by the principle of inclusion--exclusion,
    \begin{align}
        y_I
        &=
        \sum_{J \subseteq I}
        (-1)^{\card{I} - \card{J}} z_J
        =
        \sum_{J \subseteq I \setminus \setof{n+1}}
        (-1)^{\card{I} - \card{J}} z_J
        +
        \sum_{\substack{%
            J \subset I \\
            n+1\in J
        }}
        (-1)^{\card{I}-\card{J}} z_J \notag \\
        &=
        -y_{I \setminus \setof{n+1}}
        +
        \sum_{k = 1}^{\card{I} - 1}
        (-1)^{\card{I} - k - 1} \binom{\card{I} - 1}{k}
        \sum_{\ell = 1}^k S_{\ell}. 
        \label{eq: 1}
    \end{align}
    When $\card{I}=2$, $y_I = -1 + b_1$.  
    For $\card{I}\ge 3$, the second term on the right-hand side of \cref{eq: 1} can be rewritten as
    \begin{align*}
        &\sum_{\ell=1}^{\card{I}-1}S_{\ell}\left(\sum_{k=\ell}^{\card{I}-1}(-1)^{\card{I}-1-k}\binom{\card{I}-1}{k}\right)=\sum_{\ell=1}^{\card{I}-1}S_{\ell}\left(\sum_{k=0}^{\card{I}-1-\ell}(-1)^k\binom{\card{I}-1}{k}\right)\\
        &\quad = \sum_{\ell=1}^{\card{I}-1}\left(\sum_{i=1}^\ell b_i\right)(-1)^{\card{I}-1-\ell}\binom{\card{I}-2}{\card{I}-1-\ell}=\sum_{i=1}^{\card{I}-1}b_i\sum_{\ell=i}^{\card{I}-1}(-1)^{\card{I}-1-\ell}\binom{\card{I}-2}{\card{I}-1-\ell}\\
        &\quad = \sum_{i=1}^{\card{I}-1}b_i\sum_{\ell=0}^{\card{I}-1-i}(-1)^{\ell}\binom{\card{I}-2}{\ell}=\sum_{i=2}^{\card{I}-1}(-1)^{\card{I}-1-i}\binom{\card{I}-3}{\card{I}-1-i}b_i.
    \end{align*}
    Hence, for $\card{I}\ge 3$, 
    \begin{equation*}
        y_I
        =
        \sum_{i=2}^{\card{I} - 1}
        (-1)^{\card{I} - 1 - i}
        \binom{\card{I} - 3}{\card{I} - 1 - i}b_i
        =
        (-1)^{\card{I} - 1}
        \sum_{i = 0}^{\card{I} - 3}
        (-1)^i 
        \binom{\card{I} - 3}{i}
        b_{i + 2}.
        \qedhere
    \end{equation*}
\end{proof}

\Cref{prop: y-expression} allows us to determine from $\bb = (b_1, \dotsc, b_n)$ whether $\Xnb$ is a $\Y$-generalized permutahedron.  
In particular, the liftings of $\PF_n$ and of the specific $\bb$-parking-function polytope $\mathfrak{X}_n(a, b)$ studied in \cite{HanadaLentferVindas} (where $(a,b) \deftobe \bb = (a, b, b, \dotsc, b)$) are both $\Y$-generalized permutahedra.

\subsection{$h$-Polynomial of $\Xn$} \label{subsec:h-poly}

We now consider the $h$-polynomial of the $\bb$-parking-function polytope.
Given an $n$-dimen\-sional polytope~$P$ and $0 \leq i \leq n$, let~$f_i(P)$ denote the number of $i$-dimensional faces of~$P$.  
The $f$-polynomial of~$P$ is then defined as $f(P;t) \deftobe \sum_{i=0}^n f_i(P)t^i$ and the $h$\nobreakdash-polynomial of~$P$ is the result of applying a change of basis given by ${h(P;t) \deftobe f(P;t-1)}$.

The stellohedron $\St_n$ is a simple generalized permutahedron first defined by Postnikov, Reiner, and Williams \cite{PRW} as the nestohedron for a certain building set (see \cref{sec:bulding_set} for more details), which is intimately connected to $\Xn$.
The $h$-polynomial of $\St_n$ is the \textit{binomial Eulerian polynomial} 
\[
    \widetilde{A}_n(z)=1+z\sum_{k=1}^n\binom{n}{k}A_k(z),
\]
where $A_k(z)=\sum_{\sigma\in\mathfrak{S}_k}z^{\des(\sigma)}$ is the Eulerian polynomial \cite{PRW}.

From \cref{subthm: Xn is simple,thm: x-park_is_GPerm}, we have that $\Xnb$ is a simple generalized permutahedron.  
By \cite[Theorem 4.2]{PRW}, the $h$-polynomial of a simple generalized permutahedron can be computed using its so-called \emph{vertex posets}, which are defined for every vertex of the polytope as follows.

For a generalized permutahedron $P$ of dimension $n$ in $\mathbb{R}^{n+1}$, the normal cone of every vertex~$v$ of $P$ is the intersection of the half spaces in $\R^{n+1}/(1, \dotsc, 1)\R$ defined by a set of inequalities of the form $x_i \le x_j$, where $(i, j) \in [n+1]^{2}$.  
One can associate to each vertex $v$ a \emph{vertex poset} $Q_v$ by setting $i\le_{Q_v} j$ whenever $x_i\le x_j$ in the normal cone of~$v$.  
We denote covering relations by $\lessdot$.  
The \emph{descent set} $\Des(Q_v)$ of $Q_v$ is the set of ordered pairs $(i,j)$ such that $i \lessdot_{Q_v} j$ but $i>_{\mathbb{Z}}j$.  
The \emph{number of descents} of $Q_v$ is $\des(Q_v) \deftobe \card{\Des(Q_v)}$.  
When $P$ is a simple generalized permutahedron, by \cite[Theorem 4.2]{PRW}, its $h$-polynomial is given by
\begin{equation*}
    h(P;z)=\sum_{v\in \Ver(P)}z^{\des(Q_v)}.
\end{equation*} 
Since the face numbers of $\Xn$ do not change after the lifting $\ell$, we may use this equation to compute the $h$-polynomial of $\Xn$.
We characterize the vertex poset for every vertex of $\Xnb$.

\begin{figure}[H]
    \newlength{\subfigwidth}
    \setlength{\subfigwidth}{0.3\textwidth}
    \centering
   
    \begin{subfigure}[b]{\subfigwidth} 
        \centering
        \footnotesize
        \begin{tikzpicture}[scale=0.7]
            \node (dot) at (0,0) {$\phantom{(1)}$};
            \node (1) at (0,1) {$n+1$};
            \node (2) at (0,2) {$\pi(1)$};
            \node (3) at (0,3) {$\pi(2)$};
            \node (6) at (0,4) {$\rvdots$};
            \node (4) at (0,5) {$\pi(n-1)$};
            \node (5) at (0,6) {$\pi(n)$};
            
            \draw (1)--(2)--(3);
            \draw (3)--(6);
            \draw (6)--(4);
            \draw (4)--(5);
        \end{tikzpicture}
        \caption{The $k=0$ case}
        \label{fig:poset k=0}
    \end{subfigure}
    \begin{subfigure}[b]{\subfigwidth} 
        \centering
        \footnotesize
        \begin{tikzpicture}[scale=0.7]
            \node (1) at (-0.5,0) {$\pi(1)$};
            \node (2) at (0.5,0) {$\pi(2)$};
            \node (dot) at (1.25,0) {$\ldots$};
            \node (3) at (2,0) {$\pi(k)$};
            \node (4) at (0.75,1) {$n+1$};
            \node (5) at (0.75,2) {$\pi(k+1)$};
            \node (6) at (0.75,3) {$\pi(k+2)$};
            \node (9) at (0.75,4) {$\rvdots$};
            \node (7) at (0.75,5) {$\pi(n-1)$};
            \node (8) at (0.75,6) {$\pi(n)$};
            
            \draw (1)--(4);
            \draw (2)--(4);
            \draw (3)--(4);
            \draw (4)--(5)--(6);
            \draw (6)--(9);
            \draw (9)--(7);
            \draw (7)--(8);
        \end{tikzpicture}
        \caption{The $1\le k\le n-1$ case}
        \label{fig:vertex poset rake permute}
    \end{subfigure}
    \begin{subfigure}[b]{\subfigwidth} 
        \centering
        \footnotesize
        \begin{tikzpicture}[scale=0.7]
            \node (1) at (-0.5,0) {$\phantom{(}1\phantom{)}$};
            \node (2) at (0.5,0) {$2$};
            \node (dot) at (1.25,0) {$\ldots$};
            \node (3) at (2,0) {$n$};
            \node (4) at (0.75,1) {$n+1$};
                        
            \draw (1)--(4);
            \draw (2)--(4);
            \draw (3)--(4);
            
        \end{tikzpicture}
        \caption{The $k=n$ case}
        \label{fig:poset k=n}
    \end{subfigure}
    \caption{The vertex poset $Q_{\overline{\bv}}$ in \cref{sublem: Vertex posets b1
    ge 2}
    }
    \label{fig:vertex poset b1 ge 2}
\end{figure}

\begin{lemma} \label{lem: Vertex posets}
    Let $\overline{\bv}$ be a vertex of $\Xnb$.  
    Fix $\pi \in \SSS$ and $k \in \Z$ with $0 \le k \le n$ such that $\overline{\bv}$ is the image under the lifting $\ell$ of the vertex $\pi(\y_{k})$ of $\Xn$.
    \begin{enumerate}[label=(\alph*), ref={\thetheorem(\alph*)}]
        \item 
        \label[sublem]{sublem: Vertex posets b1 ge 2}%
        If $b_{1} \ge 2$, or if $b_{1} = 1$ and $k \ge 2$, then the vertex poset $Q_{\overline{\bv}}$ is defined by the covering relations
        \begin{alignat*}{2}
            \pi(j)
            &\lessdot
            n+1 && \qquad \text{for $1 \le j \le k$}, \\ 
            n+1
            &\lessdot
            \pi(k+1), \\
            \pi(j-1)
            &\lessdot
            \pi(j) && \qquad \text{for $k+2 \le j \le n$}.
        \end{alignat*}
        (See the Hasse diagrams in
        \cref{fig:vertex poset b1 ge 2}.)

        \item  
        \label[sublem]{sublem: Vertex posets b1 = 1}%
        If $b_{1} = 1$ and $k \in \setof{0,1}$ (in which case $\y_k = \y_{0} = \y_{1}$), then the vertex poset $Q_{\overline{\bv}}$ is defined by the covering relations
        \begin{align*}
            \pi(1) 
            & \lessdot%
            \pi(2)
            \lessdot \dotsb \lessdot
            \pi(n), \\
            n+1
            & \lessdot%
            \pi(2).
        \end{align*}
        (See the Hasse diagram in \cref{fig:vertex poset b_1 = 1}.)
    \end{enumerate}
\end{lemma}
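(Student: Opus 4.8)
The plan is to read the vertex poset $Q \deftobe Q_{\overline{\bv}}$ directly off the edge directions emanating from $\overline{\bv}$, which \cref{cor: generators of tangent cone of Xn} has already supplied on the level of $\Xn$. Recall that for a simple polytope the normal cone at a vertex $v$ is the polar dual of the cone generated by the edge directions at $v$: if $u_{1}, \dotsc, u_{n}$ are those directions, the normal cone is $\{\, c \sst \angles{c, u_{i}} \le 0 \text{ for } 1 \le i \le n \,\}$. Because $\Xnb$ lies in the hyperplane $H$, every edge direction $u$ satisfies $\angles{(1, \dotsc, 1), u} = 0$, so the normal cone is well defined in $\R^{n+1}/(1, \dotsc, 1)\R$, and by \cref{subthm: Xn is simple} and \cref{thm: x-park_is_GPerm} it is there a pointed, full-dimensional, simplicial cone cut out by inequalities $x_i \le x_j$. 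Writing a generic element of the normal cone as $(x_{1}, \dotsc, x_{n+1})$, the inequality $\angles{c, \ee_{a} - \ee_{b}} \le 0$ reads $x_{a} \le x_{b}$, i.e.\ the relation $a \le_{Q} b$; and since the (lifted) $u_{i}$ will all turn out to be of the form $\ee_{i} - \ee_{j}$ (these vectors span a pointed cone, so the associated digraph is acyclic), an inequality $x_{a} \le x_{b}$ holds throughout the normal cone exactly when there is a directed path from $a$ to $b$, that is, when $(a,b)$ lies in the transitive closure of the relations read off from the $u_{i}$. So it suffices to list the $n$ edge directions at $\overline{\bv}$, rewrite each in the form $\ee_{a} - \ee_{b}$, and take the transitive closure.

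First I would transport the edge directions at $\bv = \pi(\y_{k})$ given by \cref{cor: generators of tangent cone of Xn} up to $\overline{\bv}$. By the computation in the proof of \cref{thm: x-park_is_GPerm}, the lifting $\ell$ sends an edge direction $c\,\ee_{j}$ of $\Xn$ to $c(\ee_{j} - \ee_{n+1})$ and an edge direction $c(\ee_{i} - \ee_{j})$ to $c(\ee_{i} - \ee_{j})$. Hence in case (a) with $k \le n - 1$ the edge directions at $\overline{\bv}$ are $\ee_{\pi(j)} - \ee_{n+1}$ for $1 \le j \le k$, the vector $\ee_{n+1} - \ee_{\pi(k+1)}$, and $\ee_{\pi(j-1)} - \ee_{\pi(j)}$ for $k+2 \le j \le n$; for $k = n$ they are $\ee_{j} - \ee_{n+1}$ for $1 \le j \le n$; and in case (b) they are $\ee_{\pi(j-1)} - \ee_{\pi(j)}$ for $2 \le j \le n$ together with $\ee_{n+1} - \ee_{\pi(2)}$.

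It then remains to extract $Q$ in each case. In case (a) with $1 \le k \le n-1$, the edge directions yield $\pi(j) \le_{Q} n+1$ for $1 \le j \le k$, then $n+1 \le_{Q} \pi(k+1)$, and $\pi(j-1) \le_{Q} \pi(j)$ for $k+2 \le j \le n$; the transitive closure is the poset whose minimal elements $\pi(1), \dotsc, \pi(k)$ are each covered by $n+1$, followed by the chain $n+1 \lessdot \pi(k+1) \lessdot \dotsb \lessdot \pi(n)$, and this list of relations is exactly the set of covers (\cref{fig:vertex poset rake permute}). For $k = 0$ the relations collapse to the chain $n+1 \lessdot \pi(1) \lessdot \dotsb \lessdot \pi(n)$ (\cref{fig:poset k=0}); for $k = n$, where $\bv = (1, \dotsc, 1)$ and $\pi$ is immaterial, one gets $j \lessdot n+1$ for each $j \in [n]$ (\cref{fig:poset k=n}). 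In case (b) the relations are $\pi(1) \lessdot \pi(2) \lessdot \dotsb \lessdot \pi(n)$ together with $n+1 \le_{Q} \pi(2)$, and since $n+1$ is incomparable to $\pi(1)$ this last relation is itself a cover, giving \cref{fig:vertex poset b_1 = 1}. The main thing to watch is the four-case bookkeeping together with the lifting step of the previous paragraph; once the edge directions are in the form $\ee_{a} - \ee_{b}$, checking that the relations obtained are exactly the claimed covering relations — none implied by the others, none added by transitivity — is immediate from the Hasse diagrams, and I expect no substantive obstacle beyond this.
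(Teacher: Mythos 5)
Your proposal is correct and follows essentially the same route as the paper: lift the tangent-cone generators of \cref{cor: generators of tangent cone of Xn} via the linear map underlying $\ell$, observe that these lifted generators are the facet-defining outer normals of the normal cone at $\overline{\bv}$, and read each resulting inequality $x_a \le x_b$ as the relation $a \le_{Q} b$. The case bookkeeping (including the $k=0$ and $k=n$ degenerations and the $b_1=1$, $k\in\{0,1\}$ case) matches the paper's treatment.
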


\begin{figure}[H]
    \centering

    \footnotesize
    \begin{tikzpicture}[scale=0.7]
        \node (1) at (0,0) {$\pi(1)$};
        \node (2) at (2,0) {$n+1$};
        \node (3) at (1,1) {$\pi(2)$};
        \node (4) at (1,2) {$\pi(3)$};
        \node (7) at (1,3) {$\rvdots$};
        \node (5) at (1,4) {$\pi(n-1)$};
        \node (6) at (1,5) {$\pi(n)$};
        
        \draw (1)--(3);
        \draw (2)--(3);
        \draw (3)--(4);
        \draw (4)--(7);
        \draw (7)--(5);
        \draw (5)--(6);
    \end{tikzpicture}
    \caption{The vertex poset $Q_{\overline{\bv}}$ in \cref{sublem: Vertex posets b1 = 1}}
    \label{fig:vertex poset b_1 = 1}
\end{figure}

\begin{proof}
    Let $\bv \deftobe \pi(\y_k)$, so that $\overline{\bv} = \ell(\bv)$.  
    The normal cone at $\overline{\bv}$ is the polar of the tangent cone at $\overline{\bv}$.  
    From the formula for the affine projection~$\ell$, we have that $\ell(\bw) - \ell(\bv) = L(\bw - \bv)$ for all $\bw \in \R^{n}$, where $L \maps \R^{n} \to \R^{n+1}$ is the linear map $(x_{1}, \dotsc, x_{n}) \mapsto (x_{1}, \dotsc, x_{n}, -\sum_{i=1}^{n} x_{i})$. 
    To compute the tangent cone of $\Xnb$ at $\overline{\bv}$, we apply~$L$ to the generators of $\Tan{\bv}(\Xn)$ given in \cref{cor: generators of tangent cone of Xn}.  
    Now,
    \begin{align*}
        L(\ee_{\pi(j)}) 
        &=%
        \ee_{\pi(j)} - \ee_{n+1}, \\
        L(-\ee_{\pi(j)}) 
        &=%
        -\ee_{\pi(j)} + \ee_{n+1}, \\
        L(\ee_{\pi(j-1)} - \ee_{\pi(j)})
        &=%
        \ee_{\pi(j-1)} - \ee_{\pi(j)},
    \end{align*}
    for $1 \le j \le n$.\footnote{Note that, in an abuse of notation, we are writing $\ee_{j}$ for the $j$\textsuperscript{th} standard basis vector in $\R^{n+1}$ on the right-hand side and in $\R^{n}$ on the left-hand side of the equations.} 
    Thus, the tangent cone $\Tan{\overline{\bv}}$ of $\Xnb$ at $\overline{\bv}$ is generated as follows:
    \begin{enumerate}[label=(\alph*)]
        \item  
        If $b_{1} \ge 2$, or if $b_{1} = 1$ and $k \ge 2$, then
        $\Tan{\overline{\bv}}$ is generated by
        \begin{equation*}
            \setof{\ee_{\pi(j)} - \ee_{n+1} \sst 1 \le j \le k} 
            \,
            \cup 
            \,
            \setof{-\ee_{\pi(k+1)} + \ee_{n+1}}
            \,
            \cup 
            \,
            \setof{\ee_{\pi(j-1)} - \ee_{\pi(j)} \sst k+2 \le j \le n}
        \end{equation*}
        if $k \le n-1$, and by
        $
            \setof{\ee_{j} - \ee_{n+1} \sst 1 \le j \le n} 
        $
        if $k = n$.

        \item  
        If $b_{1} = 1$ and $k \in \setof{0, 1}$, then
        $\Tan{\overline{\bv}}$ is generated by
        \begin{equation*}
            \setof{\ee_{\pi(j-1)} - \ee_{\pi(j)} \sst 2 \le j \le n}
            \,
            \cup
            \,
            \setof{-\ee_{\pi(2)} + \ee_{n+1}}.
        \end{equation*}
    \end{enumerate}
    These generating vectors of the tangent cone are the facet-defining outer normals of the normal cone.  
    Thus, for example, in the case where $b_{1} \ge 2$ and $k \le n-1$, the normal cone at $\overline{\bv}$ is defined by the inequalities
    \begin{alignat*}{2}
        x_{\pi(j)}
        &\le
        x_{n+1} && \qquad \text{for $1 \le j \le k$}, \\ 
        x_{n+1}
        &\le
        x_{\pi(k+1)}, \\
        x_{\pi(j-1)} 
        &\le
        x_{\pi(j)} && \qquad \text{for $k+2 \le j \le n$}; 
    \end{alignat*}
    and the other cases are handled similarly.  
    These inequalities determine the covering relations given in the statement of \cref{lem: Vertex posets}.
\end{proof}

\begin{theorem}\label{thm:h-poly_of_Xn}
    The $h$-polynomial of $\Xn$ is
    \begin{equation*}
        h(\Xn;z)
        =
        \begin{dcases*}
            \widetilde{A}_n(z) - n z A_{n-1}(z) & if $b_1=1$, \\
            \widetilde{A}_n(z) & if $b_1 \ge 2$.
        \end{dcases*}  
    \end{equation*}
\end{theorem}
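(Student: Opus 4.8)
The plan is to apply the Postnikov--Reiner--Williams formula $h(P;z)=\sum_{v\in\Ver(P)}z^{\des(Q_v)}$ from \cite[Theorem 4.2]{PRW}, which is available because $\Xnb$ is a simple generalized permutahedron by \cref{subthm: Xn is simple} and \cref{thm: x-park_is_GPerm}; since the lifting $\ell$ preserves all face numbers, $h(\Xn;z)=h(\Xnb;z)$, so the whole problem reduces to computing $\des(Q_{\overline{\bv}})$ for every vertex $\overline{\bv}$ of $\Xnb$ from the posets determined in \cref{lem: Vertex posets}, and then summing the resulting monomials.

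First I would organize the sum over vertices by the parameter $k$. For $1\le k\le n$, a vertex $\pi(\y_k)$ is completely determined by the $k$-subset $A\deftobe\pi(\{1,\dots,k\})\subseteq[n]$ of coordinates equal to $1$ together with the word $w\deftobe(\pi(k+1),\dots,\pi(n))$ listing $[n]\setminus A$; permuting $\pi(1),\dots,\pi(k)$ among themselves changes neither the vertex nor its vertex poset. When $b_1\ge2$ the same holds for $k=0$ (with $A=\emptyset$ and $w$ a permutation of $[n]$), and the vertices $\y_0,\dots,\y_n$ are all distinct. When $b_1=1$ the cases $k=0$ and $k=1$ coincide, since $\y_0=\y_1$, producing a single family of $n!$ vertices $\{\pi(\y_0):\pi\in\mathfrak{S}_n\}$; keeping track of this coincidence is the one subtlety that separates the two cases of the theorem.

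Next I would read off descents from \cref{lem: Vertex posets}. The key observations are: a covering relation $\pi(j)\lessdot n+1$ is never a descent; the unique covering relation with $n+1$ at the bottom --- namely $n+1\lessdot\pi(k+1)$ in \cref{sublem: Vertex posets b1 ge 2}, or $n+1\lessdot\pi(2)$ in \cref{sublem: Vertex posets b1 = 1} --- is always a descent; and a chain relation $\pi(j-1)\lessdot\pi(j)$ is a descent precisely when $\pi(j-1)>\pi(j)$. Writing $\des(w)$ for the number of descents of a word $w$, it follows that for $0\le k\le n-1$ the poset of $\pi(\y_k)$ satisfies $\des(Q)=1+\des(w)$, that for $k=n$ the poset is a star with $\des(Q)=0$, and that in the $b_1=1$ family the poset satisfies $\des(Q)=1+\des(\pi)$. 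Since the listings of a fixed $m$-element set are in descent-preserving bijection with $\mathfrak{S}_m$, summing $z^{\des(w)}$ over all such listings gives the Eulerian polynomial $A_m(z)$.

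Finally I would assemble the pieces. For $b_1\ge2$, the vertices with a fixed $k\le n-1$ contribute $\binom{n}{k}\,zA_{n-k}(z)$ (choose the $k$-subset, then a linear order on the rest), the single vertex with $k=n$ contributes $1$, and reindexing $m=n-k$ gives $h(\Xn;z)=1+z\sum_{m=1}^{n}\binom{n}{m}A_m(z)=\widetilde A_n(z)$. For $b_1=1$, the combined $k=0$ and $k=1$ contributions $\binom{n}{0}zA_n(z)+\binom{n}{1}zA_{n-1}(z)=zA_n(z)+nzA_{n-1}(z)$ are replaced by the single family $\{\pi(\y_0)\}$ contributing $\sum_{\pi\in\mathfrak{S}_n}z^{1+\des(\pi)}=zA_n(z)$, so $h(\Xn;z)=\widetilde A_n(z)-nzA_{n-1}(z)$. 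The main obstacle is not any single hard step but the bookkeeping: converting a sum over vertices into a sum over (subset, word) pairs without miscounting, and correctly tracking the degenerate identification $\y_0=\y_1$ when $b_1=1$, which is exactly what makes the two formulas differ.
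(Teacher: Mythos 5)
Your proposal is correct and follows essentially the same route as the paper: both apply the Postnikov--Reiner--Williams formula $h(P;z)=\sum_{v}z^{\des(Q_v)}$ to the simple generalized permutahedron $\Xnb$, read off $\des(Q_{\overline{\bv}})=1+\des(\pi(k+1),\dotsc,\pi(n))$ (resp.\ $1+\des(\pi)$, resp.\ $0$) from the vertex posets of \cref{lem: Vertex posets}, and organize the sum over vertices by $k$ and by (subset, word) pairs exactly as in the proof of \cref{cor:number_vertices}. Your accounting of the $b_1=1$ case --- replacing the would-be $zA_n(z)+nzA_{n-1}(z)$ contribution of $k\in\{0,1\}$ by the single $zA_n(z)$ from the $n!$ vertices $\pi(\y_0)=\pi(\y_1)$ --- matches the paper's computation.
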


\begin{proof}
    Consider first the case in which $b_{1} \ge 2$.  
    From \cref{fig:vertex poset b1 ge 2} (corresponding to \cref{sublem: Vertex posets b1 ge 2}), we see that $\smash{Q_{\smash{\overline{\pi(\y_{k})}}}}$ contains $1 + \des(\pi(k+1), \dotsc, \pi(n))$ descents if $0 \le k \le n-1$, and $0$ descents if $k = n$. 
    Thus, enumerating vertices as in the proof of \cref{cor:number_vertices},
    \begin{align*}
        \sum_{\bv \in \Ver(\Xn)} z^{\des(Q_{\overline{\bv}})}
        &=
        \sum_{k=0}^{n-1} 
        \sum_{\substack{I \subseteq [n] \sst \\ \card{I} = n-k}}
        \sum_{\sigma \in \mathfrak{S}_{n-k}}
        z^{1 + \des(\sigma)}
        +
        1 
        =
        1
        +
        z
        \sum_{k=1}^{n} \binom{n}{k} 
        \sum_{\sigma \in \mathfrak{S}_{k}} 
        z^{\des(\sigma)}
        =
        \widetilde{A}_{n}(z).
    \end{align*}

    Now consider the case in which $b_{1} = 1$.  
    From \cref{fig:vertex poset b1 ge 2,fig:vertex poset b_1 = 1}, we see that $\smash{Q_{\smash{\overline{\pi(\y_{k})}}}}$ contains $1 +\des(\pi)$ descents if $k = 1$, $1 + \des(\pi(k+1), \dotsc,\pi(n))$ descents if $2 \le k \le n-1$, and $0$ descents if $k =n$.  
    Again enumerating vertices as in the proof of \cref{cor:number_vertices},
    \begin{align*}
        \sum_{\bv \in \Ver(\Xn)} z^{\des(Q_{\overline{\bv}})}
        &=
        \sum_{\pi \in \mathfrak{S}_{n}}
        z^{1 + \des(\pi)}
        +
        \sum_{k=2}^{n-1} 
        \sum_{\substack{I \subseteq [n] \sst \\ \card{I} = n-k}}
        \sum_{\sigma \in \mathfrak{S}_{n-k}}
        z^{1 + \des(\sigma)}
        +
        1 \\
        &=
        1
        +
        z
        \sum_{k=1}^{n-2} \binom{n}{k} 
        \sum_{\sigma \in \mathfrak{S}_{k}} 
        z^{\des(\sigma)}
        +
        z
        \sum_{\pi \in \mathfrak{S}_{n}}
        z^{\des(\pi)}
        =
        \widetilde{A}_n(z) - n z A_{n-1}(z). \qedhere
    \end{align*}
\end{proof}

Since the $h$-polynomial of $\Xn$ depends only on whether $b_1=1$ or $b_1\ge 2$, comparing with results on faces numbers of $\mathfrak{X}_n(a, b, b, \dotsc, b)$ in \cite[Prop. 3.14]{HanadaLentferVindas}, we immediately have the results on face numbers of $\Xn$ for all $\bb$.

\begin{corollary} \label{cor: f-vector of Xn}
    Let $f_k$ be the number of $k$-dimensional faces of $\Xn$ for
    $k=0,1,\ldots,n$.  
    Then,
    \begin{equation*}
        f_k
        =
        \begin{dcases*}
            \textstyle\sum_{j=0,j\neq 1}^{n-k}
            \binom{n}{j}(n-k-j)!\, S(n-j+1,n-k-j+1)
            & if $b_1 =1$,\\
            \textstyle\sum_{j=0}^{n-k}
            \binom{n}{j}(n-k-j)!\,S(n-j+1,n-k-j+1)
            & if $b_1\ge 2$,
        \end{dcases*}
    \end{equation*}
    where $S(n,k)$ is the \emph{Stirling number of the second kind}.
\end{corollary}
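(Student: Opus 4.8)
The plan is to obtain the $f$-vector of $\Xn$ entirely from the $h$-polynomial computed in \cref{thm:h-poly_of_Xn}, exploiting that for a \emph{simple} polytope the $f$-vector is a fixed linear image of the $h$-vector. Explicitly, writing $h(\Xn;z) = \sum_{j=0}^{n} h_j z^j$, the defining relation $h(P;t) = f(P;t-1)$ inverts to $f(\Xn;t) = h(\Xn;t+1)$, so that $f_k = \sum_{j=0}^{n} \binom{j}{k}\, h_j$ for $0 \le k \le n$. Since $\Xn$ (equivalently $\Xnb$) is simple by \cref{subthm: Xn is simple}, this applies; hence each $f_k$ is completely determined by $h(\Xn;z)$, which by \cref{thm:h-poly_of_Xn} depends only on whether $b_1 = 1$ or $b_1 \ge 2$.

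The second step is to exhibit, within the family $\mathfrak{X}_n(a,b,b,\dots,b)$ of \cite{HanadaLentferVindas}, one polytope realizing each of the two possible $h$-polynomials. The classical parking-function polytope $\PF_n = \mathfrak{X}_n(1,1,\dots,1)$ lies in the regime $b_1 = 1$, while any $\mathfrak{X}_n(a,b,b,\dots,b)$ with $a \ge 2$ lies in the regime $b_1 \ge 2$. Each of these is a simple polytope (being an instance of $\Xn$), and \cref{thm:h-poly_of_Xn} applies to all of them, so $h(\PF_n;z) = h(\Xn;z)$ whenever $b_1 = 1$ and $h\bigl(\mathfrak{X}_n(a,b,b,\dots,b);z\bigr) = h(\Xn;z)$ whenever $b_1 \ge 2$. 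Because the $f$-vector of a simple polytope is determined by its $h$-vector, it follows that $f_k(\Xn) = f_k(\PF_n)$ for all $k$ when $b_1 = 1$, and $f_k(\Xn) = f_k\bigl(\mathfrak{X}_n(a,b,b,\dots,b)\bigr)$ for all $k$ when $b_1 \ge 2$. (The degenerate case $n = 1 = b_1$, excluded since \cref{prop: Xn is n-dimensional}, needs no comment.)

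Finally, the two displayed closed forms are read directly off \cite[Prop.~3.14]{HanadaLentferVindas}, which computes the $f$-vector of $\mathfrak{X}_n(a,b,b,\dots,b)$ and already separates the cases $a = 1$ and $a \ge 2$; these match the $b_1 = 1$ and $b_1 \ge 2$ formulas in the statement, with $S(n,k)$ the Stirling number of the second kind. I expect no real obstacle: the passage $h \mapsto f$ is routine, and the coincidence of $h$-polynomials across the family is immediate from \cref{thm:h-poly_of_Xn}. The only genuinely non-trivial ingredient is the evaluation of $\sum_{j}\binom{j}{k}\, h_j$ as the stated Stirling-number expression — an Eulerian-to-Stirling (Worpitzky-type) identity applied to $\widetilde{A}_n(z) = 1 + z\sum_{k}\binom{n}{k}A_k(z)$ and its $b_1 = 1$ variant $\widetilde{A}_n(z) - nzA_{n-1}(z)$ — but this is inherited from \cite{HanadaLentferVindas} rather than redone; a self-contained proof would instead have to carry out exactly that binomial-sum simplification.
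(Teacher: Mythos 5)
Your proposal is correct and follows essentially the same route as the paper: the paper likewise observes that $h(\Xn;z)$ depends only on whether $b_1=1$ or $b_1\ge 2$, that the $f$-vector of a simple polytope is determined by its $h$-vector via $f(P;t)=h(P;t+1)$, and then imports the closed formulas from \cite[Prop.~3.14]{HanadaLentferVindas} for the special family $\mathfrak{X}_n(a,b,\dots,b)$. Your write-up just makes explicit the inversion $f_k=\sum_j\binom{j}{k}h_j$ that the paper leaves implicit.
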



\section{A building set perspective}\label{sec:bulding_set}

In this section, we completely characterize the face structure and identify the combinatorial types of all $\bb$-parking-function polytopes.
In particular, we show that even though not every $\bb$-parking-function polytope is a nestohedron (see \cref{def:nestohedron}), up to combinatorial equivalence, every $\bb$-parking-function polytope is either the classical parking-function polytope $\PF_n$ (if $b_1 = 1$) or the stellohedron $\St_n$ (if $b_1 \ge 2$).

We consider the $\Y$-generalized permutahedron associated with a certain class of collections $\B$.
We recall some background on building sets, nestohedra, and nested set complexes from \cite{PRW, Pos}.

\begin{definition}[{\hspace{1sp}\cite[Definition 7.1]{Pos}}]\label{def: building set} %
    Let $E$ be a finite set.  
    A \emph{building set} on $E$ is a collection $\B$ of nonempty subsets of $E$ that satisfies the conditions:
    \begin{itemize}
        \item[(B1)]
        If $I, J \in \B$ and $I \cap J \neq \emptyset$, then $I \cup J
        \in \B$.

        \item[(B2)]
        The singleton $\setof{i}$ lies in $\B$ for all $i \in E$.
    \end{itemize}
    In this case, we write $\B_{\max} \subset \B$ for the collection of maximal subsets in $\B$ under inclusion.
\end{definition}

\begin{definition} \label{def:nestohedron}
Let $\B$ be a building set in $[n]$.
A \emph{nestohedron} $P_{\B}$ is a $\Y$-generalized permutahedron with positive coefficients $y_I$ only for $I\in \B$: $P_n^{\Y}(\{y_I\}) = \sum_{I\in \B}y_I\bigtriangleup_I$.
\end{definition}

We will relate nestohedra to certain complexes formed from building sets.

\begin{definition}[{\hspace{1sp}\cite[Definition 6.4]{PRW}}]
    Let $\B$ be a building set on a finite set $E$.  A \emph{nested set} of $\B$ is a subset $N \subseteq \B \setminus \B_{\max}$ that satisfies the following conditions:

    \begin{itemize}
        \item[(N1)]
        If $I, J\in N$, then either $I \subseteq J$ or $I \supseteq J$ or $I \cap J = \emptyset$.

        \item[(N2)]
        If $J_1,\ldots, J_k \in N$ are pairwise disjoint with $k \ge 2$, then $J_1 \cup \dotsb \cup J_k \notin \B$.
    \end{itemize}
\end{definition}

From this definition, it follows that a subset of a nested set is a nested set.  
Therefore, the collection of all nested sets of $\B$ forms an abstract simplicial complex with vertex set $\B$, called the \emph{nested set complex} $\bigtriangleup_\B$ of $\B$.  
It turns out that the nestohedron $P_{\B}$ is always a simple polytope, and the face lattice of $P_{\B}$ does not depend on the values of the positive parameters $y_I$, but only on $\B$, and this face lattice is dual to the face lattice of the nested set complex $\bigtriangleup_{\B}$; see \cite[Theorem 7.4]{Pos}.

\begin{example} \label{ex:nestohedron}
The collection
$\B=\left\{ \setof{1},\setof{2},\setof{3},\setof{4},\setof{1,2,4},\setof{1,3,4},\setof{2,3,4},\setof{1,2,3,4}\right\}$ is a building set on $[4]$.  
The corresponding nested sets and their dimensions as faces in $\bigtriangleup_\B$ are as follows:
\begin{equation*}
    \left\{\begin{array}{lc}
         \dim = -1 &\emptyset\\
         \hline
         \dim = 0  &\setof{1},\setof{2},\setof{3},\setof{4},\setof{124},\setof{134},\setof{234}\\
         \hline
         \dim = 1 &\setof{1,2},\setof{1,3},\setof{2,3},\setof{1,4},\setof{2,4},\setof{3,4},\setof{1,124},\setof{1,134},\\
         & \setof{2,124},\setof{2,234},\setof{3,134},\setof{3,234},
         \setof{4,124},\setof{4,134},\setof{4,234}\\
         \hline
         \dim = 2 &\setof{1,2,3},\setof{1,2,124},\setof{1,3,134},\setof{2,3,234},\setof{1,4,124},\\
         &\setof{1,4,134},\setof{2,4,124},\setof{2,4,234},\setof{3,4,134},\setof{3,4,234}
    \end{array}\right\}.
\end{equation*}
This building set's nestohedron is the simple polytope in \cref{fig:nestohedron}.  
Each of the vertices corresponds to a maximal nested set.  
In fact, each of the faces of the nestohedron corresponds to the nested set that is the intersection of the maximal nested sets that correspond to the vertices of the face.

\begin{figure}[ht]
\begin{center}
\begin{tikzpicture}[scale=.6]
    \scriptsize
    \node (1) at (2,0) {\tiny $\{3,4,234\}$};
    \node (2) at (7,0) {\tiny$\{3,4,134\}$};
    \node (3) at (0,1) {\tiny$\{2,3,234\}$};
    \node (4) at (9,1) {\tiny$\{1,3,134\}$};
    \node (5) at (0,3) {\tiny$\{2,4,234\}$}
;
    \node (6) at (9,3) {\tiny$\{1,4,134\}$};
    \node (7) at (2,6) {\tiny$\{2,4,124\}$};
    \node (8) at (7,6) {\tiny$\{1,4,124\}$};
    \node (9) at (4.5,7) {\tiny$\{1,2,124\}$};
    \node (10) at (4.5,3) {\tiny$\{1,2,3\}$};

    \draw (1)--(2)--(4)--(6)--(8)--(9)--(7)--(5)--(3)--(1);
    \draw (1)--(5);
    \draw (2)--(6);
    \draw (7)--(8);
    \draw [dashed] (3)--(10);
    \draw [dashed] (4)--(10);
    \draw [dashed] (9)--(10);

\end{tikzpicture}
\caption{The nestohedron from \cref{ex:nestohedron}.}
\label{fig:nestohedron}
\end{center}
\end{figure}
\end{example}

There are two special families of vectors $\bb$ for which $\Xnb$ is a translation by $\ee_{n+1}$ of a nestohedron whose face structure can be determined without much more work.

\begin{definition}
A \emph{stellohedron} $\St_n$ is a nestohedron for the building set
\[  \setof{\setof{1}, \setof{2}, \dotsc, \setof{n}}
        \cup \setof{S \cup \setof{n+1}}_{S \subseteq [n]} .\]
\end{definition}

\begin{proposition} \label{prop:TwoBuildings}
    Let $\bb\in\Z_{>0}^n$.
    Let $y_I$ ($I\subseteq [n+1]$) be the associated coefficients given in \cref{prop: y-expression}. 
    Assume that $y_I>0$ for all sets $I$ for which $\card{I} = 1$ and for all sets $I$ for which $n+1\in I$ and $\card{I}\ge 3$.
    \begin{enumerate}[label=(\alph*), ref={\thetheorem(\alph*)}]
        \item
        \label[subprop]{subprop: TwoBuildings: y_I > 1}%
        If $y_I > 0$ for all sets $I$ of the form $I = \setof{i, n+1}$ ($i \le n$), then the $\bb$-parking-function polytope $\Xn$ is combinatorially equivalent to the stellohedron $\mathsf{St}_n$.

        \item
        \label[subprop]{subprop: TwoBuildings: y_I = 1}%
        If $y_I = 0$ for all sets $I$ of the form $I=\setof{i, n+1}$ ($i\le n$), then $\Xn$ is combinatorially equivalent to the nestohedron for the building set
        \begin{equation*}
            \setof{\setof{1}, \setof{2}, \dotsc, \setof{n}}
            \cup
            \setof{S \cup \setof{n+1}}_{S \subseteq [n], \, \card{S} \neq 1}.
        \end{equation*}%
    \end{enumerate}
\end{proposition}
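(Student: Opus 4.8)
The plan is to combine three facts already available in the excerpt: (i) the explicit formula for the coefficients $y_I$ from \cref{prop: y-expression}; (ii) the fact that $\Xnb$ is the $\Y$-generalized permutahedron $\sum_{I\subseteq[n+1]} y_I \bigtriangleup_I$; and (iii) the theorem from \cite[Theorem 7.4]{Pos} that the face lattice of a nestohedron depends only on its building set, not on the positive parameters $y_I$. The strategy in both parts is the same: identify, from the hypotheses on which $y_I$ vanish, the collection $\B$ of sets on which $y_I > 0$; verify that $\B$ is a building set on $[n+1]$; conclude that $\Xnb$ is (a translate of) the nestohedron $P_\B$; and hence that $\Xn$ and $\Xnb$ — which are affinely isomorphic, and in particular combinatorially equivalent — are combinatorially equivalent to $P_\B$.

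First I would assemble the support of the $y_I$. By \cref{prop: y-expression}, $y_{\setof{i}} = 1 > 0$ for $1 \le i \le n$ and $y_{\setof{n+1}} = 0$; for $n+1\notin I$ with $\card{I}\ge 2$ we have $y_I = 0$; and for $n+1\in I$ with $\card{I}\ge 3$ the hypothesis of the proposition says $y_I > 0$. The only sets whose status depends on which case of the proposition we are in are the pairs $I = \setof{i, n+1}$, for which $y_I = b_1 - 1$. In Case~(a) we assume $y_{\setof{i,n+1}} = b_1 - 1 > 0$ (equivalently $b_1 \ge 2$), so the support of $y$ is exactly
\begin{equation*}
    \B \;=\; \setof{\setof{1}, \dotsc, \setof{n}} \;\cup\; \setof{S\cup\setof{n+1}}_{S\subseteq[n]},
\end{equation*}
which is precisely the building set defining the stellohedron $\St_n$. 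In Case~(b) we assume $b_1 - 1 = 0$, so the pairs $\setof{i,n+1}$ drop out, and the support becomes
\begin{equation*}
    \B' \;=\; \setof{\setof{1}, \dotsc, \setof{n}} \;\cup\; \setof{S\cup\setof{n+1}}_{S\subseteq[n],\ \card{S}\ne 1},
\end{equation*}
the building set named in the statement. In each case I would check conditions (B1) and (B2) of \cref{def: building set}: (B2) is immediate since all singletons of $[n+1]$ are present (the singleton $\setof{n+1}$ appears as $S\cup\setof{n+1}$ with $S=\emptyset$, and $\card{\emptyset}\ne 1$ so it survives in Case~(b) too); for (B1), two sets with nonempty intersection are either both among $\setof{\setof{1},\dotsc,\setof{n}}$ (then they coincide) or at least one contains $n+1$, in which case their union contains $n+1$ and has the required form — in Case~(b) one must note that the union of a set containing $n+1$ with anything else has the form $S\cup\setof{n+1}$ with $\card S \ne 1$ automatically once it properly contains a singleton or another such set, so $\B'$ is closed under (B1).

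Having identified $\B$ (resp.\ $\B'$) as a building set whose associated $\Y$-coefficients are exactly the $y_I$ of $\Xnb$, the polytope $\Xnb$ is by \cref{def:nestohedron} the nestohedron $P_\B$ (resp.\ $P_{\B'}$). Since $\Xn$ is affinely (indeed unimodularly) isomorphic to $\Xnb$ via the lifting $\ell$, and affine isomorphisms preserve face lattices, $\Xn$ is combinatorially equivalent to $P_\B$, which by \cite[Theorem 7.4]{Pos} depends only on the building set; in Case~(a) this is by definition the stellohedron $\St_n$, and in Case~(b) it is the nestohedron for $\B'$, as claimed. I expect the main (though modest) obstacle to be the careful verification of (B1) for $\B'$ in Case~(b): one has to confirm that removing the $\card{S}=1$ sets does not destroy closure under union of intersecting members, i.e., that the union of two intersecting members of $\B'$ is again a set of the form $S\cup\setof{n+1}$ with $\card S\ne 1$ (or a singleton, which is impossible once two distinct members intersect). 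This is where a short case analysis on the sizes and on whether $n+1$ lies in the sets is needed; everything else is bookkeeping with \cref{prop: y-expression}.
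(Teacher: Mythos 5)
Your overall strategy is the same as the paper's: read off the support of the coefficients $\{y_I\}$ from \cref{prop: y-expression}, observe that it is (essentially) a building set, and invoke the fact that the face lattice of $\sum_{I\in\B} y_I\bigtriangleup_I$ depends only on $\B$ and not on the positive values of the $y_I$. There is, however, one concrete error in the bookkeeping. You assert that in Case~(a) the support of $\{y_I\}$ is \emph{exactly} $\{\{1\},\dots,\{n\}\}\cup\{S\cup\{n+1\}\}_{S\subseteq[n]}$, and you later use the membership of $\{n+1\}$ (as $S\cup\{n+1\}$ with $S=\emptyset$) when verifying axiom (B2). But \cref{prop: y-expression} gives $y_{\{n+1\}}=0$, so the singleton $\{n+1\}$ is \emph{not} in the support. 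Hence the support is not the stellohedron building set; indeed it is not a building set on $[n+1]$ at all, since (B2) fails for the element $n+1$. Your sentence ``$\Xnb$ is by \cref{def:nestohedron} the nestohedron $P_{\B}$'' is therefore false as literally stated, and the same issue recurs in Case~(b).

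The repair is short and is exactly the step the paper's proof supplies: adjoin the missing summand $\bigtriangleup_{\{n+1\}}$ to the Minkowski decomposition. Since $\bigtriangleup_{\{n+1\}}$ is a single point, adding it only translates the polytope by $\ee_{n+1}$ and does not change its face lattice; after this translation the index collection does contain $\{n+1\}$ and equals the stated building set (the stellohedron building set in Case~(a), and the collection displayed in part~(b) in Case~(b)), so the conclusion follows from \cite[Theorem 7.4]{Pos}. With that one-line correction your argument coincides with the paper's; your remaining verifications (closure of the two collections under (B1), and the fact that the lifting $\ell$ preserves the face lattice) are correct and are left implicit in the paper.
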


\begin{proof}
    For \cref{subprop: TwoBuildings: y_I > 1}, observe that $\Xn$ is in this case combinatorially equivalent to the following Mikowski sum:
    \begin{equation*}
        \Xn
        \cong
        \Xnb
        \cong
        \sum_{i=1}^n
        \bigtriangleup_{\setof{i}}
        +
        \sum_{S\subseteq [n], \, S \neq \emptyset}
        \bigtriangleup_{S \cup \setof{n+1}}.
    \end{equation*}
    Note that the indices of the standard simplices in the above Minkowski sum range over the collection $\{\{1\},\{2\},\ldots,\{n\}\}\cup\{S\cup\{n+1\}\}_{\emptyset\neq S\subseteq[n]}$.  
    If we add $\setof{n+1}$ to this collection, the building set that results corresponds to the stellohedron \cite[Sec 10.4]{PRW}.  
    Adding $\bigtriangleup_{\setof{n+1}}$ to the Minkowski sum only translates the polytope by $\ee_{n+1}$ and does not change the face structure.  
    Hence, we have
    \begin{equation*}
        \Xn
        \cong
        \sum_{i=1}^n 
        \bigtriangleup_{\setof{i}}
        +
        \sum_{S\subseteq [n], \, S \neq \emptyset}
        \bigtriangleup_{S \cup \setof{n+1}}
        +
        \bigtriangleup_{\setof{n+1}}
        \cong
        \mathsf{St}_n.        
    \end{equation*}

    For \cref{subprop: TwoBuildings: y_I = 1}, we have in this case that $b_{1} = 1$, and so, similarly to the previous case,
    \begin{align*}
        \Xn
        &\cong
        \Xnb  
        \cong
        \sum_{i=1}^n
        \bigtriangleup_{\setof{i}}
        +
        \sum_{S \subseteq [n], \, \card{S} \ge 2}
        \bigtriangleup_{S \cup \setof{n+1}} \\
        &\cong 
        \sum_{i=1}^n 
        \bigtriangleup_{\setof{i}}
        +
        \sum_{S\subseteq [n], \, \card{S} \ge 2}
        \bigtriangleup_{S \cup \setof{n+1}}
        +
        \bigtriangleup_{\setof{n+1}}.
    \end{align*}
    The indices of the standard simplices in the above Minkowski sum range over the collection
    \[
       \setof{\setof{1},\setof{2}, \dotsc, \setof{n}}
       \cup
       \setof{S \cup \setof{n+1}}_{S\subseteq[n], \, \card{S} \neq 1},
    \]
    which is also a building set.  
    Therefore, $\Xn$ is combinatorially equivalent to the nestohedron corresponding to this building set.
\end{proof}

We will see later in \cref{cor:combinatorial_equivalence} that the two face structures in \cref{prop:TwoBuildings} are the only combinatorial types that can occur on $\Xn$ for \emph{any} $\bb\in\mathbb{Z}_{>0}^n$, even if the corresponding $y_I$ do not satisfy the positivity conditions of \cref{prop:TwoBuildings}.  
We introduce the notation
\begin{align*}
    \B_{\St_n}
    &\deftobe
    \{\{1\},\{2\},\ldots,\{n\}\}\cup\{S\cup\{n+1\}\}_{S\subseteq[n]}, \\
    \B_{\PF_n} 
    &\deftobe 
    \{\{1\},\{2\},\ldots,\{n\}\}\cup\{S\cup\{n+1\}\}_{S\subseteq[n], 
    \, \card{S} \neq 1},
\end{align*}
for the two building sets in \cref{prop:TwoBuildings}.
We also introduce some terminology from \cite{BHMPW2020} to characterize the nested sets.  
We say a subset $I \subseteq [n]$ and a (nonempty) chain $\mathcal{F} = (A_1 \subsetneq \dotsb \subsetneq A_k)$ in the Boolean lattice $(2^{[n]}, \subseteq)$ are \defing{compatible}, denoted by $I \le \mathcal{F}$, if $I \subseteq \min(\mathcal{F}) = A_1$. 
If $\mathcal{F}$ is the empty chain~$\emptyset$, then we consider every subset $I$ to be compatible with $\emptyset$.  
The nested sets for $\B_{\St_n}$ and $\B_{\PF_n}$ are characterized in the next propositions.

\begin{proposition}[{\hspace{1 sp}\cite[Prop. 3.7]{Liao2022Codes}}]
    The nested sets for $\B_{\St_n}$ have form $\setof{\setof{i}}_{i \in I} \cup \setof{A \cup \setof{n+1}}_{A \in \mathcal{F}},$ where $I \subseteq [n]$ and $\mathcal{F} \subset 2^{[n]}-\{[n]\}$ is a chain such that $I\le\mathcal{F}$ is a compatible pair.
\end{proposition}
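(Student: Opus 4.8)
The plan is to prove both inclusions directly from the definition of a nested set, using conditions (N1) and (N2) together with one structural feature of $\B_{\St_n}$: since $[n+1] = [n] \cup \setof{n+1}$ is its unique maximal element, the vertex set $\B_{\St_n} \setminus (\B_{\St_n})_{\max}$ of the nested set complex consists of the singletons $\setof{1}, \dotsc, \setof{n}$ together with every $A \cup \setof{n+1}$ with $A \subsetneq [n]$; moreover $\B_{\St_n}$ contains \emph{every} subset of $[n+1]$ that contains $n+1$. This last fact is the lever that makes (N2) do its work.

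First I would show every nested set $N$ of $\B_{\St_n}$ has the asserted form. Partition $N$ into its singleton members $\setof{\setof{i}}_{i \in I}$ with $I \subseteq [n]$ and its members containing $n+1$, written $\setof{A \cup \setof{n+1}}_{A \in \mathcal{F}}$; since $N$ avoids $(\B_{\St_n})_{\max} = \setof{[n+1]}$ we get $[n] \notin \mathcal{F}$, so $\mathcal{F} \subseteq 2^{[n]} - \setof{[n]}$. Any two members $A \cup \setof{n+1}$ and $A' \cup \setof{n+1}$ share the element $n+1$, hence are comparable under inclusion by (N1); thus $\mathcal{F}$ is a chain. For compatibility, assume $\mathcal{F} \ne \emptyset$, set $A_1 \deftobe \min(\mathcal{F})$, and take $i \in I$. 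By (N1) the sets $\setof{i}$ and $A_1 \cup \setof{n+1}$ are nested or disjoint; if disjoint, then $\setof{i}, A_1 \cup \setof{n+1}$ are two pairwise-disjoint members whose union $(A_1 \cup \setof{i}) \cup \setof{n+1}$ lies in $\B_{\St_n}$, contradicting (N2). Hence $\setof{i} \subseteq A_1 \cup \setof{n+1}$, and since $i \ne n+1$ we conclude $i \in A_1$, i.e.\ $I \subseteq \min(\mathcal{F})$.

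For the converse, I would take $I \subseteq [n]$ and a chain $\mathcal{F} \subset 2^{[n]} - \setof{[n]}$ with $I \le \mathcal{F}$ and verify that $N \deftobe \setof{\setof{i}}_{i \in I} \cup \setof{A \cup \setof{n+1}}_{A \in \mathcal{F}}$ is nested. For (N1): distinct singletons are disjoint; two sets $A \cup \setof{n+1}$, $A' \cup \setof{n+1}$ are comparable since $\mathcal{F}$ is a chain; and for $i \in I$, $A \in \mathcal{F}$ we have $i \in \min(\mathcal{F}) \subseteq A$, so $\setof{i} \subseteq A \cup \setof{n+1}$. For (N2): any two members containing $n+1$ intersect, so a pairwise-disjoint subfamily $J_1, \dotsc, J_k$ with $k \ge 2$ includes at most one such set. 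If it includes none, the $J_\ell$ are distinct singletons inside $[n]$, so their union has at least two elements and is not a singleton, hence not in $\B_{\St_n}$. If it includes one, say $J_k = A \cup \setof{n+1}$ with $A \in \mathcal{F}$, then some $J_\ell = \setof{i}$ with $i \in I$ is disjoint from it, forcing $i \notin A$; but $i \in \min(\mathcal{F}) \subseteq A$, a contradiction, so this case never arises. In either case (N2) holds.

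I expect the (N1)/(N2) checks in the converse to be routine bookkeeping. The one genuinely load-bearing step — and the only place (N2) is used nontrivially — is the extraction of $I \subseteq \min(\mathcal{F})$ in the forward direction: the point is that $\B_{\St_n}$ is closed under adding elements of $[n]$ to any set already containing $n+1$, so a singleton $\setof{i}$ that ``escaped'' $\min(\mathcal{F})$ would be disjoint from $\min(\mathcal{F}) \cup \setof{n+1}$ with their union back in $\B_{\St_n}$, which (N2) forbids.
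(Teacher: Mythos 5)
Your proof is correct, and the paper itself gives no proof of this statement (it is cited from \cite[Prop.~3.7]{Liao2022Codes}); your argument is essentially the same as the one the paper does spell out for the sibling \cref{prop:nestedPF} about $\B_{\PF_n}$ — partition $N$ into singletons and sets containing $n+1$, deduce the chain property from (N1) since any two of the latter meet in $n+1$, and force $I\subseteq\min(\mathcal{F})$ via (N2) using the closure of $\B_{\St_n}$ under adjoining elements of $[n]$ to a set containing $n+1$. You also correctly handle the edge case $\emptyset\in\mathcal{F}$ (i.e.\ $\setof{n+1}\in N$), which forces $I=\emptyset$.
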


The building set and the collection of nested sets in \cref{ex:nestohedron} is the case of $n=3$ of the proposition below.

\begin{proposition} \label{prop:nestedPF}
    The nested set for $\B_{\PF_n}$ is of the form $\setof{\setof{i}}_{i \in I} \cup \setof{A}_{A \in \mathcal{F}}$, where $I \subseteq [n+1]$ and $\card{I}\le 2$ if $n+1 \in I$, and $\mathcal{F} \subset 2^{[n+1]} - \setof{[n+1]}$ is a chain such that $\card{\min(\mathcal{F})} \ge 3$ and $n+1 \in \min(\mathcal{F})$, and $I \le \mathcal{F}$ is a compatible pair.
\end{proposition}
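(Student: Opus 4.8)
The plan is to verify the two defining conditions (N1) and (N2) of a nested set directly, exploiting the one structural feature that makes $\B_{\PF_n}$ special. Rewriting its definition, $\B_{\PF_n}=\setof{\setof{i}\sst i\in[n+1]}\cup\setof{S\cup\setof{n+1}\sst S\subseteq[n],\ \card{S}\ge 2}$; in particular $\B_{\PF_n}$ contains \emph{no} set of cardinality exactly $2$, every non-singleton member of it contains $n+1$, and (for $n\ge 2$, which holds throughout since $b_1=1$ forces $n\ge 2$) the set $[n+1]$ is the unique maximal member of $\B_{\PF_n}$ — so the requirement $N\subseteq\B_{\PF_n}\setminus\B_{\max}$ is just $[n+1]\notin N$.

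For the ``if'' direction, let $N=\setof{\setof{i}}_{i\in I}\cup\setof{A}_{A\in\mathcal F}$ be of the stated form. Condition (N1) is immediate: the members of $\mathcal F$ form a chain; two distinct singletons are disjoint; and each singleton $\setof{i}$ with $i\in I$ satisfies $\setof{i}\subseteq\min(\mathcal F)\subseteq A$ for every $A\in\mathcal F$. For (N2), note that if $\mathcal F\neq\emptyset$ then no member of $\mathcal F$ is disjoint from another member of $\mathcal F$ (they share $n+1$) nor from any singleton of $N$; hence a pairwise disjoint subfamily of $N$ of size $\ge 2$ consists only of singletons, and its union is a subset of $I$ of size $k\ge 2$. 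Such a set lies in $\B_{\PF_n}$ only if it contains $n+1$ and has size $\ge 3$; but $n+1\in I$ forces $\card{I}\le 2$, hence $k=2$, and $n+1\notin I$ keeps $n+1$ out of the union. So (N2) holds and $N$ is a nested set.

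For the converse, let $N$ be an arbitrary nested set; let $\mathcal F$ consist of the non-singleton members of $N$, and let $I\subseteq[n+1]$ be the set of $i$ with $\setof{i}\in N$, so that $N=\setof{\setof{i}}_{i\in I}\cup\setof{A}_{A\in\mathcal F}$. Each $A\in\mathcal F$ is of the form $S\cup\setof{n+1}$ with $\card{S}\ge 2$, so $n+1\in A$ and $\card{A}\ge 3$; and $[n+1]\notin N$ gives $\mathcal F\subset 2^{[n+1]}-\setof{[n+1]}$. Any two members of $\mathcal F$ share $n+1$, so (N1) forces one to contain the other; thus $\mathcal F$ is a chain, whence $n+1\in\min(\mathcal F)$ and $\card{\min(\mathcal F)}\ge 3$ when $\mathcal F\neq\emptyset$. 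To obtain $I\le\mathcal F$, suppose $\mathcal F\neq\emptyset$ and pick $i\in I$ with $i\notin\min(\mathcal F)$; then $\setof{i}$ and $\min(\mathcal F)$ are disjoint members of $N$ whose union $(\setof{i}\cup S)\cup\setof{n+1}$ has $\card{\setof{i}\cup S}\ge 3$ and so lies in $\B_{\PF_n}$, contradicting (N2); hence $I\subseteq\min(\mathcal F)$. Lastly, if $n+1\in I$ and $I$ contained distinct $j_1,j_2\in[n]$, then $\setof{n+1},\setof{j_1},\setof{j_2}$ would be three pairwise disjoint members of $N$ whose union $\setof{j_1,j_2}\cup\setof{n+1}$ lies in $\B_{\PF_n}$, again contradicting (N2); so $\card{I}\le 2$ whenever $n+1\in I$. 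Thus $N$ has the claimed form.

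I do not expect a genuine obstacle here — the argument is a short case analysis. The two points needing care are bookkeeping: recording the singleton $\setof{n+1}\in\B_{\PF_n}$ via the condition ``$n+1\in I$'' rather than via $\mathcal F$, and checking at each invocation of (N2) that the relevant union really lands in $\B_{\PF_n}$, which is precisely where the hypothesis ``$\card{S}\ge 2$'' in the definition of $\B_{\PF_n}$ (equivalently, the absence of $2$-element members) is used. \Cref{ex:nestohedron} is the case $n=3$ and can serve as a running check.
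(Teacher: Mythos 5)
Your proof is correct and takes essentially the same approach as the paper's: the decomposition of $N$ into singletons and non-singleton members, the chain property derived from the shared element $n+1$, and the two invocations of (N2) to get $I\subseteq\min(\mathcal F)$ and $\card{I}\le 2$ when $n+1\in I$ all match the paper's argument. Your detailed verification of the ``if'' direction simply fills in what the paper dismisses as routine.
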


\begin{proof}
    Let $N$ be a nested set for $\B_{\PF_n}$.  
    Let $I$ be the subset consisting of $i \in [n+1]$ for any singleton $\setof{i}\in N$ and $\mathcal{F}$ be the collection of all subsets in $N$ that are not a singleton.  
    By the definition of nested set for $\B_{\PF_n}$, we must have $n+1 \in A$ and $\card{A}\ge 3$ for any $A \in \mathcal{F}$.  
    Pick any $A_1, A_2 \in \mathcal{F}$, we always have $n+1 \in A_1 \cap A_2 \neq \emptyset$.  
    Hence one of $A_{1}$ or
    $A_{2}$ must be a subset of the other and therefore $\mathcal{F}$ is a chain.  
    For any $i \in I$, we must have $i \in A$ for all $A \in \mathcal{F}$; otherwise, there exists $A' \in \mathcal{F}$ such that $\setof{i}$ and $A'$ are disjoint subsets in $N$ with $A' \cup \setof{i} \in \B_{\PF_n}$, which violates the condition (N2) in the definition of nested sets.
    This implies $I \subseteq \min(\mathcal{F})$, so $I \le \mathcal{F}$.  
    If $n+1 \in I$ and $\card{I} \ge 3$, then the union of all singletons in $I$, which is $I$ itself, is in $\B_{PF_n}$, which again violates condition (N2).  
    Thus, we have $\card{I} \le 2$ if $n+1 \in I$.  
    Conversely, it is routine to check that such a collection $\setof{\setof{i}}_{i\in I} \cup \setof{A}_{A \in \mathcal{F}}$ satisfies the two axioms of the definition of nested sets.
\end{proof}

In order to understand the face lattice of $\Xn$ in terms of building sets, we now generalize the notation $U_{i}$ for certain facets of $\Xn$ introduced immediately before \cref{lem:facets containing a vertex} by setting
\begin{equation*}
    U_{I} \deftobe \Biggbraces{(x_{1}, \dotsc, x_{n}) \in \Xn \sst \sum_{i
    \in I} x_{i} = \sum_{i = n - \card{I} + 1} S_{i}},
\end{equation*}
for all nonempty $I \subseteq [n]$, except in the case $\card{I} = n-1$ if $b_{1} = 1$.  
Thus, $U_{I}$ is the facet of $\Xn$ defined by the \eqref{x-parking}~inequality for the subset $I$. 
In terms of this notation, the facets $U_j$ defined prior to \cref{lem:facets containing a vertex} are given by $U_j = U_{I_j}$, where $I_j \deftobe \setof{j, j+1, \dotsc, n}$.
Comparing the supporting hyperplanes of the facets of $\Xn$ in \cref{thm:inequality_description} and the two building sets in \cref{prop:TwoBuildings}, we immediately arrive to the following lemma.

\begin{lemma} \label{lem:facets-bijection}
    Let $\B \deftobe \B_{\PF_n}$ if $b_{1} = 1$, and let $\B \deftobe \B_{\St_n}$ if $b_{1} \ge 2$.  
    The facets of $\Xn$ are in bijection with the elements of $\mathcal{B}\setminus\mathcal{B}_{max}=\mathcal{B}\setminus\{[n+1]\}$.  
    If $b_1=1$, then the bijection is as follows:
    \begin{alignat*}{2}
        L_{i} 
        & \longleftrightarrow
        \setof{i} 
        && \qquad \text{for $1\le i\le n$}, \\
        U_{[n]\setminus S}
        & \longleftrightarrow
        S \cup \setof{n+1},
        && \qquad \text{for $S\subsetneq [n]$, $\card{S} \ne 1$}.
    \end{alignat*}
    If $b_1\ge 2$, then the bijection is as follows:
    \begin{alignat*}{2}
        L_{i} 
        & \longleftrightarrow
        \setof{i} 
        && \qquad \text{for $1\le i\le n$}, \\
        U_{[n]\setminus S}
        & \longleftrightarrow
        S \cup \setof{n+1},
        && \qquad \text{for $S\subsetneq [n]$}.
    \end{alignat*}
\end{lemma}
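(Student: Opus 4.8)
The plan is to read off the two sides of the asserted bijection from results already established and match them term by term. On the polytope side, \cref{subthm: facet-defining inequalities} gives the facets of $\Xn$ explicitly: the $n$ facets $L_{1}, \dotsc, L_{n}$ cut out by the inequalities $x_{i} \ge 1$, together with the facets $U_{I}$ cut out by the \eqref{x-parking}~inequality indexed by a nonempty $I \subseteq [n]$, where $I$ ranges over \emph{all} such subsets when $b_{1} \ge 2$ and over those with $\card{I} \ne n-1$ when $b_{1} = 1$. On the building-set side, I would expand $\B \setminus \B_{\max} = \B \setminus \setof{[n+1]}$: in both $\B_{\St_n}$ and $\B_{\PF_n}$ the unique inclusion-maximal element is $[n] \cup \setof{n+1} = [n+1]$ (for $\B_{\PF_n}$ this uses $n \ge 2$, so that the block $[n]$ itself satisfies $\card{[n]} \ne 1$ and hence $[n+1] \in \B_{\PF_n}$), so deleting it leaves $\setof{\setof{i}}_{i \in [n]} \cup \setof{S \cup \setof{n+1}}_{S \subsetneq [n]}$, subject to the extra restriction $\card{S} \ne 1$ in the $\B_{\PF_n}$ case.

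Next I would set up the correspondence $L_{i} \mapsto \setof{i}$ and $U_{I} \mapsto ([n] \setminus I) \cup \setof{n+1}$, equivalently $U_{[n] \setminus S} \mapsto S \cup \setof{n+1}$, and verify it is a bijection onto the sets just listed. Complementation $I \mapsto [n] \setminus I$ is a bijection from the nonempty subsets of $[n]$ onto the proper subsets $S \subsetneq [n]$, and it carries the constraint $\card{I} \ne n-1$ to $\card{S} \ne 1$; hence the $U$-facets map bijectively onto $\setof{S \cup \setof{n+1}}_{S \subsetneq [n]}$ when $b_{1} \ge 2$, and onto the subfamily with $\card{S} \ne 1$ when $b_{1} = 1$. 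Meanwhile the $L$-facets map bijectively onto the singletons $\setof{1}, \dotsc, \setof{n}$, and the two target families are disjoint since exactly one of them consists of sets containing $n+1$. Comparing with the expansion of $\B \setminus \setof{[n+1]}$ from the previous paragraph then gives precisely the two displayed bijections.

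Finally, the phrase ``comparing supporting hyperplanes'' reflects that this matching is not merely a counting coincidence: under the lifting $\ell$, subtracting the \eqref{x-parking}~equation defining $U_{[n] \setminus S}$ from $x_{1} + \dotsb + x_{n+1} = B$ turns it into $\sum_{i \in S \cup \setof{n+1}} x_{i} = z_{S \cup \setof{n+1}}$, which is exactly the hyperplane attached to the index $S \cup \setof{n+1}$ in the presentation $\Xnb = P_{n+1}^{\cZ}(\setof{z_I})$ of \cref{prop: Z-expression} (and similarly $L_{i}$ lifts to $x_{i} = z_{\setof{i}}$), so the bijection identifies each facet with the building-set label of its supporting hyperplane. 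I do not expect any genuine obstacle here; the only points needing a moment's care are the two bookkeeping checks flagged above --- that $[n+1]$ is the unique maximal element being removed on each building-set side, and that the exceptional subsets correspond ($\card{I} \ne n-1 \Leftrightarrow \card{S} \ne 1$) in the $b_{1} = 1$ case.
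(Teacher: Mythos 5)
Your proposal is correct and matches the paper's approach: the paper gives no separate proof of this lemma, stating only that it follows immediately by comparing the facet description in \cref{thm:inequality_description} with the two building sets, which is precisely the term-by-term matching (via $I \mapsto [n]\setminus I$, carrying $\card{I}\ne n-1$ to $\card{S}\ne 1$) that you carry out. Your additional check that the lifted supporting hyperplane of $U_{[n]\setminus S}$ is the one indexed by $S\cup\setof{n+1}$ in $P_{n+1}^{\cZ}(\setof{z_I})$ is consistent with the computation already done in \cref{prop: Z-expression}.
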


For any $B\in\mathcal{B}\setminus\{[n+1]\}$, we denote by $F_B$ the corresponding facet of $\Xn$ under the bijection in \cref{lem:facets-bijection}.

\begin{lemma} \label{lem:vertices-bijecion}
    Let $\B \deftobe \B_{\PF_n}$ if $b_{1} = 1$, and let $\B \deftobe 
    \B_{\St_n}$ if $b_{1} \ge 2$. 
    Then the map 
    \begin{equation*}
        N
        \longmapsto
        \bigcap_{B\in N} F_B
    \end{equation*}
    is a bijection between the maximal nested sets $N$ (\emph{i.e.}, the facets of $\bigtriangleup_{\B}$) and the vertices of $\Xn$.
\end{lemma}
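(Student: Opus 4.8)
The plan is to analyze the map $\Phi$ that sends each vertex $\bv$ of $\Xn$ to the set $\Phi(\bv)\deftobe\setof{B\in\B\setminus\setof{[n+1]}\sst \bv\in F_B}$ of building-set elements corresponding, under \cref{lem:facets-bijection}, to the facets of $\Xn$ containing $\bv$, and to show that $\Phi$ is a bijection from $\Ver(\Xn)$ onto the maximal nested sets of $\B$, with inverse exactly the displayed map $N\mapsto\bigcap_{B\in N}F_B$. The structural input is that $\Xn$ is a simple $n$-polytope (\cref{subthm: Xn is simple,prop: Xn is n-dimensional}): hence every vertex lies on exactly $n$ facets, no face of positive dimension lies on $n$ facets, and consequently a vertex is recovered as the intersection of the $n$ facets through it. In particular $\Phi$ is well defined with $\card{\Phi(\bv)}=n$ for every vertex $\bv$, and $\Phi$ is injective: if $\Phi(\bv)=\Phi(\bv')$ then $\bv$ and $\bv'$ lie on the same $n$ facets, whose intersection is a single vertex.

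First I would check that $\Phi(\bv)$ is always a maximal nested set. Write $\bv=\pi(\y_k)$. By the $\SSS$-equivariance of the facet structure, \cref{lem:facets containing a vertex} gives that the $n$ facets through $\bv$ are $L_{\pi(1)},\dotsc,L_{\pi(k)},U_{\pi(I_{k+1})},\dotsc,U_{\pi(I_n)}$ when $b_1\ge 2$ or ($b_1=1$ and $k\ge 2$), and $L_{\pi(1)},U_{[n]},U_{\pi(I_3)},\dotsc,U_{\pi(I_n)}$ when $b_1=1$ and $k\in\setof{0,1}$. Translating through \cref{lem:facets-bijection} (using $L_i\leftrightarrow\setof{i}$ and $U_{I_j}\leftrightarrow\setof{1,\dotsc,j-1}\cup\setof{n+1}$, transported by $\pi$), one reads off that $\Phi(\bv)=\setof{\setof{\pi(j)}}_{1\le j\le k}\cup\setof{\pi(\setof{1,\dotsc,j-1})\cup\setof{n+1}}_{k+1\le j\le n}$ in the first case and $\Phi(\bv)=\setof{\setof{\pi(1)},\setof{n+1}}\cup\setof{\pi(\setof{1,\dotsc,j-1})\cup\setof{n+1}}_{3\le j\le n}$ in the second. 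In either case this is of the form prescribed by the nested-set description of $\B_{\St_n}$ (if $b_1\ge 2$) or by \cref{prop:nestedPF} (if $b_1=1$): take $I=\pi(\setof{1,\dotsc,k})$, resp.\ $I=\setof{\pi(1),n+1}$, and let $\mathcal{F}$ be the chain formed by the second family (which is genuinely a chain, with $\min(\mathcal{F})$ of size $\ge 3$ in the $b_1=1$ case), and verify the compatibility $I\le\mathcal{F}$ directly. Since $\card{\Phi(\bv)}=n$ and the nested set complex $\bigtriangleup_{\B}$ is pure of dimension $n-1$ (being dual to the $n$-dimensional simple nestohedron $P_\B$, cf.\ \cite[Theorem 7.4]{Pos}), $\Phi(\bv)$ is maximal.

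For surjectivity I would run the same correspondence in reverse: given a maximal nested set $N$, write it in the characterized form $\setof{\setof{i}}_{i\in I}\cup\setof{A}_{A\in\mathcal{F}}$, recover from $\card I$ and the chain $\mathcal{F}$ a permutation $\pi$ and an index $k$, and check $\Phi(\pi(\y_k))=N$. (Alternatively, since $\Phi$ is already known to be an injection, it suffices to observe that $\card{\Ver(\Xn)}$, as computed in \cref{cor:number_vertices}, equals the number of maximal nested sets of $\B$, i.e.\ the vertex count of $\St_n$ when $b_1\ge 2$ and of $\PF_n$ when $b_1=1$, forcing $\Phi$ to be onto.) Either way $\Phi$ is a bijection. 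Finally, if $\Phi(\bv)=N$ then $\bv\in\bigcap_{B\in N}F_B$; since $\card N=n$ and $\Xn$ is a simple $n$-polytope, $\bigcap_{B\in N}F_B$ is $0$-dimensional, hence equals $\setof{\bv}$. Thus $N\mapsto\bigcap_{B\in N}F_B$ is precisely $\Phi^{-1}$, which proves the lemma.

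I expect the main obstacle to be the bookkeeping in the second paragraph: correctly transporting the explicit list of facets through $\pi(\y_k)$ in \cref{lem:facets containing a vertex} across the bijection of \cref{lem:facets-bijection} while tracking the $\SSS$-action, and in particular handling the degenerate case $b_1=1$, $k\in\setof{0,1}$ (where $\y_0=\y_1$, the facet $U_2$ is absent, and the singleton $\setof{n+1}$ enters $I$) so that the resulting set matches \cref{prop:nestedPF} exactly. The purity of $\bigtriangleup_{\B}$ and the simple-polytope facts are standard and present no difficulty.
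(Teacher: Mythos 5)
Your argument is correct, and it establishes the same underlying correspondence as the paper, but run in the opposite direction. The paper's proof starts from a maximal nested set $N=\setof{\setof{i}}_{i\in I}\cup\mathcal{F}$ and explicitly \emph{constructs} the point $\bigcap_{B\in N}F_B$ by placing $1$'s in the positions of $I$ and the values $S_n, S_{n-1},\dotsc$ in the positions given by the successive set differences of the chain $\mathcal{F}$, then observes that the result is a permutation of some $\y_k$ and hence a vertex by \cref{subthm: vertices of Xn}; bijectivity is left largely implicit. You instead start from a vertex $\pi(\y_k)$, read off its $n$ incident facets from \cref{lem:facets containing a vertex} (which the paper's proof does not use), transport them through \cref{lem:facets-bijection}, and verify the result is a facet of $\bigtriangleup_{\B}$; injectivity and the identification of the displayed map as the inverse then follow cleanly from simplicity. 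This buys a more structural and complete bijectivity argument at the cost of needing purity of $\bigtriangleup_{\B}$ for maximality. One caveat on your ``alternative'' surjectivity argument by counting: for $b_1=1$ you would need the number of maximal nested sets of $\B_{\PF_n}$, and you cannot get it from \cref{prop:TwoBuildings} applied to $\bb=(1,\dotsc,1)$, since the positivity hypotheses there fail for that $\bb$ (the coefficients $y_I$ with $n+1\in I$, $\card{I}\ge 4$ vanish); you would have to count the maximal nested sets directly from \cref{prop:nestedPF}, which is routine but not free. Your primary route for surjectivity --- reversing the correspondence --- is exactly the paper's construction, so the proposal as a whole is sound.
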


\begin{proof}
    Consider $b_1=1$, the maximal nested sets for $\B_{\PF_n}$ are of the form $\{\{i\}\}_{i\in I}\cup\{A_1\subsetneq A_2\subsetneq\ldots\subsetneq A_{n-|I|}\}$ satisfying the condition of \cref{prop:nestedPF} and $|A_{i+1}|-|A_{i}|=1$ for all $i$.  
    If $n+1\notin I$, then the corresponding vertex can be obtained as a vector in $\mathbb{R}^n$ by placing $1$s in positions $I$, then placing $S_n$ in position $[n+1]-A_{n-|I|}$, placing $S_{n-1}$ in position $A_{n-|I|}-A_{n-|I|-1}$, and so on, in the final step placing $S_{|I|+1}$ in position $A_2-A_1$.  
    In this way, we fill out all $n$ entries of the vector in $\mathbb{R}^n$, and the resulting vector is the permutation of $\y_{\card{I}}$ which is a vertex by \cref{subthm: vertices of Xn}.  
    If $n+1\in I$, then $I=\{\{i\},\{n+1\}\}$ for some $i\in [n]$.  
    Then we obtain the vertex as a vector in $\mathbb{R}^n$ by placing $1=b_1=S_1$ in position $i$, placing $S_n$ in position $[n+1]-A_{n-2}$, placing $S_{n-1}$ in position $A_{n-2}-A_{n-3}$, etc., placing $S_3$ in position $A_2-A_1$, in the final step placing $S_2$ in position $A_1-\{i,n+1\}$.  
    In this case, the resulting vector is the permutation of $\y_1 = \y_0$ which is also a vertex by \cref{subthm: vertices of Xn}.  
    As a result, this gives a bijection between the maximal nested set for $\B_{\PF_n}$ and the vertices of $\Xn$ for $b_1=1$ via the intersections of facets corresponding to the maximal nested sets.

    For the case of $b_1\ge 2$, the proof is similar.
\end{proof}

\begin{remark}\label{rem:vertex_poset}
In both cases $b_1=1$ and $b_1\ge 2$, one can further consider the vertex posets $Q_v$ for every $v$ in the set of vertices $\Ver(\Xn)$ (see \cref{subsec:h-poly}) cooperating with the bijection in  \cref{lem:vertices-bijecion}. 
It gives the following commutative diagram
\[
\begin{tikzcd}
    \Ver(\mathfrak{X}_n(\bx)) \arrow[rr ] \arrow[rd] &         & \{\mbox{maximal nested sets for $\B$}\}\\
                                      &  \{Q_v: v\in \Ver(\mathfrak{X}_n(\bx))\} \arrow[ru].
\end{tikzcd}
\]
For example, the following are the correspondences between the weakly increasing vertices of $\Xn$ for $b_1=1$ and their vertex posets and the maximal nested sets:
\begin{equation*}
    \scalebox{.7}{
        \begin{tikzpicture}[scale=1]
            \footnotesize
            \node (1) at (0,0) {$1$};
            \node (2) at (0.5,0) {$2$};
            \node (dot) at (1.25,0) {$\ldots$};
            \node (3) at (2,0) {$k$};
            \node (4) at (1,1) {$n+1$};
            \node (5) at (1,2) {$k+1$};
            \node (6) at (1,3) {$k+2$};
            \node (7) at (1,4) {$n-1$};
            \node (8) at (1,5) {$n$};

            \node (v) at (-3,2) {$\left(\underset{k}{\underbrace{1,\ldots,1}},S_{k+1},\ldots,S_n\right)$};
            \node (N) at (5,2) {
            $\left\{
            \substack{
            \{1\},\ \{2\},\ldots,\{k\},\\
            [k]\cup\{n+1\},\\ [k+1]\cup\{n+1\},\\ \ldots,\\ [n-1]\cup\{n+1\}
            }\right\}$ };

            \draw (1)--(4);
            \draw (2)--(4);
            \draw (3)--(4);
            \draw (4)--(5)--(6);
            \draw [dotted] (6)--(7);
            \draw (7)--(8);

            \draw [<->] (v)--(0,2);
            \draw [<->] (2,2)--(N);
        \end{tikzpicture}
        \quad \quad
        \begin{tikzpicture}[scale=1]
            \node (1) at (0,0) {$1$};
            \node (2) at (1,1) {$2$};
            \node (3) at (1,2) {$3$};
            \node (4) at (1,3) {$n-1$};
            \node (5) at (1,4) {$n$};
            \node (6) at (2,0) {$n+1$};

            \node (v) at (-3,2) {$(1,S_2,\ldots,S_n)$};
            \node (N) at (5,2) {
            $\left\{
            \substack{
            \{1\},\ \{n+1\},\\
            [2]\cup\{n+1\},\\ [3]\cup\{n+1\},\\ \ldots,\\ [n-1]\cup\{n+1\}
            }\right\}$ };

            \draw (1)--(2)--(3);
            \draw[dotted] (3)--(4);
            \draw (4)--(5);
            \draw (2)--(6);
            \draw [<->] (v)--(0,2);
            \draw [<->] (2,2)--(N);
        \end{tikzpicture}
.    }
\end{equation*}
\end{remark}

With the aid of Lemma \ref{lem:facets-bijection} and Lemma \ref{lem:vertices-bijecion}, we can completely characterize the face structure of the $\bb$-parking function polytope for any $\bb$ as follows.

\begin{theorem} \label{thm:face_structure_Xn}
    Let $\B \deftobe \B_{\PF_n}$ if $b_{1} = 1$, and let $\B \deftobe
    \B_{\St_n}$ if $b_{2} \ge 1$.  
    Then the map
    \begin{align*}
        \bigtriangleup_{\B} 
        \longrightarrow
        \{\text{nonempty faces of $\Xn$}\},
        \text{\qquad where \qquad} N 
        \longmapsto
        \mathcal{F}_N
        \deftobe
        \bigcap_{B\in N} F_B
    \end{align*}
    is an isomorphism between the dual of the face lattice of the nested set complex $\bigtriangleup_{\B}$ and the face lattice of $\Xn$.
\end{theorem}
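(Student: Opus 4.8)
The plan is to show directly that the map $\phi\colon N\mapsto\mathcal{F}_N$ of the statement is an inclusion-reversing bijection from the faces of $\bigtriangleup_{\B}$ (the nested sets of $\B$, including $\emptyset$) onto the nonempty faces of $\Xn$; this is precisely an isomorphism from the dual of the face lattice of $\bigtriangleup_{\B}$ to the face lattice of $\Xn$. The inputs I would use are: that $\Xn$ is a simple polytope of dimension $n$ (\cref{prop: Xn is n-dimensional,subthm: Xn is simple}); the two bijections of \cref{lem:facets-bijection} (facets of $\Xn$ $\leftrightarrow$ $\B\setminus\setof{[n+1]}$ via the injective map $B\mapsto F_B$) and \cref{lem:vertices-bijecion} (maximal nested sets $\leftrightarrow$ vertices of $\Xn$ via $N^{*}\mapsto\mathcal{F}_{N^{*}}$); and the purity of $\bigtriangleup_{\B}$, i.e.\ that every maximal nested set has exactly $n$ elements (immediate for $\B_{\St_n}$ and $\B_{\PF_n}$ from the descriptions of their nested sets given above, and in general \cite[Theorem 7.4]{Pos}). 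I will use freely the standard facts that in a simple $d$-polytope each vertex lies on exactly $d$ facets, a codimension-$k$ face lies on exactly $k$ facets and is their intersection, and for each vertex $v$ the map $S\mapsto\bigcap_{F\in S}F$ from subsets of the $d$ facets through $v$ to faces through $v$ is an inclusion-reversing bijection with $\dim\bigcap_{F\in S}F=d-\card{S}$.

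The first --- and crucial --- step is to reconcile the two bijections. Given a maximal nested set $N^{*}$, put $v\deftobe\mathcal{F}_{N^{*}}$; since $\card{N^{*}}=n$ and $B\mapsto F_B$ is injective, $\setof{F_B\sst B\in N^{*}}$ is a set of $n$ distinct facets all containing $v$, so --- $v$ lying on exactly $n$ facets --- it is exactly the set of facets through $v$. Hence, for an arbitrary nested set $N$, choosing a maximal $N^{*}\supseteq N$ exhibits $\mathcal{F}_N$ as the intersection of the $\card{N}$ facets $\setof{F_B\sst B\in N}$ among the $n$ facets through $v=\mathcal{F}_{N^{*}}$; so $\mathcal{F}_N$ is a nonempty face with $\dim\mathcal{F}_N=n-\card{N}$, and being of codimension $\card{N}$ it lies on exactly those $\card{N}$ facets. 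This makes $\phi$ well defined and, for every nested set $N$, pins down the set of facets of $\Xn$ containing $\mathcal{F}_N$.

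With this in hand, surjectivity, injectivity, and order-reversal follow quickly. For surjectivity, a nonempty face $G$ of $\Xn$ contains a vertex $v=\mathcal{F}_{N^{*}}$, and every facet through $G$ is one of the facets through $v$, hence of the form $F_B$ with $B\in N^{*}$; so $M\deftobe\setof{B\in N^{*}\sst G\subseteq F_B}$ is a subset of the nested set $N^{*}$, hence itself a nested set, and $G=\bigcap\setof{F\sst G\subseteq F}=\mathcal{F}_M$. For injectivity, $\phi(M)=\phi(M')$ forces $\setof{F_B\sst B\in M}$ and $\setof{F_B\sst B\in M'}$ to both equal the facet set of the common face, so $M=M'$ by injectivity of $B\mapsto F_B$; and $M\subseteq M'\iff\mathcal{F}_{M'}\subseteq\mathcal{F}_M$ follows from the same facet identification. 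I expect the compatibility step of the middle paragraph to be the main obstacle: it is the one place where simplicity of $\Xn$ is indispensable (it is what forbids extra facets through $\mathcal{F}_{N^{*}}$, which is exactly what makes the dimension count $\dim\mathcal{F}_N=n-\card N$, and hence injectivity, go through). A shorter but less self-contained alternative would note that \cref{lem:facets-bijection,lem:vertices-bijecion} already match the vertex--facet incidences of $\Xn$ with those of the nestohedron $P_{\B}$ and invoke that a simple polytope is determined up to combinatorial type by those incidences, but I would prefer the argument above.
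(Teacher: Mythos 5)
Your proof is correct, and it departs from the paper's argument precisely at the step that occupies most of the paper's proof: surjectivity. There, the paper takes an arbitrary collection $N'\subseteq\B$ whose facets have nonempty intersection and verifies the nested-set axioms of \cref{prop:nestedPF} for $N'$ one by one ($n+1\in A$ and $\card{A}\ge 3$ for $A\in\mathcal F'$; $I'\subseteq\min(\mathcal F')$; $\card{I'}\le 2$ when $n+1\in I'$; $\mathcal F'$ a chain), in each case using the explicit coordinates of the vertices lying on the facets $L_i$ and $U_{[n]\setminus A}$ to show that any violation forces the intersection to be empty. You instead localize at a vertex: a nonempty face $G$ contains a vertex $v$, \cref{lem:vertices-bijecion} writes $v=\mathcal F_{N^*}$ for a maximal nested set $N^*$, simplicity plus $\card{N^*}=n$ identifies $\setof{F_B\sst B\in N^*}$ with the full set of facets through $v$, and the facets through $G$ then index a subset of $N^*$, which is automatically a nested set because $\bigtriangleup_{\B}$ is a simplicial complex. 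This is shorter, avoids all coordinate computations, and treats $b_1=1$ and $b_1\ge 2$ uniformly (the paper works out $b_1=1$ and asserts the other case is similar); the price is that it leans on the full bijectivity of \cref{lem:vertices-bijecion} and the purity of $\bigtriangleup_{\B}$ (though the latter in fact follows from that lemma together with simplicity), whereas the paper's case analysis gives a self-contained geometric explanation of why non-nested collections of facets fail to meet. Your remaining steps --- nonemptiness of $\mathcal F_N$ via a maximal extension of $N$, and injectivity and order-reversal via the unique facet representation of a face of a simple polytope --- coincide with the paper's.
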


\begin{proof}
    Consider $\bb=(b_1,\ldots,b_n)$ with $b_1=1$. 
    We first prove that the map from the nested sets in $\bigtriangleup_{\B_{\PF_n}}$ to the set of nonempty faces of $\Xn$ is well-defined.  
    From \cref{lem:facets-bijection}, the singleton nested set $\{B\}\in \bigtriangleup_{\B_{\PF_n}}$ corresponds to the facet $F_B$ of $\Xn$.  
    Since any intersection of facets is a face of the polytope, we know $\mathcal{F}_N$ is a face of $\Xn$ for any nested set $N\in \bigtriangleup_{\B_{\PF_n}}$.  
    For any nested set $N$, the face $\mathcal{F}_{N}$ is nonempty by \cref{lem:vertices-bijecion} because $\mathcal{F}_N$ contains those vertices that correspond to the maximal nested sets containing $N$.  
    Hence, the map is well-defined.

    Now we check the injectivity of the map.
    By \cref{subthm: Xn is simple}, $\Xn$ is a simple polytope. 
    In a simple polytope, every nonempty intersection of facets determines a unique face. 
    Hence, the injectivity of the map follows.

    We now show that the map is surjective.  
    Let $N'$ be a subset of $\B_{\PF_n}$ such that the face $\bigcap_{B\in N'}F_B$ is nonempty. 
    Say $N'=\{\{i\}\}_{i\in I'}\cup\{A\}_{A\in\mathcal{F}'}$ for some $I'\subset [n+1]$ and $\mathcal{F}'$ consisting of non-singleton subsets in $\B_{\PF_n}$, then
    \[
        \bigcap_{B\in N'}F_B=\left(\bigcap_{i\in I'}L_i\right)\cap\left(\bigcap_{A\in\mathcal{F}'}U_{[n]\setminus A}\right).
    \]
    We now show that $N'$ must be a nested set for $\B_{\PF_n}$ (Recall Proposition \ref{prop:nestedPF}).
 
\begin{itemize}[leftmargin=*]\setlength\itemsep{0.5em}
        \item By definition of $\B_{\PF_n}$, we have $n+1\in A$ and $|A|\ge 3$ for any $A\in\mathcal{F}'$ right away.
        
        \item First, for any $A\in\mathcal{F}'$ we must have $I'\subset A$; otherwise, there is $j\in I'$ but $j\notin A$ for some $A\in\mathcal{F}'$. 
        Then this face $\bigcap_{B\in N'}F_B$ is contained in the intersection of the facets $F_j \cap U_{[n]\setminus A}$, which is an empty set since $j\in [n]\setminus A$. 
        One can check the defining equations of the hyperplanes that there are no vertices of $\Xn$ in this intersection. 
        This contradicts the assumption that the face is nonempty.
        
        \item Second, we claim if $n+1\in I'$, then $|I'|\le 2$. 
        This is because the face $\bigcap_{B\in N'}F_B$ is contained in the facet $U_{[n]}$ whose supporting hyperplane is $\sum_{i=1}^n x_i=S_n+S_{n-1}+\ldots+S_1$. 
        Any vertex of $\Xn$ on $U_{[n]}$ is the permutation of $(S_1,S_2,\ldots,S_n)=(1,S_2,\ldots,S_n)$ which has exactly one coordinate being $1$. 
        Thus, $I'$ does not contain more than one element that is not $n+1$.
        
        \item In the final step, we show that $\mathcal{F}'$ must be a chain. 
        Let $A_1, A_2\in\mathcal{F}'$ with $|A_1|\le|A_2|$ then we must have $A_1\subseteq A_2$. 
        If not, there exists $j\in A_1$, but $j\notin A_2$. 
        Notice that any vertex $v=(v_1,\ldots,v_n)$ of $\Xn$ on the facet $U_{[n]\setminus A_2}$ has coordinates satisfying 
        \[
            \{v_i\}_{i\in[n]\setminus A_2}=\{S_n, S_{n-1},\ldots, S_{|A_2|+1}\};
        \] 
        while the vertex $v$ on $U_{[n]\setminus A_1}$ has coordinates satisfying 
        \[
            \{v_i\}_{i\in [n]\setminus A_1}=\{S_n,\ldots,S_{|A_2|+1},\ldots, S_{|A_1|+1}\}.
        \]
        Then there is no vertex of $\Xn$ that lies on $U_{[n]\setminus A_2}$ and $U_{[n]\setminus A_1}$ simultaneously since all the vertices have at most one coordinate equal to any one of $S_n, S_{n-1},\ldots, S_{|A_2|+1}$.
    \end{itemize}
Therefore, by \cref{prop:nestedPF} the subset $N'$ is a nested set for $\B_{\PF_n}$.

Now the map is a bijection. 
By the definition of the map, it is clear that any two nested sets $N_1\subseteq N_2$ if and only if $\mathcal{F}_{N_1}\supseteq\mathcal{F}_{N_2}$.
Therefore, the map is a poset isomorphism.
For the case that $b_1\ge 2$, the proof is similar.
\end{proof}

Recall that given a building set $\B\subset 2^{[n]}$, the face lattice of the nestohedron $P_{\B}=\sum_{I\in\B}y_I\bigtriangleup_I$ (where $y_I>0$) is completely determined by the building set $\B$ and is dual to the face lattice of the nested set complex $\bigtriangleup_{\B}$.
It is somewhat surprising that the face lattice of $\Xn$ is determined by the two building sets $\B_{PF_n}$ and $\B_{\St_n}$ because from Proposition \ref{prop: y-expression} we have seen that for most of $b$ the polytope $\Xnb$ is not a nestohedron (or a translation of one).
It will be interesting to understand how the value of $y_I$ affects the face structure of $\sum_{I}y_I\bigtriangleup_I$. 
More concretely, can we characterize the parameters $\{y_I\}$ such that $\Sigma y_I\bigtriangleup_I$ is combinatorially equivalent to $P_{\B}$ for a given building set $\B$?

From \cref{thm:face_structure_Xn}, we can explicitly tell which facets intersect at a given face and all the vertices on this face by observing the nested set corresponding to the face; we can see this exemplified next.  
\begin{example} 
For $\bb=(1,b_2,\ldots,b_6)$, consider the nested set $N=\{\{1\},\{3\}\}\cup (\{1,3,4,7\}\subsetneq\{1,2,3,4,7\})$.
Then the face $F_N$ is the intersection of the following facets of $\gpfp_6(\bb)$: 
\[
L_1:x_1=1, L_3:x_3=1, U_{\{1,3,4\}}:x_2+x_5+x_6=S_6+S_5+S_4, U_{\{1,2,3,4\}}:x_5+x_6=S_6+S_5.
\]
The face $F_N$ is $2$-dimensional and has $4$ vertices: \[(1,S_4,1,1,S_5,S_6), (1,S_4,1,S_3,S_5,S_6), (1,S_4,1,1,S_6,S_5), (1,S_4,1,S_3,S_6,S_5).\]
\end{example}

\begin{corollary}\label{cor:combinatorial_equivalence}
For \emph{any} $\bb=(b_1,b_2,\ldots,b_n)\in\mathbb{Z}_{>0}^n$, the $\bb$-parking-function polytope $\Xn$ is combinatorially equivalent to the classical parking-function polytope $\PF_n$ if $b_1=1$ or is combinatorially equivalent to the stellohedron $\St_n$ if $b_1\ge 2$.
\end{corollary}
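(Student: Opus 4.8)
The plan is to derive the statement essentially for free from \cref{thm:face_structure_Xn}, together with the standard fact that the combinatorial type of a nestohedron $P_{\B}$ is determined by the building set $\B$ alone, being dual to the nested set complex $\bigtriangleup_{\B}$ (see \cite[Theorem 7.4]{Pos}). The only work is to assemble these pieces and to handle the two cases $b_1 = 1$ and $b_1 \ge 2$ separately.

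First I would record the key reduction. By \cref{thm:face_structure_Xn}, the face lattice of $\Xn$ is isomorphic to the dual of the face lattice of $\bigtriangleup_{\B}$, where $\B = \B_{\PF_n}$ if $b_1 = 1$ and $\B = \B_{\St_n}$ if $b_1 \ge 2$. In particular the combinatorial type of $\Xn$ depends on $\bb$ only through the dichotomy $b_1 = 1$ versus $b_1 \ge 2$, so it suffices to exhibit, in each of the two cases, one distinguished polytope of the stated combinatorial type.

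For $b_1 \ge 2$: by definition $\St_n$ is the nestohedron $P_{\B_{\St_n}}$, and by \cite[Theorem 7.4]{Pos} its face lattice is dual to that of $\bigtriangleup_{\B_{\St_n}}$; comparing with the previous paragraph gives that $\Xn$ is combinatorially equivalent to $\St_n$. For $b_1 = 1$: I would observe that the classical parking-function polytope is itself an instance of our construction, namely $\PF_n = \mathfrak{X}_n(1, 1, \dotsc, 1)$, because a list of positive integers is a classical parking function exactly when it is a $(1,\dotsc,1)$-parking function. Since the vector $(1,\dotsc,1)$ has first entry $1$, \cref{thm:face_structure_Xn} applies to $\PF_n$ as well and identifies its face lattice with the dual of $\bigtriangleup_{\B_{\PF_n}}$. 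Hence $\Xn$ is combinatorially equivalent to $\PF_n$ for every $\bb$ with $b_1 = 1$.

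I do not expect a genuine obstacle: the substantive content was already established in \cref{thm:face_structure_Xn}. The one point deserving a remark in the writeup is that one cannot simply invoke \cref{prop:TwoBuildings} and the Minkowski-sum description, since its positivity hypotheses on the coefficients $y_I$ fail for many $\bb$ — already for $\bb = (1,\dotsc,1)$, where \cref{prop: y-expression} gives $y_I = 0$ whenever $n+1 \in I$ and $\card{I} \ge 4$. The argument must therefore route through the face-lattice identification of \cref{thm:face_structure_Xn}, which holds for all $\bb$ regardless of the signs of the $y_I$.
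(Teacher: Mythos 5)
Your argument is correct and is exactly the route the paper intends: the corollary is stated as an immediate consequence of \cref{thm:face_structure_Xn}, combined with the observation that $\PF_n = \gpfp_n(1,\dotsc,1)$ and that $\St_n$ is by definition the nestohedron for $\B_{\St_n}$, whose face lattice is dual to $\bigtriangleup_{\B_{\St_n}}$ by \cite[Theorem 7.4]{Pos}. Your closing remark correctly identifies why one must route through \cref{thm:face_structure_Xn} rather than \cref{prop:TwoBuildings}, which is precisely the point of the paper's more elaborate argument.
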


Next, we provide a visualization of the two $3$-dimensional cases of \cref{cor:combinatorial_equivalence}.

\begin{example}\label{ex:comb_type}
The combinatorial type of $\Xn$ is as in \cref{fig:nestohedron} when $n=3$ and $b_1=1$ and is as in \cref{figure:St3} when $n=3$ and $b_1\ge 2$. 
The coordinate of each vertex can be obtained by using the bijection described in the proof of \cref{lem:vertices-bijecion}.   
\end{example}

\begin{figure}[ht]
\begin{center}
\scalebox{.5}{
\begin{tikzpicture}[scale=0.85]
    \tiny
    \node (1) at (3,0) {$\{3,34,234\}$};
    
    \node (3) at (0,1) {$\{2,3,234\}$};
    \node (5) at (0,4) {$\{2,24,234\}$};
    \node (C1) at (3,1.5) {$\{4,34,234\}$};
    \node (C2) at (2,3.5) {$\{4,24,234\}$};

    \node (2) at (7,0) {$\{3,34,134\}$};
    \node (C6) at (7,1.5) {$\{4,34,134\}$};
    \node (C5) at (8,3.5) {$\{4,14,134\}$};
    
    \node (6) at (10,4) {$\{1,14,134\}$};
    \node (4) at (10,1) {$\{1,3,134\}$};
    
    \node (7) at (1.5,6) {$\{2,24,124\}$};
    \node (8) at (8.5,6) {$\{1,14,124\}$};
    \node (C3) at (3.5,5.5) {$\{4,24,124\}$};
    \node (C4) at (6.5,5.5) {$\{4,14,124\}$};
    \node (9) at (5,7) {$\{1,2,124\}$};
    \node (10) at (5,3) {$\{1,2,3\}$};

    \draw (1)--(2)--(4)--(6)--(8)--(9)--(7)--(5)--(3)--(1);
    \draw (7)--(C3);
    \draw (C2)--(5);
    \draw (C4)--(8);
    \draw (1)--(C1)--(C2)--(C3)--(C4)--(C5);
    \draw (C1)--(C6);
    \draw (2)--(C6)--(C5)--(6);
    \draw [dashed] (3)--(10);
    \draw [dashed] (4)--(10);
    \draw [dashed] (9)--(10);
\end{tikzpicture}}   
\caption{The combinatorial type of $\Xn$ when $n=3$ and $b_1\ge 2$.}
\label{figure:St3}
\end{center}
\end{figure}


Up to this point we have observed $\Xn$ as a generalized permutahedron and have characterized its combinatorial types through the use of building sets.
In the next section, we exhibit $\Xn$ as a special polymatroid, which allows us to obtain further results.
 
\section{A polymatroid perspective}\label{sec:polymatroid_perspective}

Here we study the $\bb$-parking-function polytope through a polymatroid lens.
Polymatroids are a class of polyhedra that were introduced by Edmonds \cite{edmonds1970submodular} in 1970 as polyhedral generalizations of matroids.
We note that generalized permutahedra are equivalent (up to translation) to polymatroids. 
By exhibiting $\Xn$ as a polymatroid, we are able to draw conclusions about properties such as its combinatorial and circuit diameters. 
For background on diameters, see \cite{ykk-84,ks-10} and \cite{bfh-14, bdf-16, bv-17,env-22}.
We begin by recalling a few helpful terms and definitions from matroid theory \cite{oxley-06}. 

\begin{definition}
Let $f: 2^{[n]} \rightarrow \mathbb{R}$ be a set function. 
The function $f$ is said to be \emph{submodular} if for every $X, Y \subseteq [n]$ with $X \subseteq Y$ and every $z \in [n] \setminus Y$, the following inequality holds: $f(X \cup \{z\}) - f(X) \geq f(Y \cup \{z\}) - f(Y).$
The \defing{polymatroid} associated with $f$ is the polytope
\begin{equation*}
    P_f \deftobe \left\{x \in \mathbb{R}^{n} : x \geq 0, \ \sum_{i \in S} x_i \leq f(S) \text{ for all } S \subseteq [n] \right\}. 
\end{equation*}
\end{definition}
A set function $f \maps 2^{[n]} \to \R$ is said to be \defing{nondecreasing} if, for every $X, Y \subseteq [n]$ with $X \subseteq Y$, we have $f(X) \leq f(Y)$.
Henceforth, let $\fx \maps 2^{[n]} \to \Z_{+}$ be the set function defined by setting $\fx(\emptyset)\deftobe 0$ and 
\begin{equation*}
    \fx(I)
    \deftobe
    -\card{I} +
    \sum_{j=1}^n
    \min
    \setof{\card{I}, j} \, b_{n-j+1}, \qquad \text{ for } \emptyset \neq I \subseteq [n].
\end{equation*}

\begin{proposition}\label{prop:submodularity}
   The function $\fx$ is a nondecreasing submodular function.
\end{proposition}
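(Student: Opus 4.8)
The plan is to exploit the fact that $\fx(I)$ depends only on the cardinality $\card{I}$. Define $g(k) \deftobe -k + \sum_{j=1}^{n} \min \setof{k, j} \, b_{n-j+1}$ for $0 \le k \le n$, so that $\fx(I) = g(\card{I})$ for every $I \subseteq [n]$; this is consistent with $\fx(\emptyset) = g(0) = 0$. Both assertions to be proved --- that $\fx$ is nondecreasing and that it is submodular in the diminishing-returns sense stated --- then reduce to statements about the first differences of $g$. Indeed, given $X \subseteq Y \subseteq [n]$ and $z \in [n] \setminus Y$, set $a \deftobe \card{X}$ and $b \deftobe \card{Y}$, so $0 \le a \le b \le n-1$ and $z \notin X$; then $\fx(X) \le \fx(Y)$ is the inequality $g(a) \le g(b)$, while $\fx(X \cup \setof{z}) - \fx(X) \ge \fx(Y \cup \setof{z}) - \fx(Y)$ is the inequality $g(a+1) - g(a) \ge g(b+1) - g(b)$.

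The crux is a closed formula for $\Delta g(k) \deftobe g(k+1) - g(k)$, valid for $0 \le k \le n-1$. Since $\min \setof{k+1, j} - \min \setof{k, j}$ equals $1$ when $j \ge k+1$ and $0$ otherwise,
\begin{equation*}
    \Delta g(k)
    \;=\; -1 + \sum_{j = k+1}^{n} b_{n-j+1}
    \;=\; -1 + \sum_{i=1}^{n-k} b_{i}
    \;=\; S_{n-k} - 1,
\end{equation*}
where the middle equality uses the substitution $i = n-j+1$; in particular $\Delta g(0) = S_{n} - 1$. Everything now follows from this. First, $\fx$ is nondecreasing: for $0 \le k \le n-1$ we have $\Delta g(k) = S_{n-k} - 1 \ge S_{1} - 1 = b_{1} - 1 \ge 0$ since each $b_i$ is a positive integer, so $g$ is nondecreasing on $\setof{0, 1, \dotsc, n}$ and hence $g(a) \le g(b)$ whenever $a \le b$. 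Second, $\fx$ is submodular: the map $k \mapsto S_{n-k}$ is nonincreasing because $S_1 \le S_2 \le \dotsb \le S_n$, so $\Delta g$ is nonincreasing on $\setof{0, 1, \dotsc, n-1}$, which gives $\Delta g(a) \ge \Delta g(b)$ for all $0 \le a \le b \le n-1$ --- exactly the required submodularity inequality.

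I do not anticipate a genuine obstacle here; the only content is recognizing that $\fx$ is a function of cardinality alone and performing the reindexing that yields $\Delta g(k) = S_{n-k}-1$. The minor points to confirm are the boundary value $k=0$ and that the ranges of $a$, $b$, $z$ occurring in the definitions of ``nondecreasing'' and ``submodular'' are all subsumed by $0 \le a \le b \le n-1$; both are immediate once the formula for $\Delta g$ is in hand. (As a byproduct one also sees that $\fx$ is $\Z_{+}$-valued, since $g$ is integer-valued, vanishes at $0$, and is nondecreasing.)
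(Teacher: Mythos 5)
Your proposal is correct and follows essentially the same route as the paper: both reduce everything to the fact that $\fx(I)$ depends only on $\card{I}$ and then compute the first difference $\fx(i+1)-\fx(i) = -1 + \sum_{k=i+1}^{n} b_{n-k+1}$ ($= S_{n-i}-1$ in your notation), observing that it is nonincreasing in $i$. The only difference is that you explicitly verify the nondecreasing property from $S_{n-k}-1 \ge b_1 - 1 \ge 0$, whereas the paper simply asserts it ``by construction''; your version is, if anything, slightly more complete.
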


\begin{proof}
By construction, $\fx$ is integer and nondecreasing. 
It remains to show that $\fx$ is submodular. 
Observe that $\fx(I)=-|I| + \sum_{j=1}^n \min \setof{\card{I}, j} \, b_{n-j+1}$ only depends on the cardinality $|I|$ of $I$, and not on the specific elements in $I$.  
It suffices to prove that $I \subset J \subsetneq [n] \Rightarrow  \fx(|I|+1) - \fx(|I|) \geq \fx(|J|+1) - \fx(|J|)$, or equivalently 
\[i\leq j < n \quad \Rightarrow \quad \fx(i+1) - \fx(i) \geq \fx(j+1) - \fx(j).\]
The difference $\fx(i+1) - \fx(i)$ is 
 \begin{align*}
        \fx(i+1) - \fx(i) = -(i+1)-(-i) + \sum_{k=1}^n
        (\min
        \setof{i+1, k} -\min
        \setof{i, k}) \, b_{n-k+1}= 
        -1 + \sum_{k=i+1}^n
         b_{n-k+1}.
\end{align*}
As $\bb=(b_1,\dots,b_n)\in \Z_{>0}^n$, \[\sum_{k=i+1}^n
         b_{n-k+1}\geq \sum_{k=j+1}^n
         b_{n-k+1} \quad\quad\qquad\text{for}\quad i \leq j.\] 
Thus, $\fx$ is submodular. 
\end{proof}

We translate the inequality description of $\Xn$ (\cref{subthm: facet-defining inequalities}) to a description of the form~$P_f$.
Setting $y_i \deftobe x_i - 1$, one obtains 
\begin{equation*}
    x_i \geq 1 
    \quad\Leftrightarrow\quad
    x_i -1 \geq 0
    \quad\Leftrightarrow\quad
    y_i\geq 0,
\end{equation*}
and the remaining constraints in \cref{subthm: facet-defining inequalities} take the form
\begin{equation*}
    \sum_{i \in I} x_i
    \le
    \sum_{j=1}^n \min \setof{\card{I}, j} \, b_{n-j+1}
    \quad\Leftrightarrow\quad 
    \sum_{i \in I} y_i 
    \leq
    -\card{I}
    +
    \sum_{j=1}^n \min \setof{\card{I}, j} \, b_{n-j+1}.
\end{equation*}
The resulting polymatroid is now described as
\begin{equation*}
    \Xnp
    \deftobe
    \setof{%
        y \in \R^{n}
        \sst
        \text{%
            $y \geq 0$,
            $\sum_{i \in I} y_i \leq \fx(I)$
            for all 
            $I \subseteq [n]$
        }%
    },%
\end{equation*}
which proves the following.

\begin{theorem}\label{thm:polymatroid}
The polytope $\Xnp$ is a polymatroid. 
In fact, $\Xnp$ is a special polymatroid for which upper bounds on the sum of entries in any set~$I$ depend only on the cardinality $|I|$.
\end{theorem}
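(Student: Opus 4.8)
The plan is to observe that \cref{thm:polymatroid} follows immediately from \cref{prop:submodularity} together with the change of variables carried out just above the statement, so the proof is essentially bookkeeping. First I would record that, under the substitution $y_i \deftobe x_i - 1$, the constraint $x_i \ge 1$ becomes $y_i \ge 0$ and, for each nonempty $I \subseteq [n]$, the \eqref{x-parking}~inequality $\sum_{i \in I} x_i \le \sum_{j=1}^n \min\setof{\card{I}, j}\, b_{n-j+1}$ becomes exactly $\sum_{i \in I} y_i \le \fx(I)$; hence the affine image of $\Xn$ is precisely the set $\Xnp$ displayed above.

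The one point requiring a moment's care is the case $b_{1} = 1$, in which the \emph{minimal} facet list of \cref{subthm: facet-defining inequalities} omits the subsets $I$ with $\card{I} = n-1$. As noted in the remarks following \cref{thm:inequality_description}, those omitted \eqref{x-parking}~inequalities nonetheless hold on all of $\Xn$ (they are merely redundant), so reinstating them changes neither $\Xn$ nor its image; together with the trivial inequality $0 \le \fx(\emptyset) = 0$, this shows that $\Xnp$ is exactly the polytope $P_{\fx}$ associated with the set function $\fx$. Since \cref{prop:submodularity} established that $\fx$ is submodular (and nondecreasing), $P_{\fx}$ is a polymatroid by definition, and therefore so is $\Xnp$.

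For the final ``special'' assertion, I would invoke the observation made inside the proof of \cref{prop:submodularity}, namely that $\fx(I) = -\card{I} + \sum_{j=1}^n \min\setof{\card{I}, j}\, b_{n-j+1}$ depends only on the cardinality $\card{I}$ and not on which elements of $[n]$ belong to $I$. Consequently the right-hand side of each defining inequality $\sum_{i \in I} y_i \le \fx(I)$ is a function of $\card{I}$ alone, which is precisely the additional structure claimed. I do not anticipate any genuine obstacle here: all the substantive content is already contained in \cref{prop:submodularity} and in the change-of-variables computation preceding the theorem, and the only slightly delicate step is the $b_{1} = 1$ redundancy point just discussed.
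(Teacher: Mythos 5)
Your proposal is correct and follows essentially the same route as the paper: the paper likewise derives \cref{thm:polymatroid} directly from the change of variables $y_i = x_i - 1$ applied to \cref{subthm: facet-defining inequalities} together with \cref{prop:submodularity}, and the ``special'' claim from the observation that $\fx(I)$ depends only on $\card{I}$. Your explicit handling of the redundant $\card{I}=n-1$ inequalities when $b_1=1$ is a point the paper leaves implicit (it is justified by the remark following \cref{thm:inequality_description}), but it does not change the argument.
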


The polytopes $\Xn$ and $\Xnp$ are combinatorially equivalent. 
Thus, we can leverage the theory of polymatroids to obtain results about $\Xn$.
In particular, we note that known results about polymatroids allow for alternate directions to recover \cref{thm:inequality_description}.
Topkis \cite{topkis1984adjacency} studied extreme point adjacency in polymatroids \cite[Corollary 5.4]{topkis1984adjacency}, and his work can be used to deduce \cref{subthm: Xn is simple}.
Shapley's observation on the structure of the vertices of a polymatroid \cite{shapley1971cores} can be used to obtain \cref{subthm: vertices of Xn}. 
Edmonds \cite{edmonds1970submodular, schrijver2003combinatorial} showed that we can optimize linear functions over polymatroids in strongly polynomial by certifying \emph{total dual integrality} through the greedy algorithm.
In turn, this yields another direction for proving the inequality description in \cref{subthm:
facet-defining inequalities}.

Next, using \cref{thm:polymatroid} and \cref{prop:edges}, we can carry over known upper bounds on the combinatorial diameters of polymatroids \cite{t-92} to $\Xn$, of the form $\min\{2n,\frac{1}{2}n(n-1)+1\}$. 
Recall that the \emph{combinatorial diameter} of a polytope is the diameter of its edge graph, that is, the maximum length of a shortest edge-path between two vertices. 
We now provide a direct proof of these upper bounds, which allows us to improve them for the case $b_1=1$ and see tightness in all cases.

\begin{theorem}\label{thm:boundtight} 
If $b_1>1$, the combinatorial diameter of $\Xn$ is exactly $\min\{2n,\frac{1}{2}n(n-1)+1\}$. 
If $b_1=1$, the combinatorial diameter is exactly $\min\{2(n-1),\frac{1}{2}n(n-1)\}$. 
\end{theorem}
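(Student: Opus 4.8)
The plan is to first replace $\Xn$ by a combinatorial model of its edge graph. By \cref{subthm: vertices of Xn} every vertex has the form $\pi(\y_k)$, and using the $\SSS$-action one may record such a vertex by the injective word $w=(w_1,\dots,w_{n-k})$ over $[n]$, where $w_i$ is the coordinate carrying the value $S_{k+i}$ and the remaining $k$ coordinates carry $1$. Under this encoding \cref{prop:edges} translates the three families of edges into: swapping two consecutive letters of $w$ (swapping $S_{j-1}$ with $S_j$); prepending a new letter to $w$ (raising one coordinate from $1$ to the new minimum value $S_k$); and deleting the first letter of $w$ (lowering $S_{k+1}$ to $1$). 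The only effect of $b_1=1$ is that $\y_0=\y_1$, so words then have length at most $n-1$ and the length-$(n-1)$ words instead admit a ``replace the first letter by a new one'' move. I would run the whole argument in this model, writing $w_\bv$ for the word of a vertex $\bv$ and calling $\bv$ \emph{full} when $|w_\bv|=n$ (which forces $b_1\ge 2$).

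\textbf{Upper bounds.} A vertex of type $k$ reaches $\y_n=(1,\dots,1)$ (the empty word) in exactly $n-k$ deletions, so $d(\bv,\bw)\le(n-k)+(n-\ell)\le 2n$, and $\le 2(n-1)$ when $b_1=1$ since then $k,\ell\ge 1$. For the bound $\binom n2+1$ (respectively $\binom n2$) I would argue by cases: if neither vertex is full, then $d(\bv,\bw)\le|w_\bv|+|w_\bw|\le 2(n-1)\le\binom n2+1$; if both are full, bubble sort gives $d(\bv,\bw)\le\binom n2$; and if exactly one of them, say $\bv$, is full while $|w_\bw|=p$, then one can segregate into the prefix of $w_\bv$ the $n-p$ letters absent from $w_\bw$ using at most $(n-p)p$ swaps, delete those $n-p$ letters, and bubble sort the resulting length-$p$ word to $w_\bw$ using at most $\binom p2$ swaps, for a total of at most $(n-p)p+(n-p)+\binom p2$. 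This quantity is a downward parabola in $p$ whose integer maximum over $0\le p\le n$ is attained at $p=n-1$ (and $p=n-2$) and equals $\binom n2+1$; hence $d(\bv,\bw)\le\binom n2+1$ in all cases. When $b_1=1$ no vertex is full, so the $2(n-1)$ bound already gives $\binom n2$ for $n\ge 4$, and the cases $n\le 3$ are immediate.

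\textbf{Lower bounds and tightness.} The routing estimates above also pin down the shape of an extremal pair. When $\min\{2n,\binom n2+1\}=\binom n2+1$ (that is, $n\le 4$) one takes $\bv$ a reverse permutation and $\bw$ a short word on which the ``segregate--delete--bubble'' route is forced to be essentially optimal; for $b_1\ge 2$ one may use the vertices $(S_3,S_2,S_1)$ and $(1,1,S_3)$ when $n=3$ and $(S_1,S_2,S_3,S_4)$ and $(S_4,S_3,S_2,1)$ when $n=4$. When $\min=2n$ (that is, $n\ge 5$ and $b_1\ge 2$) the two vertices must both be full permutations with no common suffix, since any pair involving a non-full word lies within distance $2n-1$; here the vertices $(S_1,\dots,S_n)$ and $(S_n,\dots,S_1)$ work, the delete-then-rebuild route giving the matching upper bound $2n$. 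To bound below the length of an arbitrary edge path between such a pair I would track the set $D$ of coordinates that are lowered to $1$ at some point along the path: each coordinate in $D$ forces at least one deletion and one prepend, hence at least $2|D|$ non-swap moves; the coordinates never lowered force at least $\binom{n-|D|}{2}$ swap moves (for the reversed pair every relative order among the large values must flip); and the coordinates that are lowered and later raised again incur still more swaps to be carried from the front of the word back to their final positions. Minimizing the resulting bound over $|D|$ and incorporating this re-insertion cost yields the required lower bound; the $b_1=1$ case follows by the same analysis, with the two extremal vertices now taken to be permutations of $\y_1$.

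\textbf{Expected main obstacle.} The delicate step is the $n\ge 5$ lower bound. The naive invariant --- the number of unit coordinates plus the number of ``wrong'' relative orders among the coordinates that are never lowered --- only yields $2n-3$, so one genuinely has to account for the cost of re-positioning a coordinate that has been lowered to $1$ and then raised: once raised it sits at the front of the word and must be transported, by swaps, to a position deep inside. Showing that no interleaving of swaps, lowerings, and raisings can amortize this cost below $2n$, together with checking the handful of small $n$ for which $\binom n2+1$ is the binding bound, is where the real work lies.
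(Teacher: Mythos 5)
Your word model and your upper-bound arguments are sound, and in one respect they go beyond the paper: the paper only proves the $2n$ (resp.\ $2(n-1)$) upper bound directly and disposes of $n\le 4$ by exhaustive verification, whereas your three-case analysis (neither vertex full; both full; exactly one full, with the maximization of $(n-p)p+(n-p)+\binom{p}{2}$ at $p=n-1$) gives a clean, uniform proof that the diameter is at most $\binom{n}{2}+1$ for every $n$. I checked the arithmetic and it is correct.

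The genuine gap is the lower bound for $n\ge 5$, and you have named it yourself: your invariant $2\card{D}+\binom{n-\card{D}}{2}$ bottoms out at $2n-3$ (at $\card{D}=n-2$), and the ``re-insertion cost'' that is supposed to make up the deficit of $3$ is never defined, let alone bounded. This is not a routine omission --- it is the entire content of the tightness claim, and the amortization question you raise (can a letter that is deleted and later re-prepended be parked deep in the word for free, simply by having other letters prepended after it?) is exactly what makes the bound delicate. The paper closes this gap with a different bookkeeping device: it stratifies walks by the largest value $S_j$ that is ever reduced, observes that reducing $S_j$ forces at least $j$ reductions and $j$ increases (since a value can only be reduced when it is the smallest non-unit value present), and lower-bounds the swaps needed to transport the never-reduced values to their targets by $j(n-j)$ rather than by the inversion count $\binom{n-j}{2}$ among them --- the product bound also charges swaps between a never-reduced value and a reduced one, and $2j+j(n-j)\ge 2n$ for $2\le j\le n-2$, with $j\in\{1,n-1\}$ handled separately. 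You would need to either import an estimate of this strength or actually carry out your re-insertion accounting; as written, the proposal proves the upper bounds but not the theorem. (Separately, your extremal pairs for $n\le 4$ are asserted rather than verified; the paper is explicit that these small cases are settled by exhaustive check, which is fine, but ``forced to be essentially optimal'' is not an argument.)
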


\begin{proof}
Note that $2n \leq \frac{1}{2}n(n-1)+1$ and $2(n-1) < \frac{1}{2}n(n-1)$ if and only if $n\geq 5$. For $n\leq 4$, tightness of the bounds can be verified exhaustively. For example, for $n=3$, it is clear from Figure \ref{b123} that the combinatorial diameter of $\gpfp_3(1,2,3)$ and $ \gpfp_3(2,3,4)$ is $3$ and $4$, respectively. 

We provide an argument for the bounds $2n$ and $2(n-1)$. Let $\mathbf{w}_k = \pi(\y_k)$ be a vertex of $\Xn$ where $k < n$.
By \cref{prop:edges}, $\mathbf{w}_k$ is adjacent to $\mathbf{w}_{k+1} = \pi(\y_{k+1})$, which corresponds to reducing the $(k+1)$\textsuperscript{th} entry of $\y_k$ to 1.
Consider the path $P_k = (\mathbf{w}_k, \mathbf{w}_{k+1}, \dots, \mathbf{w}_{n-1}, \mathbf{1})$ on $\Xn$, where $P_k$ ends at $\mathbf{v}_n=\mathbf{w}_n=\mathbf{1}$. 
Because $P_k$ begins at $\mathbf{w}_k$, the length of $P_k$ is at most $n$ when $b_1>1$. 
Concatenating paths when necessary, we have a path of length at most $2n$ between any two vertices of $\Xn$ with $b_1 > 1$. 
A path of this length is realized when traveling between the vertices $\mathbf{w}_0 = (b_1, S_2, \dots, S_n)$ and $\pi'(\mathbf{w}_0) = (S_n, S_{n-1}, \dots, S_2, b_1)$. 
When $b_1 = 1$, $\mathbf{w}_0 = \mathbf{w}_1$, so $P_0$ has length at most $n-1$. 
A path of length $2(n-1)$ is realized when travelling between vertices $\mathbf{w}_1 = (1, S_2, \dots, S_n)$ and $\pi'(\mathbf{w}_1) = (S_n, S_{n-1}, \dots, S_2, 1)$.

It remains to show tightness. We exhibit that the paths constructed above are shortest paths between the given pairs of vertices. 
The bound $2n$ for a path between $\mathbf{w}_0$ and $\pi'(\mathbf{w}_0)$ is achieved through a combination of $n$ reductions and $n$ increases of entries $S_i$, and no swaps. A path that exclusively uses swaps requires at least $\frac{1}{2}n(n-1)$ steps, and so is not better. 
Let $S_j$ be the largest value that is reduced during an edge walk. 
It can be seen that any walk where $j=1$ has to perform the same number of swaps as one exclusively using swaps; any walk where $j=n-1$ has to perform at least two further steps (in addition to $n-1$ reductions and $n-1$ increases). Let now $2\leq j \leq n-2$. Then the number of swaps required to arrive at a vertex where the $j$ largest values $S_n,\dots,S_{n-j+1}$ are in the target positions is bounded below by $j\cdot (n-j)$. (For every $1\leq i_1 < i_2 \leq n$, it takes $i_2-i_1$ swaps to move $S_{i_2}$ to the position in which $S_{i_1}$ lies; in a best case, the target positions for $S_n,\dots,S_{n-j+1}$ start these swaps with values $S_{j},\dots,S_{1}$.) 
Combined with the at least $j$ reductions and $j$ increases, the path has length at least $2j+j(n-j) \geq 2j+2(n-2) \geq 2n$.

An analogous argument with $n-1$ in place of $n$ shows tightness of $2(n-1)$ for $b_1=1$. 
\end{proof}

Circuits and circuit diameters generalize the concepts of edges and combinatorial diameters. 
We refer the reader to \cite{bdf-16,bfh-14,dhl-15,env-22,r-69} for background. 
The set of \defing{elementary vectors} or \defing{circuits} $\mathcal{C}(P)$ of a polyhedron $P$ correspond to the inclusion-minimal dependence relations in the constraint matrices of a (minimal) representation. 
Note that the elementary vectors or circuits of a polymatroid are {\em not} the circuits of the underlying matroid. 
The elementary vectors include all edge directions that appear for varying right-hand sides of a representation, and so $\mathcal{C}(\Xn)$ contains a superset of the edge directions found in \cref{prop:edges}.

\begin{lemma}\label{lem:circuits}
The elementary vectors of $\Xn$ include $\pm \ee_i$ and $\ee_i - \ee_j$ for all $i,j \leq n$ with $i \neq j$.
\end{lemma}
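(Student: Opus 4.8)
The plan is to deduce the lemma directly from \cref{prop:edges} together with the observation recorded just above it, namely that $\mathcal{C}(\Xn)$ contains every direction of an edge of $\Xn$. It therefore suffices to exhibit, for each of the vectors $\pm\ee_i$ ($1 \le i \le n$) and $\ee_i - \ee_j$ ($i \ne j$), an edge of $\Xn$ whose difference vector is a strictly positive multiple of it; since each of these vectors is primitive, it then lies in $\mathcal{C}(\Xn)$. I would split into the two cases $b_1 \ge 2$ and $b_1 = 1$ that already organize \cref{prop:edges}, and use the fact that an edge $\{\bv,\bw\}$ supplies both difference vectors $\bw-\bv$ and $\bv-\bw$, so that producing a scaled copy of $+\ee_i$ simultaneously produces one of $-\ee_i$.

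For the vectors $\pm\ee_i$: if $b_1 \ge 2$, fix $\pi \in \SSS$ with $\pi(1) = i$ and apply \cref{subprop: edges b1 ge 2 or b1 is 1 and k ge 2}(1) with $k = 1$ and $j = 1$; this yields the edge $\{\pi(\y_1), \pi(\y_0)\}$ of $\Xn$ with difference $(S_1 - 1)\ee_i = (b_1 - 1)\ee_i$, a positive multiple of $\ee_i$ since $b_1 \ge 2$. (When $n = 1$ this already gives the full claim, as $b_1 \ge 2$ there.) If instead $b_1 = 1$ — so $n \ge 2$ by our standing assumption and $\y_0 = \y_1$ — fix $\pi$ with $\pi(2) = i$ and apply \cref{subprop: edges b1 is 1 and k is 1}(2) at the vertex $\pi(\y_0)$; this yields the edge $\{\pi(\y_0), \pi(\y_2)\}$ with difference $-b_2\ee_i$, a nonzero multiple of $\ee_i$ since $b_2 \ge 1$. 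In either case both $\ee_i$ and $-\ee_i$ arise (up to positive scaling) as edge directions of $\Xn$.

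For the vectors $\ee_i - \ee_j$ with $i \ne j$ (which arise only when $n \ge 2$): if $b_1 \ge 2$, fix $\pi$ with $\pi(1) = i$ and $\pi(2) = j$ and apply \cref{subprop: edges b1 ge 2 or b1 is 1 and k ge 2}(2) with $k = 0$ at index $2$, producing the edge $\{\pi(\y_0), (\pi \circ (1,2))(\y_0)\}$ with difference $b_2(\ee_i - \ee_j)$; if $b_1 = 1$, the same $\pi$ together with \cref{subprop: edges b1 is 1 and k is 1}(1) at index $2$ produces the edge $\{\pi(\y_0), (\pi \circ (1,2))(\y_1)\}$ with the same difference $b_2(\ee_i - \ee_j)$. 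As $b_2 \ge 1$, this is a positive multiple of $\ee_i - \ee_j$, which finishes the verification.

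\textbf{Expected difficulty.} I do not anticipate a genuine obstacle here; this is a bookkeeping exercise layered on \cref{prop:edges}. The only points needing care are (i) keeping the $b_1 \ge 2$ and $b_1 = 1$ branches of \cref{prop:edges} straight; (ii) checking that the scalars $b_1 - 1$ and $b_2$ are strictly positive, so the primitive representative of the edge direction is exactly the claimed vector; and (iii) using that elementary vectors are taken up to scaling, so a positive multiple of $\ee_i$ (respectively $\ee_i - \ee_j$) represents $\ee_i$ (respectively $\ee_i - \ee_j$) in $\mathcal{C}(\Xn)$. One could instead verify these vectors against the polymatroid description $\Xnp$ by checking support-minimality of $\mathbf{1}_S\cdot g$ over all $S \subseteq [n]$, but the route through \cref{prop:edges} is considerably shorter.
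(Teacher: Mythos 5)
Your proof is correct, and it takes a more self-contained route than the paper's. The paper's argument uses the fact that $\mathcal{C}(\Xn)$ depends only on the constraint matrix of a minimal representation, so it suffices to find \emph{some} right-hand side for which each vector is an edge direction; it then points to a single convenient instance, namely an $\Xn$ that is (a translate of) the independence polytope of a uniform matroid of rank $\ge 2$, where all of $\pm\ee_i$ and $\ee_i-\ee_j$ are known to occur as edges. You instead exhibit each vector, up to a positive scalar, as an edge direction of the \emph{given} $\Xn$ itself by instantiating \cref{prop:edges}: $(b_1-1)\ee_{\pi(1)}$ from \cref{subprop: edges b1 ge 2 or b1 is 1 and k ge 2}(1) with $k=j=1$ when $b_1\ge 2$, $-b_2\ee_{\pi(2)}$ from \cref{subprop: edges b1 is 1 and k is 1}(2) when $b_1=1$, and $b_2(\ee_{\pi(1)}-\ee_{\pi(2)})$ from the swap edges in either branch; your scalar checks ($b_1-1>0$ in the first case, $b_2\ge 1$ elsewhere) and the observation that an edge contributes both orientations are exactly what is needed. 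What your version buys is that it proves the slightly stronger statement that these vectors are actual edge directions of every nondegenerate $\Xn$, and it sidesteps a small subtlety in the paper's argument: the minimal representation (hence the constraint matrix) is not literally identical across all $\bb$, since the $\card{I}=n-1$ inequalities are dropped when $b_1=1$, so "they all appear in the same polymatroid" requires a word of care that your case split makes unnecessary. What the paper's version buys is brevity and a pointer to the established polymatroid literature. Both arguments rest on the same underlying principle, stated just before the lemma, that $\mathcal{C}(\Xn)$ contains all edge directions.
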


\begin{proof}
It suffices to exhibit that for any vector of type $\pm \ee_i$ or $\ee_i - \ee_j$ for all $i,j \leq n$ with $i \neq j$, there exists an $\Xn$ in which that vector appears as an edge.
In fact, they all appear in the same polymatroid: a matroid polyhedron $\Xn$ for a uniform matroid with rank $\geq 2$; \emph{c.f.} \cite{t-92}. 
\end{proof}

The associated \defing{circuit walks} begin and end at vertices, and take maximal steps along elementary vectors instead of edges, in particular through the interior of a polyhedron.
The study of circuit diameters is motivated by the search for lower bounds on the combinatorial diameter and the efficiency of circuit augmentation schemes \cite{dhl-15}.
The \defing{circuit diameter} of a polytope is the maximum length of a shortest circuit walk between any two vertices. 
Of particular interest are polyhedral families where there is a significant gap between the combinatorial and circuit diameter \cite{bfh-16,kps-17}.
We will prove that $\bb$-parking-function polytopes is one of these families.

Unlike the more technical conditions for the adjacency of vertices found in \cref{prop:edges}, steps along the elementary vectors of \cref{lem:circuits} allow for more general changes to a $\bb$-parking function than classic matroid operations: {\em any} two entries can be swapped (a step in direction $\pm (\ee_i-\ee_j)$), even for non-bases and even including an entry that is $1$; this contrasts with the basis-exchange property of matroids where specific pairs of elements of two bases can be exchanged. 
Further, {\em any} entry can be decreased or increased (a step in direction $\pm \ee_i$), respectively. 
The steps along the elementary vectors have to be of maximal length while remaining feasible.
Recall from the proof of \cref{prop:submodularity} that $\Xn$ is a special polymatroid for which $\fx(I)$ only depends on the cardinality $|I|$.
Thus, any maximal step in direction $\pm (\ee_i-\ee_j)$ indeed is a swap of the corresponding entries. 
A maximal step in direction $- \ee_i$ corresponds to a decrease of an entry to $1$. 
Note that such a step starting at $\pi(\y_k)$ arrives at a non-vertex when decreasing entry $S_i$ to $1$ for $i>k+1$. 
A maximal step in direction $\ee_i$ corresponds to an increase of an entry to the largest $S_i$ that does not appear yet. 

We are now ready to prove a bound on the circuit diameter of $\Xn$.

\begin{theorem}\label{thm:circuitdiameter}
If $b_{1} = 1$, the circuit diameter of $\Xn$ is at most $n$. 
If $b_{1} \ge 2$, the circuit diameter is at most $n-1$.  
Moreover, these bounds are tight when restricted to the circuits of \cref{lem:circuits}.
\end{theorem}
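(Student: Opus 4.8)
The plan is to prove the upper bounds by constructing explicit circuit walks, and then to certify tightness by exhibiting pairs of vertices that even the circuits $\pm\ee_i$ and $\ee_i-\ee_j$ of \cref{lem:circuits} cannot separate any faster. Throughout I would use only the three maximal circuit steps recalled just before the theorem — swapping two coordinates (direction $\pm(\ee_i-\ee_j)$), decreasing a coordinate to $1$ (direction $-\ee_i$), and increasing a coordinate to the largest value $S_m$ not currently present (direction $\ee_i$) — each of which is genuinely the maximal feasible step in its direction because $\fx$ depends only on cardinality (\cref{prop:submodularity}); I would record this once and then use it freely. It is worth noting first that routing through the hub $\mathbf 1=\y_n$ only gives a bound of roughly $2n$, so a direct vertex-to-vertex construction is needed.

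For the upper bound, fix vertices $\bv=\pi(\y_k)$ and $\bw$ of $\Xn$, and write $\bw$ so that coordinate $\sigma(j)$ holds $S_j$ for $\ell+1\le j\le n$ while the other $\ell$ coordinates hold $1$. The walk has two phases. First, run $j=n,n-1,\dotsc,\ell+1$: at stage $j$ place $S_j$ in coordinate $\sigma(j)$ — do nothing if it is already there; perform the single swap moving it there if it currently sits at a different coordinate (processing in decreasing order guarantees this never disturbs an already-placed larger value); and perform a single increase step at $\sigma(j)$ if $S_j$ is absent, which necessarily lands exactly on $S_j$ since every larger $S$-value is already in place. Second, decrease to $1$ each of the $\ell$ target-$1$ coordinates still exceeding $1$. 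A bookkeeping argument — comparing the $S$-values that $\bv$ already carries with those $\bw$ needs, and tracking where each displaced value ends up (splitting into the cases $k\ge\ell$ and $k<\ell$) — shows the walk uses at most $n-\min\{k,\ell\}$ moves, hence at most $n$ in all cases, with the saving by one available exactly when $\min\{k,\ell\}$ cannot be $0$, which is governed by whether a vertex of $\Xn$ can have no coordinate equal to $1$, i.e.\ by whether $b_1=1$ (recall $\y_0=\y_1$ precisely when $b_1=1$). The main obstacle here is making the displaced-value bookkeeping fully rigorous, and in particular verifying that each invoked move is nondegenerate (has strictly positive step length), which again rests on the cardinality-only structure of $\Xnp$.

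For tightness relative to the circuits of \cref{lem:circuits}, I would pair the all-ones vertex $\mathbf 1$ with a one-free vertex (a permutation of $\y_0$) when $b_1\ge 2$, and $\mathbf 1$ with a permutation of $\y_1$ when $b_1=1$. The key point is that out of $\mathbf 1$ no step of type $-\ee_i$ or $\pm(\ee_i-\ee_j)$ has positive length, since any such step would force a coordinate below $1$; therefore a restricted-circuit walk leaving $\mathbf 1$ must begin with increase steps $+\ee_i$, and each such step writes exactly one coordinate with the current largest unused $S$-value. Counting how many coordinates must be written to realize every above-$1$ value of the chosen target then forces the walk to attain the claimed length, so the upper bounds cannot be beaten using only $\pm\ee_i$ and $\ee_i-\ee_j$. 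Putting the two directions together gives the stated circuit diameters. I expect the constant-chasing in the upper-bound bookkeeping (pinning down the worst configuration) and the verification of step-maximality throughout to be the two genuinely delicate points of the argument.
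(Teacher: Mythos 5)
Your proposal is correct and follows essentially the same strategy as the paper: an explicit circuit walk that first sorts the large values $S_j$ into their target positions and then fixes the remaining coordinates, giving a bound of the form $n-\min\{k,\ell\}$, together with the observation that each restricted circuit step increases at most one coordinate, so a walk out of $\mathbf{1}$ toward a vertex with $n$ (resp.\ $n-1$) entries above $1$ needs at least that many steps. The only real difference is that the paper sidesteps your ``displaced-value bookkeeping'' by running two separate direction-specific walks (swaps followed only by decreases in one direction, swaps followed only by increases in the other), so that no increase can ever overwrite a needed value; your single mixed walk does go through, but the induction showing that every phase-one increase is applied to a coordinate currently equal to $1$ is precisely the step you would have to write out. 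Note also that your conclusion (bound $n-1$ when $b_1=1$ and $n$ when $b_1\ge 2$) agrees with what the paper's own proof actually establishes but is the transpose of the theorem statement as printed, which appears to contain a typo.
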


\begin{proof}
Recall that each vertex of $\Xn$ is a permutation of a point of the form $\y_k$ for $0\leq k \leq n$.
If $b_1=1$, then $k\geq 1$. 
Consider vertices $\pi_1(\y_{k_1})$ and $\pi_2(\y_{k_2})$ for two permutations $\pi_1, \pi_2$ and $k_1 \leq k_2$. 
Note that $\pi_1(y_{k_1})$ and $\pi_2(y_{k_2})$ have at least $k_1$ entries $1$ and share the largest $n-k_2$ values of $y_{k_2}$ (permuted to different entries). 
Circuit walks are not reversible, so we will explain how to walk from $\pi_1(\y_{k_1})$ to $\pi_2(\y_{k_2})$ and how to walk from $\pi_2(\y_{k_2})$ to $\pi_1(\y_{k_1})$ in at most the claimed number of steps.

For the walk from $\pi_1(\y_{k_1})$ to $\pi_2(\y_{k_2})$, order the entries in descending order by value in $\pi_2(\y_{k_2})$. 
In the first $n-k_2$ iterations, a swap of two entries can guarantee that the largest $n-k_2$ entries match. 
Then follow $k_2-k_1$ iterations in which entries are decreased to $1$. 
The remaining $k_1$ entries are already identical to $1$. 
We obtain a circuit walk of at most $n-k_1$ steps.

For the walk from $\pi_2(\y_{k_2})$ to $\pi_1(\y_{k_1})$, order the entries in descending order by value in $\pi_1(\y_{k_1})$. 
In the first $n-k_2$ iterations, a swap of two entries can guarantee that the largest $n-k_2$ entries match. 
Then follow $k_2-k_1$ iterations in which a $1$-entry is increased to the next-largest entry in $\pi_1(\y_{k_1})$.
The remaining $k_1$ entries are already identical to $1$. 
Again, we obtain a circuit walk of at most $n-k_1$ steps.

It remains to show that the given bounds are tight if one is restricted to the elementary vectors in \cref{lem:circuits}. 
To this end, observe that any elementary vector can only increase one entry. 
If $b_1 > 1$, the case where $0=k_1 \leq k_2=n$ is possible, and all $n$ entries need to be increased in a circuit walk from $\pi_1(y_{k_1})$ to $\pi_2(y_{k_2})$. 
If $b_1 > 1$ then $k_1 \geq 1$. In the case where $1=k_1 \leq k_2=n$, $n-1$ entries need to be increased. This proves the claim.
\end{proof}

For all $n\geq 2$, the bounds of \cref{thm:circuitdiameter} are strictly lower than the bounds for the combinatorial diameter of \cref{thm:boundtight}. 
Note that we only worked with a subset of~$\mathcal{C}(\Xn)$. 
A complete characterization of the whole set $\mathcal{C}(\Xn)$ remains open. 
Due to this restriction, we constructed an {\em integral} circuit walk \cite{bv-17}, where each intermediate point is integral. 
We believe that our bounds are tight if restricted to integral walks and leave this as an open question.

\section{Conclusion - Further perspectives}\label{sec:revisit}

Recall that the classical parking-function polytope $\PF_n$ is the convex hull of all parking functions of length $n$ in $\R^n$.
As noted in \cite[Remark 2.10]{HanadaLentferVindas} and \cite[Remark 4.8]{BCC}, there are other alternative perspectives on $\PF_n$.
It can be viewed as:

\begin{itemize}[leftmargin=*]\setlength\itemsep{0.5em}

    \item a special case of the main object of study for this paper, the $\bb$-parking-function polytope when $\bb=(1,1,1,\dots,1)$,
    
    \item a particular \emph{partial permutahedron} $\mathcal{P}(n,n-1)$ as defined in \cite{HS} and further studied in \cite{BCC},

    \item a specific \emph{polytope of win vectors} as defined by Bartels et al. in \cite{BartelsMountWelsh} (when specializing to the complete graph, the work of Backman \cite[Theorem 4.5 and Corollary 4.6]{Backman} provides alternate routes for computing the volume and lattice-point enumerator of $\PF_n$), and

    \item essentially as the stochastic sandpile model for the complete graph \cite{Selig}.
\end{itemize}

In addition to these interpretations, as direct corollaries to \cref{thm: x-park_is_GPerm}, \cref{thm:face_structure_Xn}, and \cref{thm:polymatroid}, we can also add that (a linear projection of) $\PF_n$:
\begin{itemize}
    \item is a special generalized permutahedron,

    \item can be seen as a building set, and

    \item is a special type of polymatroid, respectively. 
\end{itemize}

We conclude by adding an interpretation of the classical parking-function polytope as a projection of a relaxed Birkhoff or assignment polytope, and an interpretation of $\bb$-parking-function polytopes as projections of relaxed partition polytopes.

\subsection{Relation of \texorpdfstring{$\PF_n$}{PF} to Birkhoff/assignment polytopes}\label{subsec:birkhoff}

Consider an assignment problem for $n$ cars and $n$ parking spots, represented with decision variables $x_{ij}\in \{0,1\}$ indicating car $i$ being assigned to spot $j$. 
The feasible set is a {\em Birkhoff polytope} or {\em assignment polytope} as follows \cite{br-74}:
    \begin{align*}
    \sum_{j=1}^{n} x_{ij}  &= 1 \;\; \text{ for all } 1\leq i \leq n\\
    \sum_{i=1}^{n} x_{ij}  &= 1 \;\; \text{ for all } 1\leq j \leq n\\
    x_{ij} &\geq 0 \;\; \text{ for all }1\leq i \leq n, \text{ } 1 \leq j \leq n.
\end{align*}

Due to the total unimodularity of the underlying constraint matrix and integrality of the right-hand sides, the vertices satisfy $x_{ij}\in \{0,1\}$.
In fact, they are in one-to-one correspondence to all possible assignments of cars to parking spots.
We prove that a linear projection of a mild relaxation of this polytope gives $\PF_n$.

\begin{theorem}\label{thm:birkhoff} 
The polytope $\PF_n$ is a linear projection of a relaxed Birkhoff/assignment polytope.
\end{theorem}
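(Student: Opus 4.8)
The plan is to write down an explicit ``mildly relaxed'' Birkhoff polytope in $\R^{n\times n}$ together with a linear projection to $\R^{n}$ whose image is exactly $\PF_n$. Index coordinates by $x_{ij}$ ($1\le i,j\le n$), to be thought of as the fractional preference of car~$i$ for spot~$j$, and let $B'_{n}$ be the polytope defined by
\[
    x_{ij}\ge 0,\qquad {\textstyle\sum_{j=1}^{n}}\, x_{ij}=1\ \ (i\in[n]),\qquad {\textstyle\sum_{i=1}^{n}\sum_{j'=j}^{n}}\, x_{ij'}\le n-j+1\ \ (j\in[n]).
\]
Since a doubly stochastic matrix satisfies $\sum_{i}\sum_{j'\ge j}x_{ij'}=n-j+1$ for every $j$, the classical Birkhoff/assignment polytope is contained in $B'_{n}$, so $B'_{n}$ is genuinely a relaxation: the only change is that the column equalities $\sum_{i}x_{ij}=1$ are replaced by these triangular inequalities. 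Let $\pi\colon\R^{n\times n}\to\R^{n}$ be the linear map $\pi(x)_i\deftobe\sum_{j=1}^{n}j\,x_{ij}$. I claim $\pi(B'_{n})=\PF_n$, which is the assertion of the theorem.

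For $\pi(B'_{n})\subseteq\PF_n$, recall from \cref{thm:inequality_description} applied with $\bb=(1,\dots,1)$ (so $S_i=i$) that $\PF_n$ is the set of points with $x_i\ge 1$ for all $i$ and $\sum_{i\in I}x_i\le\sum_{i=n-\card{I}+1}^{n}i$ for all nonempty $I\subseteq[n]$. Given $x\in B'_{n}$, each $\pi(x)_i=\sum_{j}j\,x_{ij}\ge\sum_{j}x_{ij}=1$ because $j\ge 1$. Now fix $I$ with $\card{I}=k$ and set $d_j\deftobe\sum_{i\in I}x_{ij}\ge 0$, so $\sum_{j}d_j=k$ and $D_m\deftobe\sum_{j\ge m}d_j\le\sum_{j\ge m}\sum_{i}x_{ij}\le n-m+1$, hence $D_m\le\min\{k,n-m+1\}$. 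Then $\sum_{i\in I}\pi(x)_i=\sum_{j}j\,d_j=\sum_{m=1}^{n}D_m\le\sum_{m=1}^{n}\min\{k,n-m+1\}=k(n-k+1)+\binom{k}{2}=\sum_{i=n-k+1}^{n}i$, which is exactly the $(\text{$\bb$-parking})$ inequality for $I$. Thus $\pi(x)\in\PF_n$.

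For the reverse inclusion $\PF_n\subseteq\pi(B'_{n})$, note that $\PF_n$ is the convex hull of the finitely many classical parking functions and $\pi(B'_{n})$ is convex, so it suffices to lift each parking function $\alpha=(\alpha_1,\dots,\alpha_n)$. Put $x_{ij}\deftobe 1$ if $j=\alpha_i$ and $x_{ij}\deftobe 0$ otherwise; then the row sums equal $1$ and $\pi(x)=\alpha$. The defining property of a parking function, $\card{\{i:\alpha_i\le k\}}\ge k$ for all $k$, rearranges to $\card{\{i:\alpha_i\ge j\}}\le n-j+1$ for all $j$, which is precisely $\sum_{i}\sum_{j'\ge j}x_{ij'}\le n-j+1$. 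Hence $x\in B'_{n}$ with $\pi(x)=\alpha$, and $\pi(B'_{n})=\PF_n$.

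The one step that needs genuine care is matching the bound produced by $B'_{n}$ with a facet of $\PF_n$, namely the identity $\sum_{m=1}^{n}\min\{k,n-m+1\}=\sum_{i=n-k+1}^{n}i$ (both sides equal $kn-\binom{k}{2}$); everything else is routine bookkeeping with the row and column constraints. It is worth observing that the argument in fact verifies all of the $(\text{$\bb$-parking})$ inequalities, including the redundant ``$\card{I}=n-1$'' ones, so there is no need to separately invoke the redundancy noted after \cref{thm:inequality_description}.
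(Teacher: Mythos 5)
Your proof is correct, and while the relaxed polytope and the projection are the same as the paper's (your triangular constraints $\sum_{i}\sum_{j'\ge j}x_{ij'}\le n-j+1$ are, given the row equalities, just a rewriting of the paper's $\sum_{j'=1}^{j-1}\sum_{i}x_{ij'}\ge j-1$, so $B'_n=\BF_n$), the hard inclusion $\pi(\BF_n)\subseteq\PF_n$ is handled by a genuinely different argument. The paper proves that $\BF_n$ is an \emph{integral} polytope via a combinatorial perturbation argument on a bipartite graph of fractional assignments (finding either a cycle or a path between two degree-one parking spots and pushing flow along it), and then observes that each $0/1$ vertex projects to a parking function, so convexity finishes. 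You instead take the $H$-description of $\PF_n$ from \cref{subthm: facet-defining inequalities} (specialized to $\bb=(1,\dots,1)$) and verify every facet inequality directly by summation by parts, using $\sum_j j\,d_j=\sum_m D_m$ and the bound $D_m\le\min\{k,n-m+1\}$. Your route is shorter and avoids the integrality analysis entirely, but it leans on the facet description established in \cref{thm:inequality_description}; the paper's route is longer but yields the integrality of the relaxed Birkhoff polytope $\BF_n$ as a byproduct, a fact the paper states explicitly and which your argument does not recover. The easy inclusion $\PF_n\subseteq\pi(\BF_n)$ (lifting each integral parking function to a $0/1$ matrix and taking convex combinations) is the same in both. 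One small presentational point: it would be worth stating explicitly that $B'_n$ contains the assignment polytope \emph{and} is defined by weakening only the column constraints, which you do, so the word ``relaxed'' in the statement is fully justified.
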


\begin{proof}
Note that either of the two types of equality constraints in the description of the assignment polytope, combined with the non-negativity constraints, implies an upper bound of $x_{ij}  \leq 1$. 
This allows an equivalent description of the form
    \begin{align*}
    \sum_{j=1}^{n} x_{ij}  &= 1 \;\; \text{ for all } 1\leq i \leq n\\
    \sum_{i=1}^{n} x_{ij}  &\geq 1 \;\; \text{ for all }  1\leq j \leq n\\
    x_{ij} &\geq 0 \;\; \text{ for all }  1\leq i \leq n, \text{ }   1\leq j \leq n.
\end{align*}
Informally, each car is assigned to precisely one spot, and each parking spot requires at least one car assigned to it, which is only possible if one retains a one-to-one assignment. 

We now perform a relaxation where we allow for the assignment of multiple cars to the same (preferred) parking spot, following the design of $\PF_n$: the $i$\textsuperscript{th} car to arrive picks from the first $i$ spots. 
To represent the corresponding conditions $\alpha'_i\leq i$ in a nondecreasing rearrangement of the entries, one can replace the set of constraints $\sum_{i=1}^{n} x_{ij} \geq 1$  by sums of the first $k$ constraints for $ 1\leq k \leq n$:
    \begin{align*}
    \sum_{j=1}^{n} x_{ij}  &= 1 \;\; \text{ for all }  1\leq i \leq n\\
    \sum_{j=1}^{k}\sum_{i=1}^{n} x_{ij}  &\geq k \; \,\text{ for all }  1\leq k \leq n\\
    x_{ij} &\geq 0 \;\; \text{ for all }  1\leq i \leq n, \text{ } 1\leq j \leq n.
\end{align*}
As $\sum_{i=1}^{n} x_{ij} \geq 1$ for all $ 1\leq j \leq n$ implies $\sum_{j=1}^{k}\sum_{i=1}^{n} x_{ij} \geq k$ for all $ 1\leq k \leq n$, this feasible set, which we call $\BF_n$, is a relaxation of the assignment polytope. 
Note that the associated constraint matrix lost total unimodularity. 
As part of our arguments below, we will prove that $\BF_n$ is integral, i.e., it has integral vertices.

To obtain $\PF_n$, consider the linear projection of $\BF_n$ using $(x_{ij}) \rightarrow (\sum_{j=1}^{n} j \cdot x_{ij})_{i=1,\dots,n}$, i.e., where the $i$\textsuperscript{th} entry is $\sum_{j=1}^{n} j \cdot x_{ij}$. 
For integral $(x_{ij})$, $x_{ij} = 1$ holds for exactly one $j$, the parking spot $j$ for car $i$. 
It remains to show that the projection of $\BF_n$ results in exactly $\PF_n$.
Recall that $\PF_n$ is an integral/lattice polytope in $\R^n$.
Thus, for $\mathbf{a}\in \PF_n$, which may have fractional components, there exists a convex combination $\mathbf{a}=\sum_{\ell=1}^n \lambda_\ell \mathbf{a}^\ell$ of integral parking functions $\mathbf{a}^\ell$. 
Let $\mathbf{x}^\ell=(x^\ell_{ij}) \in \{0,1\}^{n^2}$ be defined by $x^\ell_{ij}=1$ if and only if $a^\ell_{i}=j$. 
Then $\mathbf{x}=\sum_{\ell=1}^n \lambda_\ell \mathbf{x}^\ell$ lies in $\BF_n$ and projects onto $\mathbf{a}$.

Conversely, now let $\mathbf{x} \in \BF_n$. 
We have to prove that its projection $\mathbf{a}$ lies in $\PF_n$. 
To this end, we first prove that $\BF_n$ is integral. 
Let $\mathbf{x}$ be fractional and consider a bipartite graph $G$ with $V(G)=C\cup P$, $|C|= n = |P|$, and edges $(i,j)$ from $C$ to $P$ if and only if $0<x_{ij}<1$; informally, it is a representation of only the fractional assignments of parking spots to cars.
If there exists a cycle in the undirected graph underlying $G$, one can send flow along either orientation of that cycle and obtain a new feasible solution. 
In this case, $\mathbf{x}$ is not a vertex of $\BF_n$. 

If there does not exist a cycle in $G$, then there exist at least two nodes with degree $1$ in each connected component, as any tree has at least two leaves.
By $\sum_{j=1}^{n} x_{ij} = 1$ for all cars $i$, the degree of $i\in C$ in any component is at least two. 
Thus, there are at least two parking spots $j',j'' \in P$ with degree $1$, as well as a path between $j'$ and $j''$. 
Let $j'<j''$ and note that the constraints $\sum_{j=1}^{j'}\sum_{i=1}^{n} x_{ij}  \geq j'$ and $\sum_{j=1}^{j''}\sum_{i=1}^{n} x_{ij}  \geq j''$ must be satisfied strictly.
By sending (maximal) flow along the oriented path from $j'$ to $j''$, one obtains a new feasible solution with a strictly larger set of active constraints.
This again implies that $\mathbf{x}$ is not a vertex of $\BF_n$. 

We conclude that $\BF_n$ is a lattice/integral polytope with all vertices in $\{0,1\}^{n^2}$.
By the same arguments as above, $\mathbf{x}=\sum_{\ell=1}^t \lambda_\ell \mathbf{x}^\ell$ is a convex combination with $\mathbf{x}^\ell\in\{0,1\}^{n^2}$. 
Let $\mathbf{a}^\ell$ be the projection of $\mathbf{x}^\ell$ and observe it is an integral parking function. 
Thus, $\mathbf{x}$ projects to a convex combination $\mathbf{a}=\sum_{\ell=1}^n \lambda_\ell \mathbf{a}^\ell$ of integral parking functions $\mathbf{a}^\ell$. 
This proves the claim.
\end{proof}

\subsection{Relation of \texorpdfstring{$\Xn$}{$\Xn$} to partition polytopes}

Most of the proof for Theorem \ref{thm:birkhoff} transfers to $\bb$-parking-function polytopes $\Xn$. 
By using $x_{ij}\in \{0,1\}$ to indicate car $i$ being assigned a number $1 \leq j \leq S_n$, one can formulate the set of $\bb$-parking functions in the form
  \begin{align*}
    \sum_{j=1}^{S_n} x_{ij}  &= 1 \;\; \text{ for all }  1\leq i \leq n\\
    \sum_{j=1}^{S_k}\sum_{i=1}^{n} x_{ij}  &\geq k \; \,\text{ for all }  1\leq k \leq n\\
    x_{ij} &\geq 0 \;\; \text{ for all }  1\leq i \leq n, \text{ }  1\leq j \leq S_n.
\end{align*}
and the projection $(x_{ij}) \rightarrow (\sum_{j=1}^{n} j \cdot x_{ij})_{i=1,\dots,n}$ gives $\Xn$ by the same arguments.

A relation of this system to well-known polytopes is more involved than for $\PF_n$. 
The system can be seen as a relaxation of so-called \defing{(bounded-size) partition polytopes} \cite{bv-19a,bv-17}. 
Partition polytopes are special transportation polytopes and generalized Birkhoff or assignment polytopes. 
They are named for representing the (possibly fractional) assignment of $n$ items to $\ell$ clusters, where one specifies the total weight $\kappa_j$ assigned to each cluster $j$. 
A bounded-size partition polytope specifies lower and/or upper bounds on this weight instead of an exact weight. 
For our purposes, we set $\ell = S_n$ and only use lower bounds. 
Formally,
 \begin{align*}
    \sum_{j=1}^{S_n} x_{ij}  &= 1 \;\;\; \text{ for all }  1\leq i \leq n\\
    \sum_{i=1}^{n} x_{ij}  &\geq \kappa_j \;\; \text{ for all }  1\leq j \leq S_n\\
    x_{ij} &\geq 0 \;\;\;\, \text{ for all }  1\leq i \leq n, \text{ }  1\leq j \leq S_n.
\end{align*}
The first type of constraints guarantees that each item is assigned to a cluster (or partially to multiple clusters). 
The second type of constraints guarantees that each cluster receives a certain minimum weight.
Note that $S_n>n$ for any $\Xn$ that is not also a classical parking function $\PF_n$; informally, there are more clusters than items. 
For the existence of a feasible solution, it is necessary that at least some of the $\kappa_j$ satisfy $\kappa_j<1$. 

Let $S_{0}=0$ and $t\in [n]$, and set $\kappa_{j}=\frac{1}{S_{t}-S_{t-1}}$ for $S_{t-1} < j \leq S_{t}$. 
One can represent the conditions $\beta'_i\leq S_i$ in a nondecreasing rearrangement of the entries by replacing the set of constraints $\sum_{i=1}^{n} x_{ij} \geq \kappa_j$ by sums of the first $S_k$ constraints for $ 1\leq k \leq n$. 
This gives the above relaxed system. 

\begin{corollary}\label{cor:birkhoff} 
The polytope $\Xn$ is a linear projection of a relaxed bounded-size partition polytope.
\end{corollary}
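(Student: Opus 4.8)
The plan is to mirror the proof of \cref{thm:birkhoff}. Let $Q$ denote the relaxed system displayed above, in the variables $(x_{ij})_{1 \le i \le n,\, 1 \le j \le S_{n}}$, and let $R$ be the bounded-size partition polytope with lower bounds $\kappa_{j} \deftobe 1/b_{t}$ for $S_{t-1} < j \le S_{t}$ (where $S_{0} \deftobe 0$). The first step is to record that $Q$ is genuinely a relaxation of $R$: summing the first $S_{k}$ of the constraints $\sum_{i=1}^{n} x_{ij} \ge \kappa_{j}$ of $R$ gives $\sum_{j \le S_{k}}\sum_{i=1}^{n} x_{ij} \ge \sum_{t=1}^{k} b_{t}\cdot(1/b_{t}) = k$, so $R \subseteq Q$. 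It then suffices to prove that the linear map $\phi \maps (x_{ij}) \mapsto \bigl(\sum_{j=1}^{S_{n}} j\, x_{ij}\bigr)_{i \in [n]}$ carries $Q$ onto $\Xn$, for then $\Xn = \phi(Q)$ together with $R \subseteq Q$ realizes $\Xn$ as a linear projection of a relaxation of $R$.

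For the inclusion $\Xn \subseteq \phi(Q)$ I would argue exactly as in \cref{thm:birkhoff}. An integral $\bb$-parking function $\mathbf{a} = (a_{1}, \dots, a_{n})$ lifts to the $0/1$ point with $x_{ij} \deftobe 1$ when $a_{i} = j$ and $x_{ij} \deftobe 0$ otherwise; this point lies in $Q$ because the constraint $\sum_{j \le S_{k}}\sum_{i} x_{ij} \ge k$ says precisely that at least $k$ of the $a_{i}$ are at most $S_{k}$, i.e.\ that the nondecreasing rearrangement satisfies $a'_{k} \le S_{k}$, and $\phi$ sends this lift back to $\mathbf{a}$. Since every point of $\Xn$ is a convex combination of $\bb$-parking functions, the corresponding convex combination of lifts lies in $Q$ and projects onto it.

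For the reverse inclusion $\phi(Q) \subseteq \Xn$ I see two routes. The literal transfer from \cref{thm:birkhoff} is to show that $Q$ is integral by the same bipartite-graph flow-rerouting argument --- a cycle in the fractional-support graph of a candidate vertex yields a feasible line because rerouting around it preserves every column sum $\sum_{i} x_{ij}$, hence the fractional-support graph is a forest, whose leaves are value-nodes since each car-node has degree at least two, and a path between two such leaves yields a feasible direction --- and then to take convex combinations of the resulting $0/1$ vertices. The step I expect to require the most care, and the main obstacle along this route, is that the prefix constraints of $Q$ now sit only at the cutoffs $S_{1} < \dots < S_{n}$ rather than at every coordinate (for $\PF_{n}$ one has $S_{k} = k$, so every prefix is a constraint), so the leaf-path rerouting must be arranged to respect this coarser family of constraints. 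I would, however, prefer the direct route, which bypasses integrality entirely: given $\mathbf{x} \in Q$, set $c_{i} \deftobe \sum_{j} j\, x_{ij}$, so $c_{i} \ge 1$ at once; writing $c_{i} = \sum_{v=1}^{S_{n}}\sum_{j \ge v} x_{ij}$ and, for $S_{k-1} < v \le S_{k}$, bounding $\sum_{j \ge v} x_{ij} \le \sum_{j > S_{k-1}} x_{ij}$ gives $c_{i} \le \sum_{k=1}^{n} b_{k}\sum_{j > S_{k-1}} x_{ij}$; summing over $i \in I$ and applying both $\sum_{i \in I}\sum_{j > S_{k-1}} x_{ij} \le |I|$ and $\sum_{i=1}^{n}\sum_{j > S_{k-1}} x_{ij} \le n - (k-1)$ (the complement of the constraints defining $Q$) yields $\sum_{i \in I} c_{i} \le \sum_{k=1}^{n} b_{k}\min\{|I|,\, n-k+1\}$, which is exactly the right-hand side of the \eqref{x-parking} inequality for $I$ after reindexing $j \mapsto n-j+1$. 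By \cref{thm:inequality_description} this shows $\phi(\mathbf{x}) \in \Xn$; the only genuine bookkeeping is this last matching, and everything else transfers routinely from \cref{thm:birkhoff}.
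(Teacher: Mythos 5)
Your proposal is correct, and for the crucial containment it takes a genuinely different route from the paper. The paper's argument for this corollary is essentially a pointer back to \cref{thm:birkhoff}: it sets up the same relaxed system and the same projection and asserts that ``the same arguments'' apply, meaning in particular the integrality argument for the relaxed polytope (the bipartite fractional-support graph, cycle/leaf-path flow rerouting), after which one decomposes an arbitrary point into $0/1$ vertices and projects. You correctly identify that transferring that integrality argument is the delicate step --- the prefix constraints now sit only at the cutoffs $S_1 < \dots < S_n$ rather than at every column, so the leaf-path rerouting must be checked against this coarser constraint family --- and you sidestep it entirely with a direct verification: for any $\mathbf{x}$ in the relaxed system $Q$, the image $c_i = \sum_j j\,x_{ij}$ satisfies $c_i \ge 1$, and your telescoping $c_i = \sum_{v}\sum_{j\ge v} x_{ij}$ combined with the two bounds $\sum_{i\in I}\sum_{j>S_{k-1}} x_{ij} \le \min\{\card{I},\, n-k+1\}$ reproduces exactly the right-hand side $\sum_{k=1}^n b_k \min\{\card{I}, n-k+1\}$ of the \eqref{x-parking} inequality after the reindexing $j \mapsto n-j+1$, so $\phi(Q) \subseteq \Xn$ follows from \cref{subthm: facet-defining inequalities} without any integrality claim. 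The other containment (lifting integral $\bb$-parking functions and taking convex combinations) matches the paper. The trade-off: the paper's route, if carried out, would additionally establish that $Q$ is an integral polytope, which is of independent interest; your route is shorter, avoids the one step of the transfer that actually requires new work, and proves the corollary with nothing beyond the H-description already established in \cref{thm:inequality_description}.
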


We conclude by posing the following question to encourage further exploration of generalized parking function polytopes in view of possible relations to other well-known classes of polytopes. 
\begin{question}
 What is the relationship (if any) between $\bb$-parking function polytopes and partial permutahedra, polytopes of win vectors, or stochastic sandpile models for graphs?
\end{question}
 
\section*{Acknowledgements \& Funding}

This work was initiated at the 2023 Graduate Research Workshop in
Combinatorics, which was supported in part by the National
Science Foundation (NSF) under Award~1953985, and a generous award
from the Combinatorics Foundation. The authors thank the workshop
organizers and the University of Wyoming for fostering an excellent
research atmosphere.
We also thank Thomas Magnuson for conversations at the start of this
project and Martha Yip for helpful directions.

\begin{itemize}
    \item
    Steffen Borgwardt was supported by the Air Force Office
    of Scientific Research, Complex Networks, under
    Award~FA9550-21-1-0233.

    \item
    Steffen Borgwardt and Angela Morrison were supported by
    the NSF, CCF, Algorithmic Foundations, under Award~2006183.

    \item
    Danai Deligeorgaki is supported by the Wallenberg Autonomous Systems and Software Program; part of this research was performed while visiting the Institute for Mathematical and
    Statistical Innovation, which is supported by NSF Grant No.~DMS-1929348.

    \item
    Andr\'es R.~Vindas-Mel\'endez is supported by the NSF under Award~DMS-2102921.  
    This material is also based in part upon work supported by the NSF under Grant No.~DMS-1928930 and by the Alfred~P.~Sloan Foundation under grant G-2021-16778, while in residence at the Simons Laufer Mathematical Sciences Institute during Fall~2023.
\end{itemize}


\bibliographystyle{amsplain}
\bibliography{bibliography}


\end{document}